\documentclass{amsart}

\usepackage{amsrefs}

\newtheorem{theorem}{Theorem}[section]
\newtheorem{proping}{Proposition}[section]
\newtheorem{coring}{Corollary}[section]
\newtheorem{lemma}[theorem]{Lemma}

\newtheorem{definition}[theorem]{Definition}
\newtheorem{remark}[theorem]{Remark}

\begin{document}                                                 
    \title[Solutions to the singular $\sigma_2-$Yamabe problem]{Solutions to the singular $\sigma_2-$Yamabe problem with isolated singularities}
    \author[Almir Silva Santos]{Almir Silva Santos}
    \address{Universidade Federal de Sergipe\\
Centro de Ci\^encias Exatas e Tecnologia\\
Departamento de Matem\'atica\\
\;Av. Marechal Rondon s/n\\
S\~ao Crist\'ov\~ao-SE, 49100-00\\}
%    \curraddr{...}                                   
    \email{arss@ufs.br}
%    \urladdr{...}                                    
%    \dedicatory{...}                                 
%    \date{...}                                       
    \thanks{The author would like to thank the support provided by CNPq-Brazil and Institut Fourier-France where this work was started during his Post-doct. He is specially grateful to Fernanco C. Marques for his constant encouragement.}
%    \translator{...}                                 
    \keywords{singular $\sigma_k-$Yamabe problem, constant $\sigma_k-$curvature, Weyl tensor, gluing method.}
    \subjclass{53C21; 53A30}
    \begin{abstract}
    Given $(M,g_0)$ a closed Riemannian manifold and a nonempty closed subset $X$ in $M$, the singular $\sigma_k-$Yamabe problem asks for a complete metric $g$ on $M\backslash X$ conformal to $g_0$ with constant $\sigma_k-$curvature. The $\sigma_k-$curvature is defined as the $k-$th elementary symmetric function of the eigenvalues of the Schouten tensor of a Riemannian metric. The main goal of this paper is to find solutions to the singular $\sigma_2-$Yamabe problem with isolated singularities in any compact non-degenerate manifold such that the Weyl tensor vanishing to sufficiently high order at the singular point. We will use perturbation techniques and gluing methods.
    \end{abstract}                
    \maketitle

    %\tableofcontents
    
\section{Introduction}

Since the complete resolution of the Yamabe problem by Yamabe \cite{MR0125546}, Trudinger \cite{MR0240748}, Aubin \cite{MR0431287} and Schoen \cite{MR788292}, much attention has been given to the study of conformal geometry. To understand the problem we are interested in this work, first lets recall some background definition from Riemannian Geometry. 
Given a Riemannian manifold $(M,g)$, there exists an orthogonal decomposition of the curvature tensor $Rm_g$ which is given by
$$Rm_g=W_g+A_g\odot g,$$
where $\odot$ is the Kulkarni-Nomizu produt, $A_g$ is the Shouten Tensor defined as
\begin{equation}\label{eq056}
 A_g=\frac{1}{n-2}\left(Ric_g-\frac{1}{2(n-1)}R_gg\right),
\end{equation}
$Ric_g$ and $R_g$ are respectively the Ricci tensor and the scalar curvature of the metric $g$, see \cite{MR2274812} for instance. Since the Weyl curvature tensor $W_g$ is conformally invariant in the sense that $W_{e^fg}=e^fW_g$, then to understant the conformal class of the metric $g$ it is natural to study the Schouten tensor $A_g$. For $k\in\{1,\ldots,n\}$, the $\sigma_k-$curvature is defined as
$$\sigma_k(A_g):=\sum_{1\leq i_1<\cdots< i_k\leq n}\lambda_{i_1}\cdot\ldots\cdot\lambda_{i_k},$$
that is, $\sigma_k(A_g)$ is the $k-$th elementary symmetric function of the eigenvalue $(\lambda_1,$ $\ldots,$ $\lambda_n)$ of $A_g$.

The $\sigma_k-$Yamabe problem asks for a conformal metric in a given closed Riemannian manifold $(M,g)$ with $\sigma_k-$curvature constant. Note that since $\sigma_1(A_g)=\frac{1}{2(n-1)}R_g$, then the case $k=1$ is the classical Yamabe problem. The $\sigma_k-$Yamabe problem has been extensively studied in the past years. We direct the reader to the papers \cite{MR1945280}, \cite{MR1923964}, \cite{MR1978409}, \cite{MR2373147}, \cite{MR1988895}, \cite{MR2362323} and the references contained therein.

It is then natural to ask whether every noncompact Riemannian manifold is con\nolinebreak formally equivalent to a complete manifold with constant $\sigma_k-$curvature. When the noncompact manifold has a simple structure at infinity, this question may be studied by solving the singular $\sigma_k-$Yamabe problem: Given $(M,g_0)$ a closed Rie\nolinebreak mannian manifold and a nonempty closed set $X$ in $M$, find a complete metric $g$ on $M\backslash X$ conformal to $g_0$ with constant $\sigma_k-$curvature. For $k=1$ this problem has been extensively studied in recent years, and many existence results as well as obstructions to existence are known. See \cite{MR2639545} and the references contained therein (See also \cite{BPS}).

The equation $\sigma_k(A_g)=$ constant is always elliptic for $k=1$, while for $k\geq 2$ we need some additional hypothesis, for example, a sufficient condition for this is that $g$ is $k-$admissible. By definition a metric $g$ on $M$ is said to be $k-$admissible if it belongs to the $k-$th positive cone $\Gamma_k^+$, this means that 
$$g\in\Gamma_k^+\Longleftrightarrow\sigma_1(A_g),\ldots,\sigma_k(A_g)>0.$$

We will produce conformal complete metric in a given closed manifold $(M,g_0)$ with nonremovable isolated singularity with positive constant $\sigma_2-$curvature. Before write precisely our main result lets remember some well known facts about the $\sigma_k-$curvature.

For $4\leq 2k<n$ it was proved in \cite{MR2040327} and \cite{MR2169873} that if $\mathbb S^n\backslash X$ admits a complete Riemannian metric $g$ conformal to the round metric $g_{\mathbb S^n}$ with $\sigma_1(A_g)\geq c>0$ and $\sigma_2(A_g),\ldots,\sigma_k(A_g)\geq 0$, then the Hausdorff dimension of $X$ is less then or equal to $(n-2k)/2$. On the other hand, using the estimate, obtained in \cite{MR1938739}, namely,
$$Ric_g\geq \frac{(2k-n)(n-1)}{(k-1)}\left(\begin{array}{c}
                                            n\\
                                            k
                                           \end{array}
\right)^{-1/k}\sigma_k^{1/k}(A_g)g,$$
for locally conformally flat manifold and the Bonnet-Myers's theorem, Gonzalez \cite{MR2169873} observed that there is no singular metric in $\mathbb S^n$ with positive constant $\sigma_k-$curvature and $n<2k$. In \cite{MR2165306} the authors proved that there is no complete metric with constant positive $\sigma_{n/2}-$curvature, conformally related with the canonical metric in $\mathbb R^n\backslash\{0\}$, and with radial conformal factor. For $2\leq k\leq n$, Han, Li and Teixeira \cite{MR2737708} proved, as in the case $k=1$ (see \cite{MR982351}, \cite{MR1666838} and \cite{MR2393072}), that any complete metric in $\mathbb S^n$ with nonremovable isolated singularity, positive constant $\sigma_k-$curvature and conformal to the canonincal one is asymptotic to some rotationally symmetric metric near the singular set. Although these results are for locally conformally flat manifold, they motivate us to consider the singular $\sigma_k-$Yamabe problem with $2\leq 2k<n$.

In \cite{MR2393072}, Marques proved that given a closed manifold $(M,g)$, not necessarilly locally conformally flat, with dimension $3\leq n\leq 5$ then every complete metric with positive constant scalar curvature and with nonremovable isolated singularity is asymptotic to a radial one near the singular set. It should be an interesting question ask if there is an analogous result for singular metrics with positive constant $\sigma_k-$curvature. Another interested problem is related with the Hausdorff dimension estimate $(n-2k)/2$. For $k=1$ this estimate is sharp. In \cite{MR1712628}, the authors constructed metrics of constant positive scalar curvature that are singular at any given disjoint union of smooth submanifolds of $\mathbb S^n$ of dimensions $0<k_i\leq(n-2)/2$. In fact, a model to the positive singular Yamabe problem is the manifold $\mathbb S^{n-l-1}\times\mathbb H^{l+1}$ which is conformal to $\mathbb S^n\backslash\mathbb S^l$ and has positive constant scalar curvature equal to $(n-2l-2)(n-1)$ for all $l<(n-2)/2$, see \cite{BPS} and \cite{MR2169873}. Up to our knowledge, it is not known if the correspond estimate for $k>1$ is sharp. Gonzalez \cite{MR2169873} has showed that 
$$l_k:=\sup\{l\geq 0;P_1(l),\ldots,P_k(l)>0\}\rightarrow\frac{n-2}{2}-O(\sqrt{n}),\;\;\mbox{ as }\;\;n\rightarrow\infty,$$
where $P_r(l)$ is the $\sigma_r-$curvature of $\mathbb S^{n-l-1}\times\mathbb H^{l+1}$. See \cite{MR3237067} for more details about this subject.

Only few results are known about the singular $\sigma_k-$Yamabe problem. Using a similar method like Mazzeo and Pacard \cite{MR1712628} used to construct singular metrics in the sphere $\mathbb S^n$ with positive constant scalar curvature, Mazzieri and Ndiaye \cite{MN} proved the following existence result:
\begin{theorem}[Mazzieri-Ndiaye \cite{MN}]
 Suposse $\Lambda\subset\mathbb S^n$ is a finite set which is symmetrically balanced, that is, there exists an orthogonal transformation $T\in O(n+1)$ of $\mathbb R^{n+1}$ such that $T(\Lambda)=\Lambda$ and 1 is not an eigenvalue of $T$. Assuming $2\leq 2k<n$, then there exists a family of complete Riemannian metric on $\mathbb S^n\backslash\Lambda$ with positive constant $\sigma_k$-curvature, which are conformal to the standard metric in $\mathbb S^n$.
\end{theorem}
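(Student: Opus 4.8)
The plan is to carry out, in the fully nonlinear $\sigma_k$ setting, the gluing construction of Mazzeo and Pacard \cite{MR1712628}: start from the round sphere $(\mathbb S^n,g_{\mathbb S^n})$, which already has constant positive $\sigma_k$-curvature, excise a small geodesic ball around each point of $\Lambda$, graft onto each of them a \emph{Delaunay-type end}, and then correct the resulting approximate metric to an exact solution by an implicit function argument. The assumption $2\le 2k<n$ keeps the construction inside the positive cone: the model cylinder $\mathbb R\times\mathbb S^{n-1}$ has Schouten tensor with eigenvalues $-\frac{1}{2}$ (simple) and $\frac{1}{2}$ (multiplicity $n-1$), so $\sigma_1,\dots,\sigma_k>0$ precisely when $n>2k$, and the $\sigma_k$-equation is then uniformly elliptic along the whole deformation.

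First I would produce the building block. In the cylindrical coordinate $t=-\log|x|$ on $\mathbb R^n\setminus\{0\}$, the rotationally symmetric conformal metric $e^{2w(t)}g_{\mathbb R^n}$ has constant $\sigma_k$-curvature if and only if $w$ solves an autonomous second-order ODE; since the admissible solutions stay in $\Gamma_k^+$ this ODE is non-degenerate and has a conserved Hamiltonian-type quantity, and a phase-plane analysis produces a one-parameter family $w_\varepsilon$ of bounded periodic solutions — the Delaunay solutions — each giving a complete metric near the puncture with constant positive $\sigma_k$-curvature, with $\varepsilon$ measuring the neck size. Grafting a truncated Delaunay end into each excised ball and interpolating through cutoff functions on the overlap annuli yields, for every sufficiently small $\varepsilon=(\varepsilon_1,\dots,\varepsilon_m)$, an approximate metric $\bar g_\varepsilon$ on $\mathbb S^n\setminus\Lambda$, conformal to $g_{\mathbb S^n}$, whose $\sigma_k$-curvature equals the target constant up to an error supported in the gluing regions and small in a weighted Hölder norm.

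The heart of the argument is the linear analysis. Linearizing the $\sigma_k$-operator about $\bar g_\varepsilon$ gives a second-order elliptic Jacobi operator $L_\varepsilon$ in divergence form, self-adjoint for the measure determined by the $(k-1)$-st Newton tensor and asymptotically periodic on each Delaunay end. I would compute its indicial roots (Floquet exponents) mode by mode on $\mathbb S^{n-1}$, identify the finite-dimensional space of globally bounded Jacobi fields — those generated by translation in $t$, by $\partial_\varepsilon$ of the Delaunay family, and by the $n$ modes coming from Euclidean translations — and, in weighted Hölder spaces $\mathcal C^{2,\alpha}_\delta$ on $\mathbb S^n\setminus\Lambda$ with a small decay weight $\delta$ chosen between two consecutive indicial roots, show that $L_\varepsilon$ has a right inverse of norm bounded uniformly in $\varepsilon$ once one quotients by the deficiency space spanned by cutoff truncations of the non-decaying Jacobi fields. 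This is where symmetric balancing is used: one runs the whole construction on $T$-invariant functions; the obstructing Jacobi fields are the translational ones, which correspond to vectors of $\mathbb R^{n+1}$ fixed by $T$, and since $1$ is not an eigenvalue of $T$ that fixed subspace is trivial, so the restricted $L_\varepsilon$ is surjective and the surviving deficiency collapses to the harmless neck-size and phase directions, which are absorbed by letting the parameters of the grafted ends vary.

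Finally, writing the desired metric as $g=e^{2\phi}\bar g_\varepsilon$ recasts the problem as a nonlinear equation $\mathcal N_\varepsilon(\phi)=0$ with $\mathcal N_\varepsilon(0)$ small, $D\mathcal N_\varepsilon(0)=L_\varepsilon$, and a quadratic remainder controlled uniformly because the linearization stays uniformly elliptic inside $\Gamma_k^+$; composing the right inverse of the previous step with the contraction mapping principle over $T$-invariant weighted Hölder functions yields a solution $\phi_\varepsilon$ for every sufficiently small $\varepsilon$, and $g_\varepsilon=e^{2\phi_\varepsilon}\bar g_\varepsilon$ is then conformal to $g_{\mathbb S^n}$, complete on $\mathbb S^n\setminus\Lambda$ since each end is a perturbed Delaunay end, and has the prescribed positive constant $\sigma_k$-curvature. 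I expect the main obstacle to be this linear analysis — the sharp indicial-root/Floquet computation and the construction of an $\varepsilon$-uniformly bounded right inverse for the Jacobi operator of a fully nonlinear equation about a Delaunay background, and the verification that symmetric balancing removes exactly the obstructing cokernel — with a secondary but essential technical burden in the fine ODE study of the $\sigma_k$-Delaunay family (monotonicity and non-degeneracy in $\varepsilon$) and in keeping every estimate uniform as the neck parameters shrink.
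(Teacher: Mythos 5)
This theorem is not proved in the paper at all --- it is imported from Mazzieri--Ndiaye \cite{MN} --- and your outline follows essentially the route the paper attributes to them: a Mazzeo--Pacard style gluing of Delaunay-type $\sigma_k$ ends onto $\mathbb S^n\setminus\Lambda$, a uniformly bounded right inverse for the linearized operator on weighted H\"older spaces modulo a deficiency space of Jacobi fields, and a contraction/fixed point argument carried out $T$-equivariantly. The one point worth making precise is where the symmetric balancing enters: the obstruction it removes is the kernel of the linearization at the round metric, $L^1_{g_{\mathbb S^n}}=c_n(\Delta_{g_{\mathbb S^n}}+n)$, spanned by restrictions to $\mathbb S^n$ of linear functions on $\mathbb R^{n+1}$ (the degeneracy of the sphere discussed in the Introduction), and --- as you correctly observe --- a $T$-invariant such function corresponds to a vector fixed by $T$, hence vanishes when $1$ is not an eigenvalue of $T$.
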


We notice that by a result in \cite{MR2165306} there is no complete metric in $\mathbb S^n\backslash\{p\}$ with posi-tive constant $\sigma_k-$curvature which is radially simmetric and conformal to the standard round metric. If $\Lambda=\{p_1,\ldots,p_m\}\subset\mathbb S^n$ is a finite set which is symmetrically balanced, then $T(p)=p$, where $p=\sum p_i$ and $T\in O(n+1)$ is a linear orthogonal transformation such that 1 is not an eigenvalue of $T$ and $\Lambda$ is $T-$invariant. This implies that the only possibility is that $p=0$, and so $m\geq 2$.

Inspired by the construction presented in \cite{MR1399537}, Mazzieri and Segatti \cite{MR2900438} has constructed complete noncompact locally conformally flat metrics with positive constant $\sigma_k-$curvature with $2\leq k<n$. The method consists in performing the connected sum of a finite number of Delaunay-type metrics. For connected sum in the compact case see \cite{MR3023858}.

Our main result is concerned with the positive singular $\sigma_2-$Yamabe problem in the case where $X$ is finite set which can be a single point. We will construct solutions to this problem under a condition on the Weyl tensor. The method which we will apply is based on perturbation techniques and gluing procedure. This method was applied in \cite{MR2639545} to solve the problem in the case $k=1$. We restrict ourselves to the case $k=2$ since by the identity $\sigma_2(A_g)=\frac{1}{2}((\mbox{tr}_gA_g)^2-|A_g|^2_g)$ we find explicitly an expression to the equation $\sigma_2(A_g)=$ constant. It is an interesting problem to solve the singular $\sigma_k-$Yamabe problem for $3\leq k< n/2$. 

The main result of this paper reads as follows:
\vspace{0,5cm}

\noindent{\bf Main Theorem:} {\it Let $(M^n,g_0)$ be an compact Riemannian manifold nondege-nerate with dimension $n\geq 5$, $g_0$ conformal to some $2-$admissible metric and the $\sigma_2-$curvature is equal to $n(n-1)/8$. Let ${\{p_1,\ldots,p_m\}}$ be a set of points in $M$ such that $\nabla_{g_0}^jW_{g_0}(p_i)=0$ for $j=0,1,\ldots,\left[\frac{n-4}{2}\right]$ and $i=1,\ldots,m$, where $W_{g_0}$ is the Weyl tensor of the metric $g_0$. Then, there exist a constant $\varepsilon_0>0$ and a one-parameter family of complete metrics $g_\varepsilon$ on $M\backslash\{p_1,\ldots,p_m\}$ defined for $\varepsilon\in(0,\varepsilon_0)$, conformal to $g_0$, with constant $\sigma_2-$curvature equal to $n(n-1)/8$, ob\nolinebreak tained by at\nolinebreak taching Delaunay-type ends to the points $p_1,\ldots,p_m$. Moreover, $g_\varepsilon\rightarrow g_0$ uniformly on compact sets in $M\backslash\{p_1,\ldots,p_m\}$ as $\varepsilon\rightarrow 0$.}\\

We notice here that by a result of Sheng, Trudinger and Wang \cite{MR2362323} for all $4\leq 2k\leq n$ the positive $\sigma_k-$Yamabe problem always has a solution since the operator is variational and the initial metric is conformal to another one $k-$admissible. But, by a result due to Viaclovsky \cite{MR1738176} the operator is always variational in the case $k=2$. Also, it is well known by \cite{MR2389992} that for $3\leq k\leq n$ the operator is variational if and only if the manifold is locally conformally flat. 

%The hypothesis on the Weyl tensor and the nondegeneracy are necessary to our method, although we do not know if they are optimal one. 

The nondegeneracy is defined as follows
\begin{definition}\label{def5}
 A metric $g$ with constant $\sigma_2-$curvature equal to $n(n-1)/8$ is {\it nondegenerate} if the operator $L_g^1:C^{2,\alpha}(M)\rightarrow C^{0,\alpha}(M)$ is surjective for some $\alpha\in(0,1)$, where $L_g^1$ is defined in (\ref{eq030}). Here $C^{k,\alpha}(M)$ is the standard H\"older spaces on $M$.
\end{definition}

When the operator $L_g^1$ is elliptic, we need only check the injectivity. For example, it is clear that the round sphere $\mathbb S^n$ is degenerate because $L_{g_0}^1=c_n(\Delta_{g_0}+n)$ annihilates the restrictions of linear functions on $\mathbb R^{n+1}$ to $\mathbb S^n$, where $c_n=-(n-1)(n-4)/8$.

Mazzieri and Ndiaye proved their theorem in the sphere, which is locally conformally flat. With this assumption, in the neighborhood of $p_i$ the metric is esentially the standard metric on $\mathbb R^n$, and in this case it is possible to transfer the metric to cylindrical coordinates, where there is a family of well known Delaunay-type solutions. In our case we only have that the Weyl tensor vanishes to sufficiently high order at each point $p_i$. Since the singular $\sigma_k-$Yamabe problem is conformally invariant, it is more convenient to work in conformal normal coordinates. As indicated in \cite{MR2477893} in such coordinates we get some simplifications. The order $\left[\frac{n-4}{2}\right]$ comes up naturally in our method and will be fundamental to solve the problem locally, although we do not know if it is the optimal one.

The organization of this paper is as follows. 

All the analysis in the paper are done considering $m=1$. In Section \ref{sec03} we record some notation that will be used throughout the work. We review some results concerning the Delaunay-type solutions for the constant $\sigma_k-$curvature equation and using the right inverse found in \cite{MN} and a perturbation argument we construct a right inverse for the linearized operator about such solution. In Section \ref{sec02} we work with conformal normal coordinates in a neighborhood of $p$, since in theses coordinates we get some simplifications. We use the assumption on the Weyl tensor to finding a family of constant $\sigma_2-$curvature metrics in a small ball centered at the singular point, which depends on $n+2$ parameters with prescribed Dirichlet data. In Section \ref{sec04} we work with a metric which has constant $\sigma_2-$curvature and we find a family of constant $\sigma_2-$curvature metrics, which also depends on $n+2$ parameters with prescribed Dirichlet data. Finally, in Section \ref{sec05} we put all results obtained in the previous sections together to proof the Main Theorem for the case $m=1$. The fact that the metric is conformal to some one $2-$admissible allow us to use elliptic regularity. For the general case we briefly explain the minor changes that need to be made in order to deal with more than one singular point.

%\noindent{\bf Acknowledgements.} The author would like to thank the support provided by CNPq-Brazil and Institut Fourier-France where this work was started during his Post-doct. He is specially grateful to Fernanco C. Marques for his constant encouragement.

\section{Preliminaries}\label{sec03}

In this section we record some notation and results that will be used frequently, throughout the rest of the work and sometimes without comment. 

We use the symbols $c$, $C$, with or without subscript, to denote various positive constants. We write $f=O'(Cr^k)$ to mean $f=O(Cr^{k})$ and $\nabla f=O(Cr^{k-1})$. $O''$ is defined similarly.

\subsection{Notation}
Let us denote by $e_j$, for $j\in\mathbb{N}$, the eigenfunction of the Laplace operator on $\mathbb{S}^{n-1}$ with corresponding eigenvalue $\lambda_j$, where $\lambda_0=0$, $\lambda_1=\cdots=\lambda_n=n-1$, $\lambda_{n+1}=2n, \ldots$ and $\lambda_j\leq\lambda_{j+1}$ with unit $L^2-$norm. That is, 
$$\Delta_{\mathbb{S}^{n-1}}e_j+\lambda_j e_j=0\;\;\;\mbox{ and }\;\;\;\|e_j\|^2_2=\int_{\mathbb S^{n-1}}e_j^2=1.$$

Remember that $\{e_j\}$ is an orthonormal basis of $L^2(\mathbb S^{n-1})$. These eigenfunctions are restrictions to $\mathbb{S}^{n-1}\subset\mathbb{R}^n$ of homogeneous harmonic polynomials in $\mathbb{R}^n$. The $i-$th eigenvalue counted without multiplicity is $i(i+n-2)$.

Let $\mathbb{S}^{n-1}_r$ be the sphere with radius $r>0$. If the eigenfunction decomposition of the function $\phi\in L^2(\mathbb{S}_r^{n-1})$ is given by
$$\phi(r\theta)=\sum_{j=0}^\infty \phi_j(r)e_j(\theta)\;\;\;\mbox{ where }\;\;\;\phi_j(r)=\int_{\mathbb{S}^{n-1}}\phi(r\cdot)e_j,$$
then we define the projection $\pi_r''$ onto the {\it high frequencies space} by the formula
$$\pi''_r(\phi)(r\theta):=\sum_{j=n+1}^\infty\phi_j(r)e_j(\theta).$$
The {\it low frequencies space} on $\mathbb{S}_r^{n-1}$ is spanned by the constant functions and the restrictions to $\mathbb{S}^{n-1}_r$  of linear functions on $\mathbb{R}^n$. 

\subsection{The constant $\sigma_k-$curvatura equation}

Let $(M,g_0)$ be a closed Riemannian manifold of dimension $n\geq 3$. Let $A_{g_0}$ be the Schouten tensor of the metric $g_0$ defined in (\ref{eq056}).

The so-called $\sigma_k-$curvatura of $(M,g_0)$, which is a smooth function denoted by $\sigma_k(A_{g_0})$, is defined pointwise for each $p\in M$ as the $k-$th symmetric elementary function of the eigenvalues of the tensor $A_{g_0}(p)$. Since
$$\sigma_1(A_{g_0})=\mbox{tr}_{g_0}(A_{g_0})\;\;\mbox{ and }\;\;\sigma_2(A_{g_0})=\frac{1}{2}\left(\mbox{tr}_{g_0}(A_{g_0})^2-|A_{g_0}|_{g_0}^2\right),$$
notice that
\begin{equation}\label{eq010}
\sigma_1(A_{g_0})=\frac{R_{g_0}}{2(n-1)}\;\;\;\;\mbox{ and }\;\;\;\;\sigma_2(A_{g_0})=\frac{n}{8(n-1)(n-2)^2}R_{g_0}^2- \frac{|Ric_{g_0}|_{g_0}^2}{2(n-2)^2}.
\end{equation}
The Euclidean space $\mathbb{R}^n$ with its standard metric is $\sigma_k-$flat for any $1\leq k\leq n$, whereas the standard sphere $\mathbb{S}^n$ has $A_{\mathbb{S}^n}=\frac{1}{2}g_{\mathbb{S}^n}$ and thus 
$$\sigma_k(A_{\mathbb{S}^n})=\frac{1}{2^k}\left(\begin{array}{c}
n\\
k
\end{array}\right)\;\;\mbox{ for } \;\;1\leq k\leq n.$$

For a given nonempty closet set $X\subset M$, the positive singular $\sigma_k-$Yamabe problem amounts to find a conformal factor $u\in C^\infty(M\backslash X)$ such that the metric $g=u^{\frac{4k}{n-2k}}g_0$ is complete on $M\backslash X$ and verifies
\begin{equation}\label{eq000}
\sigma_k(A_{g})=\frac{1}{2^{k}}\left(\begin{array}{c}
n\\
k
\end{array}\right)\;\;\;\mbox{ in }\;\;\;M\backslash X.
\end{equation}

Now we define the nonlinear operator
\begin{equation}\label{eq057}
H_{g_0}(u)=\left(\frac{n-2k}{4k}\right)^ku^{\frac{2kn}{n-2k}}\sigma_k(A_{g})-\left(\begin{array}{c}
n\\
k
\end{array}\right)\left(\frac{n-2k}{4k}\right)^k u^{\frac{2kn}{n-2k}}. 
\end{equation}

The equation (\ref{eq000}) is equivalent to
\begin{equation}\label{eq015}
H_{g_0}(u)=0\;\;\;\mbox{ in }\;\;\;M\backslash X,
\end{equation}
with a suitable condition in the singular set, for instance, the function $u$ goes to infinity with a sufficiently fast rate. This equation is fully nonlinear for $k>1$.

The operator $H_{g_0}$ obeys the following relation concerning conformal changes of the metric
\begin{equation}\label{eq007}
H_{v^{4k/(n-2k)}g}(u)=v^{-\frac{2kn}{n-2k}}H_{g}(vu).
\end{equation}
and the Schouten tensor obeys the following well transformation law
$$A_{v^{4k/(n-2k)}g}=A_{g}-\frac{2k}{n-2k}u^{-1}\nabla_{g}^2u+\frac{2kn}{(n-2k)^2}u^{-2}du\otimes du-\frac{2k^2}{(n-2k)^2}u^{-2}|du|_{g}^2g$$

In this work we are interested in the case $k=2$. So, using the second formula in (\ref{eq010}) we obtain the expression for the nonlinear operator $H_{g_0}$ in this case
\begin{equation}
\begin{array}{rcl}\label{eq014}
H_{g_0}(u) & = & \left(\displaystyle\frac{n-4}{4}\right)^2u^4\sigma_2(A_{g_0}) +\displaystyle \frac{u^2}{2}(\Delta_{g_0} u)^2 -\displaystyle\frac{n-4}{8(n-2)}R_{g_0}u^2|\nabla_{g_0} u|_{g_0}^2 \\
\\
& - & \displaystyle\frac{n-4}{8(n-2)}R_{g_0}u^3\Delta_{g_0} u+ \displaystyle \frac{n-2}{n-4}u|\nabla_{g_0} u|_{g_0}^2\Delta_g u- \displaystyle\frac{u^2}{2}|\nabla_{g_0}^2u|_{g_0}^2\\
\\
& + & \displaystyle\left\langle Ric_{g_0},\frac{n-4}{4(n-2)}u^3\nabla_{g_0}^2u-\frac{n}{4(n-2)}u^2\nabla_{g_0} u\otimes\nabla_{g_0} u\right\rangle_{g_0} \\
\\
& + & \displaystyle\frac{n}{n-4}u\langle\nabla_{g_0} u\otimes\nabla_{g_0} u,\nabla_{g_0}^2u\rangle_{g_0}- \displaystyle\frac{n(n-1)(n-4)^2}{128}|u|^{\frac{3n+4}{n-4}}u.
\end{array}
\end{equation}

We seek a positive function which solves (\ref{eq015}). We will use perturbation techniques and gluing methods to finding this solution. Expanding $H_{g}$ about a function $u$, not necessarilly a solution, gives
$$H_g(u+v)=H_g(u)+L_g^{u}(v)+Q^{u}(v),$$
where
\begin{equation}\label{eq037}
\begin{array}{rcl}
   L_g^{u}(v) & = & \displaystyle\left.\frac{d}{dt}\right|_{t=0}H_g(u+tv)\\
   \\
   & = & \left(\displaystyle u^2\Delta u- \frac{n-4}{8(n-2)}R_gu^3+\frac{n-2}{n-4}u|\nabla u|^2\right)\Delta v\\
   \\
   & + & \left(\displaystyle \frac{(n-4)^2}{4}u^3\sigma_2(A_g)+u(\Delta u)^2-\frac{n-4}{4(n-2)}R_gu|\nabla u|^2\right.
   \end{array} 
\end{equation}
$$\begin{array}{rcl}
& + & \displaystyle\frac{n-2}{n-4}|\nabla u|^2\Delta u -\frac{3(n-4)}{8(n-2)}R_gu^2\Delta u -u|\nabla^2u|^2\\
\\
  & + & \displaystyle\frac{3(n-4)}{4(n-2)}u^2\langle Ric_g,\nabla^2u\rangle-\frac{n}{2(n-2)}u\langle Ric_g,\nabla u\otimes\nabla u\rangle\\
   \\
  & + & \left.\displaystyle\frac{n}{n-4}\langle\nabla u\otimes\nabla u,\nabla^2u\rangle-\frac{n^2(n-1)(n-4)}{32}|u|^{\frac{3n+4}{n-4}}\right)v\\
  \\
& + & \displaystyle\left\langle \frac{2(n-2)}{n-4}u\nabla u\Delta u-\frac{n-4}{4(n-2)}R_gu^2\nabla u,\nabla v\right\rangle\\
\\
& + & \left\langle \displaystyle\frac{n-4}{4(n-2)}u^3Ric_g-u^2\nabla^2u+\frac{n}{n-4}u\nabla u\otimes\nabla u,\nabla^2v\right\rangle\\
\\
& + & \displaystyle\left\langle \frac{2n}{n-4}u\nabla^2u-\frac{n}{2(n-2)}u^2Ric_g,\nabla u\otimes\nabla v\right\rangle
  \end{array} 
$$
and
\begin{equation}\label{eq039}
 Q^u(v)=\int_0^1\int_0^1\frac{d}{ds}L_g^{u+tsv}(v)dsdt.
\end{equation}

Note that, from the property (\ref{eq007}), we obtain
\begin{equation}\label{eq024}
 L_{u^{4k/(n-2k)} g}^v(w)=u^{-\frac{2kn}{n-2k}}L_g^{uv}(uw).
\end{equation}

It is important to emphasize here that in this work $(M,g_0)$ always will be a compact Riemannian manifold of dimension $n\geq 5$ with constant $\sigma_2-$curvature equal to $n(n-1)/8$ and nondegenerate, see Definition \ref{def5}. This implies that the operator $L_{g_0}^1:C^{2,\alpha}(M)\rightarrow C^{0,\alpha}(M)$ given by
\begin{equation}\label{eq030}
 L_{g_0}^1u=-\frac{n-4}{8(n-2)}R_{g_0} \Delta_{g_0}u- \frac{n(n-1)(n-4)}{8}u+\frac{n-4}{4(n-2)}\langle Ric_{g_0},\nabla_{g_0}^2u\rangle_{g_0}
\end{equation}
is surjective for some $\alpha\in(0,1)$. In the round sphere $\mathbb S^n$ we have $R_{g_0}=n(n-1)$ and $Ric_{g_0}=(n-1)g_0$, so
$$L_{g_0}^1u=-\frac{(n-1)(n-4)}{8}(\Delta_{g_0}+n)u.$$

\subsection{Delaunay-type solutions}\label{sec08}

In this section we recall some facts about the Delaunay-type solutions in the $\sigma_k-$curvature setting. Our solution to the singular $\sigma_k-$Yamabe problem will be asymptotic to some Delaunay-type solutions.

If $g=u^{\frac{4k}{n-2k}}\delta$ is a complete metric in $\mathbb{R}^n\backslash\{0\}$ conformal to the Euclidean standard metric $\delta$ on $\mathbb{R}^n$ with constant $\sigma_k-$curvature equal to $2^{-k}\left(\begin{array}{c}
n\\
k
\end{array}\right)$, then $u$ is a solution of the equation 
\begin{equation}\label{eq006}
H_\delta(u)=0\;\;\mbox{ in }\;\;\mathbb{R}^n\backslash\{0\}.
\end{equation}

Let us consider that $u$ is rotationally invariant, and thus the equation it satisfies may be reduced to an ordinary differential equation. These metrics has been studied in \cite{MR2165306}, see also \cite{MN}.

Since $\mathbb{R}^n\backslash\{0\}$ is conformally diffeomorphic to a cylinder, it will be  convenient to use the cylindrical background. In other words, consider the conformal diffeomorphism
$\Phi:(\mathbb{S}^{n-1}\times\mathbb{R},g_{cyl})\rightarrow (\mathbb{R}^n\backslash\{0\},\delta)$
defined by $\Phi(\theta,t)=e^{-t}\theta$ and where $g_{cyl}:=d\theta^2+dt^2$. Then $\Phi^*\delta=e^{-2t}g_{cyl}$ . Define $v(t):=e^{\frac{2k-n}{2k}t}u(e^{-t}\theta)=|x|^{\frac{n-2k}{2k}}u(x)$, where $t=-\log |x|$ and $\theta=\frac{x}{|x|}$. Note that $v$ is defined in the whole cylinder and $\Phi^*g= v^{\frac{4k}{n-2k}}g_{cyl}.$

Therefore, the conformal factor $v$ satisfies the following ODE
\begin{equation}\label{eq001}
\left(v^2-\left(\frac{2k}{n-2k}\right)^2\dot{v}^2\right)^{k-1}\left(v-\left( \frac{2k}{n-2k}\right)^2\ddot{v}\right)=\frac{n}{n-2k}v^{\frac{2kn}{n-2k}-1}.
\end{equation}

The Hamiltonian energy, given by
\begin{equation}\label{002}
H(v,w)=\left(v^2-\left(\frac{2k}{n-2k}\right)^2w^2\right)^{k}-v^{\frac{2kn}{n-2k}}
\end{equation}
is constant along solutions of (\ref{eq001}). We summarize the basic properties of this solutions in the next proposition (see Propositon 2.1 in \cite{MN} and Proposition 3.1 in \cite{MR2900438}, see also \cite{MR2165306}).
\begin{proping}\label{propo000}
Suppose $H(v,\dot{v})=H_0\in\left[0,\frac{2k}{n-2k} \left(\frac{n-2k}{n}\right)^{\frac{n}{2k}}\right]$, then we have three cases:
\begin{enumerate}
\item[a) ] If $H_0=0$, then either we have the trivial solution $v\equiv 0$, or $v(t)=\cosh^{-\frac{n-2k}{2k}}(t-c)$, for some $c\in\mathbb{R}$. The latter conformal factor gives rise to a metric on $\mathbb{S}^{n-1}\times\mathbb{R}$ which is non complete and which corresponds in fact to the standard metric $g_{\mathbb{S}^n}$ on $\mathbb{S}^n\backslash\{p,-p\}$.

\item[b) ] If $0<H_0<\frac{2k}{n-2k}\left(\frac{n-2k}{2k}\right)^{\frac{n}{2k}}$, then, in correspondence of each $H_0$, there exists a unique solution $v$ of (\ref{eq001}) satisfying the conditions $\dot{v}(0)=0$, and $\ddot{v}(0)>0$. This solution is periodic and it is such that $0<v(t)<1$ for all $t\in\mathbb{R}$. This family of solutions gives rise to a family of complete and periodic metrics on $\mathbb{R}\times\mathbb{S}^{n-1}$. 

\item[c) ] If $H_0=\frac{2k}{n-2k}\left(\frac{n-2k}{n}\right)^{\frac{n}{2k}}$, then there exists a unique solutions to (\ref{eq001}) given by $v(t)=\left(\frac{n-2k}{n}\right)^{\frac{n-2k}{4k^2}}$, for $t\in\mathbb{R}$. This solution give rise to a complete metric on $\mathbb{S}^{n-1}\times\mathbb{R}$ and it is in fact a constant multiple of the cilindrical metric $g_{cyl}$.

\end{enumerate}
\end{proping}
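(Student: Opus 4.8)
The plan is to reduce the second-order ODE (\ref{eq001}) to a first-order phase-plane problem by means of its conserved Hamiltonian, and then to read off the three regimes from the shape of the level sets. First I would verify by direct differentiation that $H(v,\dot v)$, defined in (\ref{002}), is a first integral: computing $\frac{d}{dt}H(v,\dot v)$ and substituting the expression for $\ddot v$ furnished by (\ref{eq001}), all terms cancel. Hence every solution lies on a level set $\{H(v,w)=H_0\}$ in the $(v,w)$-plane, $w=\dot v$. On the admissible region $\{v>0,\ v^{2}>(\frac{2k}{n-2k})^{2}w^{2}\}$ --- precisely where the associated metric $v^{\frac{4k}{n-2k}}g_{cyl}$ lies in $\Gamma_k^+$ and (\ref{eq001}) is non-degenerate --- solving $H(v,\dot v)=H_0$ for $\dot v^{2}$ yields
\[
\dot v^{2}=\left(\frac{n-2k}{2k}\right)^{2}\left(v^{2}-\left(v^{\frac{2kn}{n-2k}}+H_0\right)^{1/k}\right)=:\left(\frac{n-2k}{2k}\right)^{2}f_{H_0}(v),
\]
and one simultaneously reads off that $v^{2}-(\frac{2k}{n-2k})^{2}\dot v^{2}=(v^{\frac{2kn}{n-2k}}+H_0)^{1/k}>0$ along every such orbit, so the equation does stay non-degenerate there.

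The analysis of the right-hand side reduces to that of the single-variable function $g(v):=v^{2k}-v^{\frac{2kn}{n-2k}}$, since $f_{H_0}(v)\ge 0$ is equivalent to $g(v)\ge H_0$. A one-line computation shows $g$ is strictly unimodal on $(0,1)$, with $g(0)=g(1)=0$ and a unique maximum $H_*:=\frac{2k}{n-2k}\left(\frac{n-2k}{n}\right)^{\frac{n}{2k}}$ attained at $v_*:=\left(\frac{n-2k}{n}\right)^{\frac{n-2k}{4k^{2}}}$, the latter being also the unique equilibrium of (\ref{eq001}); consequently the admissible level sets are nonempty only for $H_0\in[0,H_*]$. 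For $H_0=0$ one has $f_0(v)=v^{2}\big(1-v^{\frac{4k}{n-2k}}\big)$, so either $v\equiv 0$ or, integrating $\dot v=\pm\frac{n-2k}{2k}v\sqrt{1-v^{\frac{4k}{n-2k}}}$ and using $1-\cosh^{-2}=\tanh^{2}$, $v(t)=\cosh^{-\frac{n-2k}{2k}}(t-c)$; this is case (a). For $0<H_0<H_*$, unimodality gives $\{g\ge H_0\}=[v_-,v_+]$ with $0<v_-<v_*<v_+<1$, the only turning points of $v$, and $f_{H_0}>0$ on $(v_-,v_+)$ with simple zeros at the endpoints; hence the level set is an embedded closed curve around $(v_*,0)$, on which Picard--Lindel\"of and the a priori bound $v_-\le v\le v_+$ give a unique global solution with $\dot v(0)=0$, automatically periodic since it traverses the loop in finite time. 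Evaluating (\ref{eq001}) at a turning point shows $\ddot v(0)>0$ iff $v(0)<v_*$, i.e. iff $v(0)=v_-$, so the normalization $\dot v(0)=0$, $\ddot v(0)>0$ selects a unique member of the family: this is case (b). Finally $H_0=H_*$ forces $[v_-,v_+]$ to degenerate to $\{v_*\}$ and the level set to the equilibrium $(v_*,0)$, giving the constant solution of case (c).

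It remains to identify the induced metrics. In case (a), $v^{\frac{4k}{n-2k}}g_{cyl}=\cosh^{-2}(t-c)\,g_{cyl}$ is, after pulling back by $\Phi$ and composing with inverse stereographic projection, the round metric $g_{\mathbb{S}^n}$ on $\mathbb{S}^n$ minus two antipodal points; since $\cosh^{-1}(t-c)$ is integrable at $t=\pm\infty$, the ends have finite length and the metric is incomplete. In case (b) the conformal factor $v$ is pinched between the positive constants $v_-$ and $v_+$, so $v^{\frac{4k}{n-2k}}g_{cyl}$ is uniformly equivalent to $g_{cyl}$, hence complete, and is periodic in $t$ because $v$ is. In case (c), $v$ is a positive constant, so the metric is a constant multiple of $g_{cyl}$, which is complete. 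One may also note, as in \cite{MR2900438}, that (\ref{eq001}) has no non-constant bounded solution for $H_0\notin[0,H_*]$, which is why the Proposition restricts $H_0$ to this interval.

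\textbf{Main obstacle.} The crux is the phase-plane step for $0<H_0<H_*$, and it rests entirely on the strict unimodality of $g$ on $(0,1)$: this is what makes the level set an embedded closed curve with exactly two simple turning points, and that single structural fact delivers existence, uniqueness under the stated normalization, and periodicity at once. Once it is in place, the remaining items --- the first integral, the values of $H_*$ and $v_*$, the explicit solution at $H_0=0$, and the metric identifications --- are direct computations or routine ODE arguments.
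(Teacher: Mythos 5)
Your argument is correct, and it is essentially the same Hamiltonian reduction and phase-plane analysis used in the sources the paper cites for this statement (the paper itself gives no proof, referring to Proposition 2.1 in \cite{MN}, Proposition 3.1 in \cite{MR2900438} and \cite{MR2165306}): conservation of $H$, the reduction to $\dot v^2=\left(\frac{n-2k}{2k}\right)^2\bigl(v^2-(v^{\frac{2kn}{n-2k}}+H_0)^{1/k}\bigr)$, and the unimodality of $v\mapsto v^{2k}-v^{\frac{2kn}{n-2k}}$ on $(0,1)$, which yields the three regimes, the turning points $v_\pm$, the normalization $\ddot v(0)>0$ selecting $v(0)=v_-$, and the stated values of $v_*$ and of the critical energy. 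Your explicit restriction to the region $v^2>\left(\frac{2k}{n-2k}\right)^2\dot v^2$ (so that the $k$-th root is legitimate, also for $k$ even) matches the admissibility assumption (\ref{eq033}) the paper takes from \cite{MR2165306}, so no gap remains.
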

We will write the solution of (\ref{eq001}) given by the Proposition \ref{propo000} when $H_0>0$ as $v_\varepsilon$, where $v_{\varepsilon}(0)=\min v_{\varepsilon}=\varepsilon^{(n-2k)/2k}$, for $\varepsilon\in(0,((n-2k)/n)^{\frac{1}{2k}})$ and the corresponding solution of (\ref{eq006}) as $u_\varepsilon(x)=|x|^{\frac{2k-n}{2k}}v_\varepsilon(-\log|x|)$.% Also, we will denoted by $T_\varepsilon$ the period of $v_\varepsilon$.

For our  purposes, the next proposition gives sufficient information about their behavior as $\varepsilon$ tends to zero. Its proof can be found in \cite{MN}, but we include it here for the sake of the reader.
\begin{proping}\label{propo003}
For $0<\varepsilon<\left(\frac{n-2k}{n}\right)^{\frac{1}{2k}}$. Then we have that there exists a positive constant $c_{n,k}>0$ depending only on $n$ and $k$ such that for all $t\in\mathbb R$ we have
$$\begin{array}{rcr}
   \left|v_\varepsilon(t)-\varepsilon^{\frac{n-2k}{2k}} \cosh\left(\displaystyle\frac{n-2k}{2k}t\right)\right| & \leq & c_{n,k}\varepsilon^{\frac{n+2k}{2k}}e^{\frac{n+2k}{2k}|t|},\\
   \\
   \left|\dot{v}_\varepsilon(t)-\displaystyle\frac{n-2k}{2k}\varepsilon^{\frac{n-2k}{2k}} \sinh\left(\frac{n-2k}{2k}t\right)\right| & \leq & c_{n,k}\varepsilon^{\frac{n+2k}{2k}}e^{\frac{n+2k}{2k}|t|},\\
   \\
   \left|\displaystyle\ddot{v}_\varepsilon(t)-\left(\frac{n-2k}{2k}\right)^2 \varepsilon^{\frac{n-2k}{2k}} \cosh\left(\frac{n-2k}{2k}t\right)\right| & \leq & c_{n,k} \varepsilon^{\frac{n+2k}{2k}}e^{\frac{n+2k}{2k}|t|}.
  \end{array}
$$
\end{proping}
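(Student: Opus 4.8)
\textbf{Proof proposal for Proposition \ref{propo003}.}

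The plan is to estimate $v_\varepsilon$ by comparing it with its ``linearization'' $\varepsilon^{\frac{n-2k}{2k}}\cosh\left(\frac{n-2k}{2k}t\right)$, which is the solution of the linearized equation obtained by dropping the nonlinear term on the right-hand side of \eqref{eq001}. First I would record the normalization: by construction $v_\varepsilon(0)=\varepsilon^{\frac{n-2k}{2k}}$, $\dot v_\varepsilon(0)=0$, and from \eqref{002} the conserved Hamiltonian energy is $H_0=H(v_\varepsilon,\dot v_\varepsilon)=v_\varepsilon(0)^{2k}-v_\varepsilon(0)^{\frac{2kn}{n-2k}}=\varepsilon^{n-2k}-\varepsilon^{\frac{n^2-2kn}{n-2k}}$, so $H_0=\varepsilon^{n-2k}(1+O(\varepsilon^{4k^2/(n-2k)}))$ is comparable to $\varepsilon^{n-2k}$. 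Using the Hamiltonian identity $\left(v^2-\left(\frac{2k}{n-2k}\right)^2\dot v^2\right)^k = v^{\frac{2kn}{n-2k}}+H_0$, one gets a first-order ODE for $v_\varepsilon$; since the minimum value is $\varepsilon^{\frac{n-2k}{2k}}$ and (by Proposition \ref{propo000}(b)) $0<v_\varepsilon<1$, a bootstrap shows $v_\varepsilon(t)\le C\varepsilon^{\frac{n-2k}{2k}}e^{\frac{n-2k}{2k}|t|}$ on all of $\mathbb R$. In particular the nonlinear term $\frac{n}{n-2k}v_\varepsilon^{\frac{2kn}{n-2k}-1}$ is bounded by $C\varepsilon^{\frac{n+2k}{2k}}\cdot\varepsilon^{\frac{(n-2k)(4k)/(2k)\cdots}{}}$—more precisely it carries an extra factor $\varepsilon^{\frac{(n-2k)}{2k}\cdot\frac{4k^2}{n-2k}}=\varepsilon^{2k}$ of smallness relative to the size one would need, which is the source of the gain.

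Next I would set $w_\varepsilon(t):=v_\varepsilon(t)-\varepsilon^{\frac{n-2k}{2k}}\cosh\left(\frac{n-2k}{2k}t\right)$. Subtracting the two ODEs, $w_\varepsilon$ satisfies a linear second-order ODE of the form $\left(\frac{2k}{n-2k}\right)^2\ddot w_\varepsilon - w_\varepsilon = F_\varepsilon(t)$, where $F_\varepsilon$ collects (i) the genuinely nonlinear term $-\frac{n-2k}{n}\cdot\frac{2k^2}{?}v_\varepsilon^{\frac{2kn}{n-2k}-1}$ and (ii) the error terms coming from expanding the factor $\left(v^2-\left(\frac{2k}{n-2k}\right)^2\dot v^2\right)^{k-1}$ around the value it would have for the pure cosh; both are controlled, via the crude bound of the previous step, by $C\varepsilon^{\frac{n+2k}{2k}}e^{\frac{n+2k}{2k}|t|}$. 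Writing the solution of this linear ODE by variation of parameters against the fundamental system $\left\{\cosh\left(\frac{n-2k}{2k}t\right),\sinh\left(\frac{n-2k}{2k}t\right)\right\}$, and using that $w_\varepsilon(0)=0$, $\dot w_\varepsilon(0)=0$ (both $v_\varepsilon$ and the cosh are even with matching value at $0$), the Duhamel integral $\int_0^t e^{\pm\frac{n-2k}{2k}(t-s)}F_\varepsilon(s)\,ds$ is dominated by $C\varepsilon^{\frac{n+2k}{2k}}e^{\frac{n+2k}{2k}|t|}$ since the exponential weight $\frac{n+2k}{2k}$ strictly dominates $\frac{n-2k}{2k}$ (so the convolution does not lose a polynomial factor). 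This gives the first inequality. Differentiating the variation-of-parameters formula once and twice yields the bounds for $\dot v_\varepsilon$ and $\ddot v_\varepsilon$ with the same right-hand side, using \eqref{eq001} itself to express $\ddot v_\varepsilon$ algebraically in terms of $v_\varepsilon,\dot v_\varepsilon$ and then inserting the estimates just obtained.

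The one delicate point—and the step I expect to be the main obstacle—is making the bootstrap in the first paragraph fully rigorous and uniform in $\varepsilon$: one must show that the crude a priori bound $v_\varepsilon(t)\le C\varepsilon^{\frac{n-2k}{2k}}e^{\frac{n-2k}{2k}|t|}$ holds with a constant independent of $\varepsilon$, which requires care because $v_\varepsilon$ is only globally bounded by $1$ (a bound that is useless once $e^{\frac{n-2k}{2k}|t|}\gg \varepsilon^{-\frac{n-2k}{2k}}$). The right way around this is to run the comparison on the finite interval $|t|\le T_\varepsilon:=\frac{2k}{n-2k}\log(\varepsilon^{-1})$ using a continuity/maximal-interval argument (the quadratic nonlinearity is genuinely small there), and for $|t|\ge T_\varepsilon$ simply use $v_\varepsilon<1\le \varepsilon^{\frac{n-2k}{2k}}e^{\frac{n-2k}{2k}|t|}$ directly; the two regimes patch together. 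Once this uniform crude bound is in hand, everything else is the routine Duhamel estimate sketched above. I would also remark that the argument is essentially that of \cite{MN}, reproduced here for completeness as the statement announces.
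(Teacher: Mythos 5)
Your overall architecture matches the paper's proof: write $v_\varepsilon$ (equivalently $w_\varepsilon=v_\varepsilon-\varepsilon^{\frac{n-2k}{2k}}\cosh(\frac{n-2k}{2k}t)$) by variation of parameters against $\cosh,\sinh$ with the full nonlinear term of \eqref{eq001} as the forcing (this is exactly \eqref{eq011}), bound the forcing by $\varepsilon^{\frac{n+2k}{2k}}e^{\frac{n+2k}{2k}|t|}$, and differentiate the formula for the $\dot v_\varepsilon$ and $\ddot v_\varepsilon$ estimates. But the step you single out as ``the main obstacle'' — establishing the crude bound $v_\varepsilon(t)\le C\varepsilon^{\frac{n-2k}{2k}}e^{\frac{n-2k}{2k}|t|}$ — is where your proposal has a genuine defect, and it is also where you miss the point of the Hamiltonian. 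Your patching argument does not close: with $T_\varepsilon=\frac{2k}{n-2k}\log(\varepsilon^{-1})$ the claimed inequality $1\le\varepsilon^{\frac{n-2k}{2k}}e^{\frac{n-2k}{2k}|t|}$ at $|t|=T_\varepsilon$ reads $1\le\varepsilon^{\frac{n-2k}{2k}-1}$, which is false whenever $n>4k$ (e.g.\ $k=2$, $n\ge 9$, a case the Main Theorem covers); pushing $T_\varepsilon$ up to $\log(\varepsilon^{-1})$ instead forces the bootstrap to run into the region where $v_\varepsilon$ is of order $1$ and the nonlinearity is no longer small, so the continuity argument as sketched does not obviously survive there. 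No bootstrap is needed at all: since $H(v_\varepsilon,\dot v_\varepsilon)=\varepsilon^{n-2k}-\varepsilon^n>0$ is conserved and $v_\varepsilon\ge\varepsilon^{\frac{n-2k}{2k}}$, the first integral \eqref{002} gives directly $\dot v_\varepsilon^2\le\left(\frac{n-2k}{2k}\right)^2\left(v_\varepsilon^2-\varepsilon^{\frac{n-2k}{k}}\right)$ for all $t$, and integrating this first-order inequality yields the global, $\varepsilon$-uniform bound $v_\varepsilon(t)\le\varepsilon^{\frac{n-2k}{2k}}\cosh\left(\frac{n-2k}{2k}t\right)$ with constant exactly $1$ — this is \eqref{eq002} in the paper, and it is the whole content of the ``crude'' step.

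A second point you should make explicit: in the forcing term the factor $h_\varepsilon^{1-k}=\left(v_\varepsilon^2-(\tfrac{2k}{n-2k})^2\dot v_\varepsilon^2\right)^{1-k}$ is a negative power for $k>1$, so the upper bound on $v_\varepsilon$ alone does not control it; you need the lower bound $h_\varepsilon^{k}=v_\varepsilon^{\frac{2kn}{n-2k}}+H(v_\varepsilon,\dot v_\varepsilon)\ge v_\varepsilon^{\frac{2kn}{n-2k}}$, which combined with \eqref{eq002} gives $v_\varepsilon^{\frac{2kn}{n-2k}-1}h_\varepsilon^{1-k}\le v_\varepsilon^{\frac{n+2k}{n-2k}}\le\varepsilon^{\frac{n+2k}{2k}}e^{\frac{n+2k}{2k}|t|}$ (estimate \eqref{eq004}). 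Your item (ii), ``error terms from expanding the factor around the cosh,'' is spurious: since the cosh solves the homogeneous equation exactly and the initial data match, the forcing is precisely the nonlinear term and nothing needs to be expanded. With the Hamiltonian used in these two places, the rest of your Duhamel computation is the paper's proof.
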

\begin{proof}
Since the Hamiltonian energy $H$ is constant along solutions of (\ref{eq001}) and $v_\varepsilon(0)=\varepsilon^{\frac{n-2k}{2k}}$ is the minimum of $v_\varepsilon$, then $H\left(v_\varepsilon,\dot{v}_\varepsilon\right)=\varepsilon^{n-2k}- \varepsilon^n>0$. From \cite{MR2165306} we have
\begin{equation}\label{eq033}
 h_\varepsilon:=v_\varepsilon^2-\left(\frac{2k}{n-2k}\right)^2\dot v_\varepsilon^2>0.
\end{equation}
Thus
$$\dot{v}_\varepsilon^2=\left(\frac{n-2k}{2k}\right)^2\left(v_\varepsilon^2- \left(v_\varepsilon^{\frac{2kn}{n-2k}}+\varepsilon^{n-2k}-\varepsilon^n \right)^{1/k}\right)$$
and $\varepsilon^{\frac{n-2k}{2k}}\leq v_\varepsilon(t)$, for all $t\in\mathbb{R},$
implies that
\begin{equation*}\label{eq012}
\dot{v}_\varepsilon^2\leq \left(\frac{n-2k}{2k}\right)^2\left( v_\varepsilon^2-\varepsilon^{\frac{n-2k}{k}}\right).
\end{equation*}
Therefore, using that $\cosh t\leq e^{|t|}$ for all $t\in\mathbb R$, we get
\begin{equation}\label{eq002}
v_\varepsilon\leq \varepsilon^{\frac{n-2k}{2k}}\cosh \left(\frac{n-2k}{2k}t\right)\leq \varepsilon^{\frac{n-2k}{2k}} e^{\frac{n-2k}{2k}|t|}.
\end{equation}

Next, writing the equation (\ref{eq001}) for $v_\varepsilon$ as
$$\ddot{v}_\varepsilon- \frac{(n-2k)^2}{4k^2}v_\varepsilon=-\frac{n(n-2k)}{4k^2} v_\varepsilon^{\frac{2kn}{n-2k}-1}\left( v_\varepsilon^2-\left(\frac{2k}{n-2k}\right)^2\dot{v}_\varepsilon^2\right)^{1-k},$$
and noting that $\cosh\left(\frac{n-2k}{2k}t\right)$ satisfies the equation
$$\left(\cosh\left(\frac{n-2k}{2k}t\right)\right)''- \frac{(n-2k)^2}{4k^2}\cosh\left(\frac{n-2k}{2k}t\right)=0,$$
we can represent $v_\varepsilon$ as
\begin{equation}\label{eq011}
 \begin{array}{rcl}
  v_\varepsilon(t) & = & \displaystyle\varepsilon^{\frac{n-2k}{2k}}\cosh\left(\frac{n-2k}{2k}t\right)\\
  \\
  & & - \displaystyle\frac{n(n-2k)}{4k^2}e^{\frac{n-2k}{2k}t}\int_0^t e^{\frac{2k-n}{k}s}\int_0^s e^{\frac{n-2k}{2k}z}v_\varepsilon^{\frac{2kn}{n-2k}-1}(z)\left( v_\varepsilon^2(z)\right.\\
  \\
  & & \displaystyle\left.-\left(\frac{2k}{n-2k}\right)^2\dot{v}_\varepsilon^2(z) \right)^{1-k}dzds.
 \end{array}
\end{equation}

Now, since $H(v_\varepsilon,\dot{v}_\varepsilon)>0$, we get from (\ref{002}) and (\ref{eq002}) that
\begin{equation}\label{eq004}
\begin{array}{rcl}
 v_\varepsilon^{\frac{2kn}{n-2k}-1}\left( v_\varepsilon^2-\left(\frac{2k}{n-2k}\right)^2\dot{v}_\varepsilon^2 \right)^{1-k} & = & \left(\frac{v_\varepsilon^{\frac{2kn}{n-2k}}} {H(v_\varepsilon,\dot{v}_\varepsilon)+v_\varepsilon^{\frac{2kn}{n-2k}}} \right)^{\frac{k-1}{k}} v_\varepsilon^{\frac{n+2k}{n-2k}}\\
 \\
 & \leq & \varepsilon^{\frac{n+2k}{2k}}e^{\frac{n+2k}{2k}|t|}.
\end{array}
\end{equation}

From (\ref{eq002}), (\ref{eq011}) and (\ref{eq004}) we get that for all $t>0$
\begin{equation*}\label{eq003}
   0  \leq  \varepsilon^{\frac{n-2k}{2k}}\cosh\left( \displaystyle\frac{n-2k}{2k}t\right)- v_\varepsilon(t) \leq c_{n,k}\varepsilon^{\frac{n+2k}{2k}} e^{\frac{n+2k}{2k}t},
\end{equation*}
for some constant $c_{n,k}>0$ which depends only on $n$ and $k$

Differentiating the identity (\ref{eq011}), we get
\begin{equation}\label{eq009}
 \dot{v}_\varepsilon(t)=\frac{n-2k}{2k} \varepsilon^{\frac{n-2k}{2k}}\sinh\left(\frac{n-2k}{2k}t \right)-I_1(t)-I_2(t),
\end{equation}
where
$$\begin{array}{rcl}
  I_1(t) & = & \displaystyle \frac{n(n-2k)^2}{(2k)^3}e^{\frac{n-2k}{2k}t}\int_0^t e^{\frac{2k-n}{k}s}\int_0^se^{\frac{n-2k}{2k}z}v_\varepsilon^{\frac{2kn}{n-2k}-1}(z)\left( v_\varepsilon^2(z)\right.\\
  \\
  & & \displaystyle\left.-\left(\frac{2k}{n-2k}\right)^2\dot{v}_\varepsilon^2(z) \right)^{1-k}dzds
  \end{array}
$$
and
$$\begin{array}{rcl}
  I_2(t) & = & \displaystyle\frac{n(n-2k)^2}{(2k)^2}e^{-\frac{n-2k}{2k}t}\int_0^te^{\frac{n-2k}{2k}z}v_\varepsilon^{\frac{2kn}{n-2k}-1}(z)\left( v_\varepsilon^2(z)\right.\\
  \\
  & & \displaystyle\left.-\left(\frac{2k}{n-2k}\right)^2\dot{v}_\varepsilon^2(z) \right)^{1-k}dz.
  \end{array}
$$

Using (\ref{eq002}) and (\ref{eq004}), we get for all $t>0$ that $I_1(t)\leq c_{n,k} \varepsilon^{\frac{n+2k}{2k}} e^{\frac{n+2k}{2k}t}$ and $I_1(t)\leq c_{n,k} \varepsilon^{\frac{n+2k}{2k}} e^{\frac{n+2k}{2k}t}$. From this and (\ref{eq009}) we obtain the second inequality. The third one we obtain in analogous way.
\end{proof}

\begin{proping}\label{propo05}
For any $\varepsilon\in(0,((n-2k)/n)^{1/2k})$ and any $x\in\mathbb{R}^n\backslash\{0\}$ with $|x|\leq 1$, the Delaunay-type solution $u_\varepsilon(x)$ satisfies the estimates
$$\begin{array}{rcl}
  \left|\displaystyle u_\varepsilon(x)- \frac{\varepsilon^{\frac{n-2k}{2k}}}{2}(1+|x|^{\frac{2k-n}{k}})\right| & \leq & c_{n,k}\varepsilon^{\frac{n+2k}{2k}}|x|^{-\frac{n}{k}},\\
  
  \left|\displaystyle |x|\partial_ru_\varepsilon(x)+\frac{n-2k}{2k} \varepsilon^{\frac{n-2k}{2k}}|x|^{\frac{2k-n}{k}}\right| & \leq & c_{n,k}\varepsilon^{\frac{n+2k}{2k}}|x|^{-\frac{n}{k}}\\
  
  \left|\displaystyle |x|^2\partial_r^2u_\varepsilon(x)-\frac{(n-2k)^2}{2k^2} \varepsilon^{\frac{n-2k}{2k}}|x|^{\frac{2k-n}{k}}\right| & \leq & c_{n,k}\varepsilon^{\frac{n+2k}{2k}}|x|^{-\frac{n}{k}},
  \end{array}
$$
for some positive constant $c_{n,k}$ that depends only on $n$ and $k$.
\end{proping}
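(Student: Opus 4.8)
The plan is to carry every estimate over from Proposition~\ref{propo003} via the conformal change of variable $t=-\log|x|$, under which, by definition, $u_\varepsilon(x)=|x|^{\frac{2k-n}{2k}}\,v_\varepsilon(t)$. Since we restrict to $|x|\le 1$, we have $t\ge 0$, hence $|t|=t$ and $e^{\frac{n+2k}{2k}|t|}=|x|^{-\frac{n+2k}{2k}}$; consequently every exponential remainder $c_{n,k}\varepsilon^{\frac{n+2k}{2k}}e^{\frac{n+2k}{2k}t}$ produced by Proposition~\ref{propo003}, once multiplied by the conformal factor $|x|^{\frac{2k-n}{2k}}$, becomes $c_{n,k}\varepsilon^{\frac{n+2k}{2k}}|x|^{-n/k}$, which is exactly the right-hand side in the statement. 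So the whole content reduces to checking that the main terms transform as asserted and that the exponents add up correctly.

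For the zeroth-order estimate one uses the elementary identity $|x|^{\frac{2k-n}{2k}}\cosh\!\big(\tfrac{n-2k}{2k}(-\log|x|)\big)=\tfrac12\big(1+|x|^{\frac{2k-n}{k}}\big)$, obtained by writing $\cosh$ as a sum of two exponentials and using $e^{\frac{n-2k}{2k}t}=|x|^{-\frac{n-2k}{2k}}$; multiplying the first inequality of Proposition~\ref{propo003} by $|x|^{\frac{2k-n}{2k}}$ then yields the first estimate at once. For the derivative estimates I would differentiate $u_\varepsilon(x)=|x|^{\frac{2k-n}{2k}}v_\varepsilon(-\log|x|)$ in $r=|x|$ (using $\partial_r t=-1/r$), which gives, with $\alpha=\tfrac{2k-n}{2k}$,
\begin{equation*}
|x|\,\partial_r u_\varepsilon = -|x|^{\frac{2k-n}{2k}}\Big(\tfrac{n-2k}{2k}\,v_\varepsilon+\dot v_\varepsilon\Big),\qquad
|x|^2\partial_r^2 u_\varepsilon = |x|^{\frac{2k-n}{2k}}\Big(\alpha(\alpha-1)v_\varepsilon-(2\alpha-1)\dot v_\varepsilon+\ddot v_\varepsilon\Big).
\end{equation*}
Substituting the three approximations of Proposition~\ref{propo003} and exploiting the collapse $\cosh(\tfrac{n-2k}{2k}t)+\sinh(\tfrac{n-2k}{2k}t)=e^{\frac{n-2k}{2k}t}$ (which works because the algebraic identities $\alpha(\alpha-1)+\alpha^2=\alpha(2\alpha-1)$ and its analogue make the $\cosh$ and $\sinh$ coefficients coincide), the leading hyperbolic terms assemble into a pure exponential $\varepsilon^{\frac{n-2k}{2k}}e^{\frac{n-2k}{2k}t}$ times a numerical constant; multiplying by $|x|^{\frac{2k-n}{2k}}$ converts this into a constant multiple of $\varepsilon^{\frac{n-2k}{2k}}|x|^{\frac{2k-n}{k}}$, giving the stated leading terms. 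The remainders are in each case sums of at most three terms, every one of them bounded by $c_{n,k}\varepsilon^{\frac{n+2k}{2k}}e^{\frac{n+2k}{2k}t}$, and the conversion above turns these into $c_{n,k}\varepsilon^{\frac{n+2k}{2k}}|x|^{-n/k}$ after possibly enlarging $c_{n,k}$.

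There is no genuine analytic obstacle here: once Proposition~\ref{propo003} is available, the proof is mechanical. The only points demanding a little care are bookkeeping ones: keeping the restriction $|x|\le 1$ (equivalently $t\ge 0$) in force throughout so that $|t|=t$; tracking the exponents when $|x|^{\frac{2k-n}{2k}}$ is multiplied against $e^{\pm\frac{n-2k}{2k}t}$ and $e^{\frac{n+2k}{2k}t}$; and verifying the small algebraic cancellations that make the $\cosh$ and $\sinh$ contributions combine into a single exponential rather than remaining a genuine hyperbolic function, which is what ultimately produces clean powers of $|x|$ on the right-hand sides.
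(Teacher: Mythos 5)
Your route is the same as the paper's: the first estimate is the first inequality of Proposition \ref{propo003} multiplied by $|x|^{\frac{2k-n}{2k}}$, using $|x|^{\frac{2k-n}{2k}}\cosh\big(\tfrac{n-2k}{2k}t\big)=\tfrac{1}{2}\big(1+|x|^{\frac{2k-n}{k}}\big)$ and $|x|^{\frac{2k-n}{2k}}e^{\frac{n+2k}{2k}t}=|x|^{-\frac{n}{k}}$ for $t=-\log|x|\ge 0$, and the derivative estimates come from differentiating $u_\varepsilon=|x|^{\frac{2k-n}{2k}}v_\varepsilon(-\log|x|)$ in $r$ and substituting Proposition \ref{propo003}; the paper does exactly this, only organizing the first-derivative case through the identity $|x|\partial_r u_\varepsilon=\tfrac{2k-n}{2k}u_\varepsilon-|x|^{\frac{2k-n}{2k}}\dot v_\varepsilon$ plus the triangle inequality, and dismissing the second-derivative case as analogous. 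Your radial-derivative formulas and the conversion of every remainder into $c_{n,k}\varepsilon^{\frac{n+2k}{2k}}|x|^{-\frac{n}{k}}$ are correct.

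One caveat: you assert, without computing it, that the collapsed hyperbolic terms reproduce ``the stated leading terms''. For $|x|\partial_r u_\varepsilon$ they do, but for $|x|^2\partial_r^2u_\varepsilon$ your own formula gives the coefficient $\alpha(2\alpha-1)=\beta(2\beta+1)=\frac{(n-2k)(n-k)}{2k^2}$ with $\beta=\frac{n-2k}{2k}$ (equivalently, what one gets by differentiating the model $\frac{\varepsilon^{(n-2k)/(2k)}}{2}\big(1+|x|^{(2k-n)/k}\big)$ twice), which is not the constant $\frac{(n-2k)^2}{2k^2}$ printed in the statement, and the difference $\frac{n-2k}{2k}\varepsilon^{\frac{n-2k}{2k}}|x|^{\frac{2k-n}{k}}$ cannot be absorbed into the error term. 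This discrepancy lies in the printed statement (whose proof in the paper is only ``in analogous way''), not in your method; still, you should carry the computation through and record the constant it actually produces rather than claim agreement with the statement as written.
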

\begin{proof}
The first inequality follows from the first one in the Proposition \ref{propo003} and noting that for $t=-\log|x|\geq 0$ with $0<|x|<1$ we have $|x|^{\frac{2k-n}{2k}}e^{\frac{n+2k}{2k}|t|}= |x|^{-\frac{n}{k}}$, $u_\varepsilon(x)= |x|^{\frac{2k-n}{2k}}v_\varepsilon(-\log|x|)$ and $\varepsilon^{\frac{n-2k}{2k}}|x|^{\frac{2k-n}{2k}} \cosh\left(\frac{n-2k}{2k}t\right)= \frac{\varepsilon^{\frac{n-2k}{2k}}}{2}\left(1+|x|^{\frac{2k-n}{k}}\right)$.

For the second and third inequality, note that
$$|x|\partial_ru_\varepsilon(x)=\frac{2k-n}{2k}u_\varepsilon(x)-|x|^{\frac{2k-n}{2k}}\dot v_\varepsilon(-\log|x|),$$
%$$|x|^2\partial_r^2u_\varepsilon(x)=\frac{n(n-2k)}{4k^2}u_\varepsilon(x)+\frac{n-k}{k}|x|^{\frac{2k-n}{2k}}\dot v_\varepsilon(-\log|x|)+|x|^{\frac{2k-n}{2k}}\ddot v_\varepsilon(-\log |x|)$$
and
$$\frac{n-2k}{2k}\varepsilon^{\frac{n-2k}{2k}}|x|^{\frac{2k-n}{2k}}\sinh\left(\frac{n-2k}{2k}t\right)=\frac{n-2k}{2k}\varepsilon^{\frac{n-2k}{2k}}\frac{|x|^{\frac{2k-n}{k}}-1}{2}.$$

Therefore, again by Proposition \ref{propo003} we obtain
$$\left||x|\partial_ru_\varepsilon(x)+\frac{n-2k}{2k} \varepsilon^{\frac{n-2k}{2k}}|x|^{\frac{2k-n}{k}}\right|\leq \left|\frac{2k-n}{2k}\right|\left|u_\varepsilon(x)-\frac{\varepsilon^{\frac{n-2k}{2k}}}{2}(1+|x|^{\frac{2k-n}{k}})\right|$$
$$+\left||x|^{\frac{2k-n}{2k}}\dot v_\varepsilon(-\log|x|)-\frac{n-2k}{2k}\varepsilon^{\frac{n-2k}{2k}}|x|^{\frac{2k-n}{2k}}\sinh\left(\frac{n-2k}{2k}t\right)\right|\leq c_{n,k}\varepsilon^{\frac{n+2k}{2k}}|x|^{-\frac{n}{k}}.$$

In analogous way we get the third inequality.
\end{proof}

For our puposes, it is convenient to consider the following $(n+2)-$dimensional family of solution to (\ref{eq006}) in a small punctured ball centered at the origin
\begin{equation}\label{eq52}
u_{\varepsilon,R,a}(x):= |x-a|x|^2|^{\frac{2k-n}{2k}}v_{\varepsilon} (-2\log|x|+\log|x-a|x|^2|+\log R),
\end{equation}
where only translations along the Delaunay axis and of the ``point at infinity" are allowed (see \cite{MN}). This family of solutions comes from the fact that if $u_\varepsilon$ is a solution then the functions $R^{\frac{2-n}{2}}u_{\varepsilon}(R^{-1}x)$, $u_\varepsilon(x+b)$ and $|x|^{\frac{2k-n}{k}}u_\varepsilon(x|x|^{-2})$ are still solutions in a small punctured ball centered at the origin for any $R>0$ and $b\in\mathbb R^n$. The last function is related with the inversion $I(x)=x|x|^{-2}$ of the $\mathbb R^n\backslash\{0\}$.
\begin{coring}\label{cor02}
There exists a constant $r_0\in(0,1)$, such that for any $x$ and $a$ in $\mathbb{R}^n$ with $|x|\leq 1$, $|a||x|<r_0$, $R\in\mathbb{R}^+$, and  $\varepsilon\in(0,((n-2k)/n)^{1/2k})$ the solution $u_{\varepsilon,R,a}$ satisfies the estimate
\begin{equation}\label{eq37}
u_{\varepsilon,R,a}(x) = u_{\varepsilon,R}(x)+\left(\frac{n-2k}{k}u_{\varepsilon,R}(x)+ |x| \partial_ru_{\varepsilon,R}(x)\right)a\cdot x+O''\left(|a|^2|x|^{\frac{6k-n}{2k}}\right)
\end{equation}
and if $R\leq |x|$, the estimate
\begin{equation}\label{eq93}
\begin{array}{rcl}
 u_{\varepsilon,R,a}(x) & = &\displaystyle u_{\varepsilon,R}(x)+\left(\frac{n-2k}{k}u_{\varepsilon,R}(x)+ |x| \partial_ru_{\varepsilon,R}(x)\right)a\cdot x\\
 \\
 & & \displaystyle+O''\left(|a|^2\varepsilon^{\frac{n-2k}{2k}} R^{\frac{2k-n}{2k}}|x|^2\right).
\end{array}
\end{equation}
\end{coring}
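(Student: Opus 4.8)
The plan is to prove \eqref{eq37} and \eqref{eq93} by Taylor-expanding the defining expression \eqref{eq52} for $u_{\varepsilon,R,a}$ in the parameter $a$ around $a=0$, and controlling the first- and second-order terms using the pointwise bounds on $v_\varepsilon$ and its derivatives from Proposition \ref{propo003} (equivalently, the bounds on $u_\varepsilon$ from Proposition \ref{propo05}). First I would introduce the auxiliary function
\[
F(s) := |x-s|x|^2a|^{\frac{2k-n}{2k}} v_\varepsilon\bigl(-2\log|x| + \log|x - s|x|^2a| + \log R\bigr), \qquad s\in[0,1],
\]
so that $u_{\varepsilon,R,a}(x) = F(1)$ and $u_{\varepsilon,R}(x) = F(0)$, and expand $F(1) = F(0) + F'(0) + \int_0^1(1-s)F''(s)\,ds$. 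The condition $|a||x| < r_0$ with $r_0 < 1$ guarantees that $|x - s|x|^2 a| = |x|\,|\,\theta - s|x|a\,| \in (\tfrac12|x|, \tfrac32|x|)$ for all $s\in[0,1]$, so all quantities below are evaluated at points comparable to $|x|$ in norm and at cylindrical parameter $t = -\log|x| + O(|a||x|)$ comparable to $-\log|x|$; this is what makes every estimate uniform.

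Next I would compute $F'(0)$ explicitly. Writing $w(x) := -2\log|x| + \log|x - s|x|^2a| + \log R$, one has $\partial_s\big|_{s=0}|x - s|x|^2a| = -|x|^2\,\tfrac{a\cdot x}{|x|} = -|x|\,(a\cdot x)$, hence
\[
F'(0) = \Bigl(\tfrac{2k-n}{2k}\,|x|^{\frac{2k-n}{2k}-2}v_\varepsilon + |x|^{\frac{2k-n}{2k}-2}\dot v_\varepsilon\Bigr)\cdot\bigl(-|x|(a\cdot x)\bigr)\Big/|x| \quad\text{(schematically)},
\]
and after matching with the identities $u_{\varepsilon,R}(x) = |x|^{\frac{2k-n}{2k}}v_\varepsilon(-\log|x|+\log R)$ and $|x|\partial_r u_{\varepsilon,R}(x) = \tfrac{2k-n}{2k}u_{\varepsilon,R}(x) - |x|^{\frac{2k-n}{2k}}\dot v_\varepsilon(\cdot)$ — the same bookkeeping already used in the proof of Proposition \ref{propo05} — this collapses exactly to the stated linear term $\bigl(\tfrac{n-2k}{k}u_{\varepsilon,R}(x) + |x|\partial_r u_{\varepsilon,R}(x)\bigr)(a\cdot x)$. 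The remaining job is the second-order remainder: $F''(s)$ is a sum of terms each of which is a product of (i) a power $|x-s|x|^2a|^{\frac{2k-n}{2k}-j}$ with $j\le 2$, (ii) $v_\varepsilon$, $\dot v_\varepsilon$, or $\ddot v_\varepsilon$ evaluated at the shifted cylindrical parameter, and (iii) a factor of $|x|^2|a|^2$ coming from differentiating $|x-s|x|^2a|$ twice in $s$ (each $s$-derivative pulls down one power of $|x|^2|a|$ roughly, modulo lower-order pieces). Using Proposition \ref{propo003}, on the range $t\ge 0$ (i.e. $|x|\le 1$) we have $v_\varepsilon, \dot v_\varepsilon, \ddot v_\varepsilon = O(\varepsilon^{\frac{n-2k}{2k}}e^{\frac{n-2k}{2k}|t|}) = O(\varepsilon^{\frac{n-2k}{2k}}|x|^{-\frac{n-2k}{2k}})$, which combined with $|x-s|x|^2a|^{\frac{2k-n}{2k}-2}\sim |x|^{\frac{2k-n}{2k}-2}$ and the factor $|x|^2|a|^2$ yields $F''(s) = O(|a|^2|x|^{\frac{6k-n}{2k}})$ (one checks $\frac{2k-n}{2k}-2+2 = \frac{2k-n}{2k}$... more precisely the exponents combine to $\frac{2k-n}{2k}$ from the power times $-\frac{n-2k}{2k}$ is wrong — I would track this carefully: the net power is $\frac{2k-n}{2k}-2 + 2 - \frac{n-2k}{2k} = \frac{2k-n}{k}$, and then the $|x|^2$ restores two powers to give $\frac{6k-n}{2k}$). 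Differentiating once more in the spatial variable gives the $O''$ (derivative) statement with one fewer power, and integrating over $s\in[0,1]$ preserves the bound, giving \eqref{eq37}.

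For \eqref{eq93}, the only change is in estimating factor (ii): when $R\le |x|$ the shifted cylindrical argument $t_s := -\log|x| + \log|x-s|x|^2a| + \log R$ satisfies $t_s \le \log R - \log|x| + \log\tfrac32 \le \log\tfrac32 < \infty$ but more importantly is bounded above, so instead of the crude exponential bound I would use that $v_\varepsilon$ near and below its minimum is comparable to $\varepsilon^{\frac{n-2k}{2k}}\cosh(\tfrac{n-2k}{2k}t_s)$, and for $t_s$ in a bounded-above range $\cosh(\tfrac{n-2k}{2k}t_s)$ is controlled by $e^{-\frac{n-2k}{2k}t_s} \sim R^{\frac{2k-n}{2k}}|x|^{\frac{n-2k}{2k}}$ up to constants; feeding this into the same product structure as above, the powers of $|x|$ from factors (i) and (ii) now cancel completely, leaving $\ddot v_\varepsilon$-type terms of size $\varepsilon^{\frac{n-2k}{2k}}R^{\frac{2k-n}{2k}}$ times the $|a|^2|x|^2$ from factor (iii), i.e. $O''(|a|^2\varepsilon^{\frac{n-2k}{2k}}R^{\frac{2k-n}{2k}}|x|^2)$.

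The main obstacle I anticipate is purely bookkeeping: keeping track of which power of $|x-s|x|^2a|$ each differentiation produces, and then replacing $|x-s|x|^2a|$ by $|x|$ uniformly in $s$ — the choice of $r_0$ must be made so that the comparison $\tfrac12|x|\le |x-s|x|^2a|\le \tfrac32|x|$ holds simultaneously for all $s\in[0,1]$ and all admissible $a$, and so that the cylindrical argument stays in a regime where Proposition \ref{propo003} applies with uniform constants. There is no analytic subtlety beyond that; the estimates from Propositions \ref{propo003} and \ref{propo05} are exactly tailored to this computation, and the result is a routine (if lengthy) second-order Taylor expansion with explicit remainder control.
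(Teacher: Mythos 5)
Your overall strategy is the same as the paper's: Taylor-expand $u_{\varepsilon,R,a}$ from (\ref{eq52}) in the parameter $a$, identify the linear term with $\frac{n-2k}{k}u_{\varepsilon,R}+|x|\partial_ru_{\varepsilon,R}$ via the identity $|x|\partial_ru_{\varepsilon,R}(x)=\frac{2k-n}{2k}u_{\varepsilon,R}(x)-|x|^{\frac{2k-n}{2k}}\dot v_\varepsilon(-\log|x|+\log R)$, and bound the quadratic remainder; the paper expands the two factors $|x-a|x|^2|^{\frac{2k-n}{2k}}$ and $v_\varepsilon(\cdot)$ separately and multiplies, while you expand the product at once through $F(s)$ with an integral remainder, a purely cosmetic difference. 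Your computation of the first-order term is correct.

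The gap is in your control of the remainder. You identify the cylindrical argument with $-\log|x|$ (``$t\ge 0$, i.e.\ $|x|\le 1$''), but it is $-\log|x|+\log R$ up to an $O(|a||x|)$ shift, and $R\in\mathbb{R}^+$ is arbitrary in (\ref{eq37}); more importantly, Proposition \ref{propo003} does not give $v_\varepsilon,\dot v_\varepsilon,\ddot v_\varepsilon=O(\varepsilon^{\frac{n-2k}{2k}}e^{\frac{n-2k}{2k}|t|})$, since its error term $\varepsilon^{\frac{n+2k}{2k}}e^{\frac{n+2k}{2k}|t|}$ dominates that quantity whenever $\varepsilon e^{|t|}$ is large (e.g.\ $R\ll\varepsilon|x|$, which is allowed in both estimates). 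Your exponent tally for (\ref{eq37}) also does not close: $\frac{2k-n}{k}+2=\frac{8k-2n}{2k}\neq\frac{6k-n}{2k}$. The ingredients that actually make the remainder work, and that the paper uses, are simpler: $0<v_\varepsilon<1$ together with $|\dot v_\varepsilon|\le c_{n,k}\,v_\varepsilon$ and $|\ddot v_\varepsilon|\le c_{n,k}\,v_\varepsilon$, which follow from the ODE (\ref{eq001}) and the positivity of $h_\varepsilon$ in (\ref{eq033}), not from Proposition \ref{propo003}. With these, the prefactor $|x|^{\frac{2k-n}{2k}}$ times the $O''(|a|^2|x|^2)$ produced by two $s$-derivatives gives (\ref{eq37}) directly; and when $R\le|x|$ one replaces the crude bound $v_\varepsilon\le 1$ by (\ref{eq002}), namely $v_\varepsilon\le\varepsilon^{\frac{n-2k}{2k}}\cosh\left(\frac{n-2k}{2k}t\right)\le c\,\varepsilon^{\frac{n-2k}{2k}}R^{\frac{2k-n}{2k}}|x|^{\frac{n-2k}{2k}}$ (here $t\le\log\frac32$ uniformly in $s$, and only the upper bound is true or needed, not ``comparability''), which yields (\ref{eq93}). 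Once you substitute these bounds for the appeal to Proposition \ref{propo003} and redo the exponent bookkeeping, your argument coincides with the paper's proof.
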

\begin{proof} First note that
\begin{equation}\label{eq035}
 |x-a|x|^2|^{\frac{2k-n}{2k}}=\displaystyle|x|^{\frac{2k-n}{2k}}+\frac{n-2k}{2k}a\cdot x|x|^{\frac{2k-n}{2k}}+O''(|a|^2|x|^{\frac{6k-n}{2k}})
\end{equation}
and
$$\log\left|\displaystyle\frac{x}{|x|}-a|x|\right|=-a\cdot x+O''(|a|^2|x|^2),$$
for $|a||x|<r_0$ and some $r_0\in(0,1)$. Using the Taylor's expansion we obtain that
\begin{equation}\label{eq034}
 \begin{array}{l}
v_\varepsilon\left(-\log |x|+\log\left|\displaystyle \frac{x}{|x|}-a|x|\right|+\log R\right)  =  v_{\varepsilon}(-\log |x|+\log R)\\
\hspace{2cm} -   v_\varepsilon'(-\log|x|+\log R)a\cdot x+  v_\varepsilon'(-\log|x|+\log R)O''(|a|^2|x|^2)\\
\\
\hspace{2cm} +  v_\varepsilon''(-\log|x|+\log R+t_{a,x})O''(|a|^2|x|^2)
\end{array}
\end{equation}
for some $t_{a,x}\in\mathbb{R}$ with $0<|t_{a,x}|<\left|\log\left|\frac{x}{|x|}-a|x|\right|\right|$. Observe that $t_{a,x}\rightarrow 0$ when $|a||x|\rightarrow 0$. From (\ref{eq001}) and (\ref{eq033}) we obtain $|v'_\varepsilon|\leq c_{n,k}v_\varepsilon$,  $|v''_\varepsilon|\leq c_{n,k}v_\varepsilon$. Then, multiplying (\ref{eq035}) by (\ref{eq034}), we get (\ref{eq37}). 

For the second equality, note that if $R\leq |x|$, then $-\log|x|+\log R\leq 0$. Therefore, the result follows by (\ref{eq002}) and
$$|x|\partial_ru_{\varepsilon,R}(x)= \frac{2k-n}{2k}u_{\varepsilon,R}(x)- |x|^{\frac{2k-n}{2k}}v_\varepsilon'(-\log|x|+\log R).$$
\end{proof}
\begin{coring}\label{cor04}
For any $\varepsilon\in(0,((n-2k)/n)^{1/2k})$ and any $x$ in $\mathbb{R}^n$ with $|x|\leq 1$, the function $u_{\varepsilon,R}$ satisfies the estimates
$$ u_{\varepsilon,R}(x) = \displaystyle\frac{\varepsilon^{\frac{n-2k}{2k}}}{2} \left(R^{\frac{2k-n}{2k}}+ R^{\frac{n-2k}{2k}}|x|^{\frac{2k-n}{k}}\right)+ O''(R^{\frac{n+2k}{2k}} \varepsilon^{\frac{n+2k}{2k}}|x|^{-\frac{n}{k}}),$$
$$|x|\partial_ru_{\varepsilon,R}(x)=\frac{2k-n}{2k} \varepsilon^{\frac{n-2k}{2k}} R^{\frac{n-2k}{2k}} |x|^{\frac{2k-n}{k}}+ O'(R^{\frac{n+2k}{2k}} \varepsilon^{\frac{n+2k}{2k}}|x|^{-\frac{n}{k}})$$
and
$$|x|^2\partial_r^2u_{\varepsilon,R}(x)= \frac{(n-2k)^2}{2k^2}\varepsilon^{\frac{n-2k}{2k}} R^{\frac{n-2k}{2k}} |x|^{\frac{2k-n}{k}}+ O(R^{\frac{n+2k}{2k}} \varepsilon^{\frac{n+2k}{2k}}|x|^{-\frac{n}{k}}).$$
\end{coring}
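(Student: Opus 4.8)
The plan is to deduce all three estimates from those already established for the one-parameter Delaunay family $u_\varepsilon$. Setting $a=0$ in (\ref{eq52}) and recalling that $u_\varepsilon(y)=|y|^{\frac{2k-n}{2k}}v_\varepsilon(-\log|y|)$, one sees that
$$u_{\varepsilon,R}(x)=|x|^{\frac{2k-n}{2k}}v_\varepsilon(\log R-\log|x|)=R^{\frac{2k-n}{2k}}u_\varepsilon(R^{-1}x),$$
so $u_{\varepsilon,R}$ is merely a rescaling of $u_\varepsilon$. Hence each asserted estimate will follow either by substituting $t=\log(R/|x|)$ into the corresponding line of Proposition~\ref{propo003}, or, more economically, by evaluating the corresponding line of Proposition~\ref{propo05} at the point $R^{-1}x$ and multiplying through by $R^{\frac{2k-n}{2k}}$.

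For the bound on $u_{\varepsilon,R}$ itself I would set $t=\log(R/|x|)$ and use the first line of Proposition~\ref{propo003} to write $u_{\varepsilon,R}(x)=|x|^{\frac{2k-n}{2k}}\varepsilon^{\frac{n-2k}{2k}}\cosh(\frac{n-2k}{2k}t)+O(|x|^{\frac{2k-n}{2k}}\varepsilon^{\frac{n+2k}{2k}}e^{\frac{n+2k}{2k}|t|})$; expanding the hyperbolic cosine and using $e^{\pm\frac{n-2k}{2k}t}=(R/|x|)^{\pm\frac{n-2k}{2k}}$ turns the leading term into exactly $\frac{\varepsilon^{(n-2k)/2k}}{2}(R^{(2k-n)/2k}+R^{(n-2k)/2k}|x|^{(2k-n)/k})$, while in the remainder the factor $|x|^{\frac{2k-n}{2k}}e^{\frac{n+2k}{2k}|t|}$ collapses to $R^{(n+2k)/2k}|x|^{-n/k}$ once one uses $|t|=t$ in the relevant regime; the gradient information recorded by the $O''$ notation comes from differentiating the same expansion. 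For the radial derivatives I would differentiate the identity $u_{\varepsilon,R}(x)=|x|^{\frac{2k-n}{2k}}v_\varepsilon(\log R-\log|x|)$ in $r=|x|$, which yields the algebraic relation
$$|x|\partial_r u_{\varepsilon,R}=\frac{2k-n}{2k}u_{\varepsilon,R}-|x|^{\frac{2k-n}{2k}}\dot v_\varepsilon(\log R-\log|x|)$$
(already used in the proof of Corollary~\ref{cor02}) together with an analogous second-order relation expressing $|x|^2\partial_r^2 u_{\varepsilon,R}$ through $u_{\varepsilon,R}$, $\dot v_\varepsilon$ and $\ddot v_\varepsilon$. Inserting the second and third lines of Proposition~\ref{propo003} and the estimate for $u_{\varepsilon,R}$ just obtained into these relations, the main terms combine --- via $\sinh(\frac{n-2k}{2k}t)$ and the equation satisfied by $\cosh(\frac{n-2k}{2k}t)$ --- into the claimed leading behaviours $\frac{2k-n}{2k}\varepsilon^{(n-2k)/2k}R^{(n-2k)/2k}|x|^{(2k-n)/k}$ and $\frac{(n-2k)^2}{2k^2}\varepsilon^{(n-2k)/2k}R^{(n-2k)/2k}|x|^{(2k-n)/k}$, with all the errors of size $R^{(n+2k)/2k}\varepsilon^{(n+2k)/2k}|x|^{-n/k}$ as above. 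Equivalently, these two estimates are the second and third lines of Proposition~\ref{propo05} applied at $R^{-1}x$ and rescaled.

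I do not expect any genuine difficulty in carrying this out; the only place requiring care --- and the likeliest source of sign or exponent slips --- is the bookkeeping of the powers of $|x|$, $R$ and $\varepsilon$ under the rescaling $x\mapsto R^{-1}x$, the decisive cancellations being $\frac{2k-n}{2k}-\frac{2k-n}{k}=\frac{n-2k}{2k}$ and $\frac{2k-n}{2k}+\frac{n}{k}=\frac{n+2k}{2k}$. One should also note that the stated form of the remainders is the one valid in the range $|x|\le R$, that is, where $t=\log(R/|x|)\ge 0$ and $R^{-1}x$ lies in the unit ball so that Proposition~\ref{propo05} applies directly.
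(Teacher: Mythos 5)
Your proposal is correct and follows essentially the same route as the paper, whose entire proof is the one-line remark that the estimates follow directly from Proposition \ref{propo05}, i.e.\ exactly your rescaling $u_{\varepsilon,R}(x)=R^{\frac{2k-n}{2k}}u_\varepsilon(R^{-1}x)$ (equivalently, substituting $t=\log(R/|x|)$ into Proposition \ref{propo003}). The caveat you add at the end is well taken: the stated form of the remainder is only justified when $R^{-1}x$ stays in the unit ball, i.e.\ for $|x|\le R$, a restriction that the paper's statement and proof pass over in silence.
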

\begin{proof} Directly by the Proposition \ref{propo05}.
\end{proof}
\subsection{Function spaces}\label{sec09}
In this section we define some function spaces that we use in this work. This spaces has appeared in \cite{M}, \cite{MR2194146}, \cite{MR1712628} and \cite{MR2639545}. See these works for more details.
\begin{definition}\label{def1}
For each $k\in\mathbb{N}$, $r>0$, $0<\alpha<1$ and $\sigma\in(0,r/2)$, let $u\in C^k(B_r(0)\backslash\{0\})$, set
$$\|u\|_{(k,\alpha),[\sigma,2\sigma]}=\sup_{|x|\in[\sigma,2\sigma]}\left(\sum_{j=0}^{k}\sigma^j |\nabla^ju(x)|\right)+\sigma^{k+\alpha}\sup_{|x|,|y|\in[\sigma,2\sigma]} \frac{|\nabla^ku(x)-\nabla^ku(y)|}{|x-y|^{\alpha}}.$$
Then, for any $\mu\in\mathbb{R}$, the space $C^{k,\alpha}_{\mu}(B_r(0)\backslash\{0\})$ is the collection of functions $u$ that are locally in $C^{k,\alpha}(B_r(0)\backslash\{0\})$ and for which the norm
$$\|u\|_{(k,\alpha),\mu,r}=\sup_{0<\sigma\leq\frac{r}{2}}\sigma^{-\mu} \|u\|_{(k,\alpha),[\sigma,2\sigma]}$$
is finite.
\end{definition}

Note that $C_{\mu}^{k,\alpha}\subseteq C_{\delta}^{l,\alpha}$ if $\mu\geq\delta$ and $k\geq l$, and $\|u\|_{(l,\alpha),\delta}\leq C\|u\|_{(k,\alpha),\mu}$ for all $u\in C_{\mu}^{k,\alpha}$. 
\begin{definition}\label{def2}
For each $k\in\mathbb{N}$, $0<\alpha<1$ and $r>0$. The space $C^{k,\alpha}(\mathbb{S}^{n-1}_r)$ is the collection of functions $\phi\in C^k(\mathbb{S}^{n-1}_r)$ for which the norm 
$$\|\phi\|_{(k,\alpha),r}:=\|\phi(r\cdot)\|_{C^{k,\alpha}(\mathbb{S}^{n-1})}.$$
is finite.
\end{definition}

We often will write 
$$C^{k,\alpha}(\mathbb{S}_r^{n-1})^{\perp}:=\left\{\phi\in C^{k,\alpha}(\mathbb{S}_r^{n-1});\;\pi''_r(\phi)=\phi\right\},$$
$$C^{k,\alpha}_\mu(B_r(0)\backslash\{0\})^{\perp}:=\left\{u\in C^{k,\alpha}_\mu(B_r(0) \backslash\{0\});\;\pi''_s(u(s\cdot)) = u(s\cdot), \forall s\in(0,r)\right\}.$$
and
$$C_{{\mu}}^{k,\alpha}(B_r(0)\backslash\{0\})^\top :=\left\{u\in C_{{\mu}}^{k,\alpha}(B_r(0)\backslash\{0\}); \pi_s''(u(s\cdot))=0, \forall s\in(0,r)\right\}.$$

Next, consider $(M,g)$ an $n-$dimensional compact Riemannian manifold and $\Psi:B_{r_1}(0)\rightarrow M$ some coordinate system on $M$ centered at some point $p\in M$, where $B_{r_1}(0)\subset\mathbb{R}^n$ is the ball of radius $r_1>0$ centered in the origin. For $0<r<s\leq r_1$ define $M_r:=M\backslash\Psi(B_r(0))$ and $\Omega_{r,s}:=\Psi(A_{r,s}),$ where $A_{r,s}:=\{x\in\mathbb{R}^n;r\leq|x|\leq s\}$.
\begin{definition}\label{def4}
For all $k\in\mathbb{N}$, $\alpha\in(0,1)$, $0<r<s\leq r_1$ and $\mu\in\mathbb{R}$, the spaces $C_\mu^{k,\alpha}(\Omega_{r,s})$ and $C_\mu^{k,\alpha}(M_r)$ are the spaces of functions $v\in C_{loc}^{k,\alpha}(M\backslash\{p\})$ for which the following norms 
$$\|v\|_{C^{k,\alpha}_\mu(\Omega_{r,s})}:=\sup_{r\leq \sigma\leq\frac{s}{2}}\sigma^{-\mu}\|v\circ \Psi\|_{(k,\alpha),[\sigma,2\sigma]}$$
and
$$\|v\|_{C^{k,\alpha}_\mu(M_r)}:=\|v\|_{C^{k,\alpha}(M_{\frac{1}{2}r_1})}+ \|v\|_{C^{k,\alpha}_\mu(\Omega_{r,r_1})},$$
respectively, are finite.
\end{definition}
\subsection{The linearized operator about the Delaunay-type solutions}\label{sec01}

Since we will need of the inverse of the linearized operator, we start this section recalling the expression for the linearized operator about the Delaunay-type solution $v_\varepsilon$ and a proposition from \cite{MN} which gives a right inverse for it.
\begin{lemma}[Mazzieri-Ndiaye \cite{MN}]\label{lem000}
In the cylinder $\mathbb S^{n-1}\times \mathbb R$ the linearization of the operator defined in (\ref{eq057}) about the Delaunay-type solution $v_\varepsilon$ is given by
$$\begin{array}{ccl}
   L_{g_{cyl}}^{v_\varepsilon}(w) & = & C_{n,k}v_\varepsilon h_\varepsilon^{k-1}\left(\dfrac{\partial^2}{\partial t^2}+a_\varepsilon\Delta_\theta+b_\varepsilon\dfrac{\partial}{\partial t}+c_\varepsilon\right)w\\
   \\
   & = &C_{n,k}v_\varepsilon h_\varepsilon^{\frac{k-1}{2}}\mathcal L_\varepsilon(h_\varepsilon^{\frac{k-1}{2}}w),
  \end{array}
$$
where $\Delta_\theta$ is the Laplace-Beltrami operator for standard round metric on the unit sphere,
$$\begin{array}{ccl}
   \mathcal L_\varepsilon & = & \dfrac{\partial^2}{\partial t^2}+a_\varepsilon\Delta_\theta+c_\varepsilon+d_\varepsilon, 
   \\
  a_\varepsilon & = & 1-\dfrac{n(k-1)}{k(n-1)}\dfrac{H_\varepsilon}{h_\varepsilon^k}=\dfrac{n-k}{k(n-1)}+\dfrac{v_\varepsilon^{\frac{2kn}{n-2k}}}{h_\varepsilon^k},\\
  \\
  b_\varepsilon & = & -\left(\dfrac{2(n-k)}{n-2k}-\dfrac{2k(n-1)}{n-2k}a_\varepsilon\right)\dfrac{\dot v_\varepsilon}{v_\varepsilon}=(k-1)\dfrac{\dot h_\varepsilon}{h_\varepsilon},\\
  \\
  c_\varepsilon & = & -\dfrac{(n-1)(n-2k)}{2k}a_\varepsilon+\dfrac{n-2k}{2k}+\dfrac{\ddot{v}_\varepsilon}{v_\varepsilon}+\dfrac{n^2}{2k}h^{1-k}v_\varepsilon^{\frac{2kn}{n-2k}-2},\\
  \\
  d_\varepsilon & = & -\dfrac{k-1}{2}\dfrac{\partial^2}{\partial t^2}\log h_\varepsilon-\left(\dfrac{k-1}{2}\right)^2\left(\dfrac{\partial }{\partial t}\log h_\varepsilon\right)^2,%\\
  \end{array}
$$
$H_\varepsilon:=H(v_\varepsilon,\dot v_\varepsilon)$ is the Hamiltonian energy, $h_\varepsilon$ is defined in (\ref{eq033}) and the constant $C_{n,k}$ depends only on $n$ and $k$.
\end{lemma}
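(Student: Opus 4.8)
The plan is to obtain the stated formula by linearizing the operator $H_{g_{cyl}}$ of \eqref{eq057} at the Delaunay solution $v_\varepsilon$ directly, exploiting that $g_{cyl}=d\theta^2+dt^2$ is a Riemannian product and that $v_\varepsilon$ depends on $t$ alone. First I would record the geometry of the cylinder: $\Delta_{g_{cyl}}=\partial_t^2+\Delta_\theta$, $R_{g_{cyl}}=(n-1)(n-2)$, $Ric_{g_{cyl}}=(n-2)g_{\mathbb S^{n-1}}$ (so $Ric_{g_{cyl}}$ vanishes in the $\partial_t$ direction), and $A_{g_{cyl}}=g_{\mathbb S^{n-1}}-\tfrac12 g_{cyl}$. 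Since the $t$--lines are geodesics and $v_\varepsilon=v_\varepsilon(t)$, we have $\nabla_{g_{cyl}}v_\varepsilon=\dot v_\varepsilon\,dt$ and $\nabla^2_{g_{cyl}}v_\varepsilon=\ddot v_\varepsilon\,dt^2$, hence $|\nabla v_\varepsilon|^2=\dot v_\varepsilon^2$, $|\nabla^2 v_\varepsilon|^2=\ddot v_\varepsilon^2$, $\langle Ric_{g_{cyl}},\nabla^2 v_\varepsilon\rangle=0$ and $\langle\nabla v_\varepsilon\otimes\nabla v_\varepsilon,\nabla^2 v_\varepsilon\rangle=\dot v_\varepsilon^2\ddot v_\varepsilon$. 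Because the whole construction is invariant under the isometric $O(n)$--action on the $\mathbb S^{n-1}$ factor, the linearized operator commutes with that action; being second order, it must therefore have the form $L_{g_{cyl}}^{v_\varepsilon}(w)=P(t)\,\partial_t^2 w+Q(t)\,\Delta_\theta w+S(t)\,\partial_t w+T(t)\,w$, with no cross term and no first-order angular term.

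Next I would compute the coefficients $P,Q,S,T$. For $k=2$ this is a direct substitution of $u=v_\varepsilon$ and the geometric data above into the explicit formula \eqref{eq037}. For general $k$ I would instead use the conformal covariance \eqref{eq024} to reduce to linearizing $u\mapsto\sigma_k\!\left(A_{u^{4k/(n-2k)}\tilde g_\varepsilon}\right)$ at $u\equiv 1$, where $\tilde g_\varepsilon:=v_\varepsilon^{4k/(n-2k)}g_{cyl}$ is the Delaunay metric itself, and invoke the standard fact that the linearization of $\sigma_k$ of the Schouten tensor under a conformal change is, up to a positive constant multiple and lower-order terms, $-\mathrm{div}\!\left(T_{k-1}(A)\nabla\,\cdot\right)$, with $T_{k-1}$ the $(k-1)$-th Newton transformation of $A$. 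Since $A_{\tilde g_\varepsilon}$ has exactly two eigenvalues, $\lambda_t$ (simple, in the cylinder direction) and $\lambda_\theta$ (of multiplicity $n-1$, in the sphere directions), $T_{k-1}(A_{\tilde g_\varepsilon})$ is diagonal with $t$--eigenvalue $\binom{n-1}{k-1}\lambda_\theta^{k-1}$ and $\theta$--eigenvalue $\binom{n-2}{k-1}\lambda_\theta^{k-1}+\binom{n-2}{k-2}\lambda_t\lambda_\theta^{k-2}$. A short computation from the Schouten transformation law gives $\lambda_\theta=\tfrac12\,v_\varepsilon^{-4k/(n-2k)-2}\,h_\varepsilon$ with $h_\varepsilon$ as in \eqref{eq033}, which produces the factor $h_\varepsilon^{k-1}$ in the $t$--eigenvalue; transferring back to $g_{cyl}$ via \eqref{eq024} accounts for the prefactor $v_\varepsilon$, so that $P(t)=C_{n,k}v_\varepsilon h_\varepsilon^{k-1}$ and $Q(t)/P(t)=a_\varepsilon$.

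The main obstacle is putting $a_\varepsilon$, $b_\varepsilon:=S/P$ and $c_\varepsilon:=T/P$ into the displayed closed forms. Here I would repeatedly use the Delaunay ODE \eqref{eq001} and the conservation of the Hamiltonian \eqref{002}, i.e. $H_\varepsilon=h_\varepsilon^k-v_\varepsilon^{2kn/(n-2k)}$, through the derived identities $v_\varepsilon-\bigl(\tfrac{2k}{n-2k}\bigr)^2\ddot v_\varepsilon=\tfrac{n}{n-2k}v_\varepsilon^{\frac{2kn}{n-2k}-1}h_\varepsilon^{1-k}$ and $\dot h_\varepsilon=2\dot v_\varepsilon\bigl(v_\varepsilon-(\tfrac{2k}{n-2k})^2\ddot v_\varepsilon\bigr)$, which let one eliminate $\ddot v_\varepsilon$ systematically: the ratio of Newton-tensor eigenvalues becomes $a_\varepsilon=1-\tfrac{n(k-1)}{k(n-1)}H_\varepsilon h_\varepsilon^{-k}$, the first-order coefficient becomes $b_\varepsilon=(k-1)\dot h_\varepsilon/h_\varepsilon$ (and, after substituting the expression for $\dot h_\varepsilon$, also equals $-\bigl(\tfrac{2(n-k)}{n-2k}-\tfrac{2k(n-1)}{n-2k}a_\varepsilon\bigr)\dot v_\varepsilon/v_\varepsilon$), and the zeroth-order coefficient $c_\varepsilon$ — the heaviest, since it gathers the $\sigma_k(A_{g_{cyl}})$--, $R_{g_{cyl}}$--, $Ric_{g_{cyl}}$-- and pure-power contributions — collapses, after the same substitutions, to $-\tfrac{(n-1)(n-2k)}{2k}a_\varepsilon+\tfrac{n-2k}{2k}+\tfrac{\ddot v_\varepsilon}{v_\varepsilon}+\tfrac{n^2}{2k}h_\varepsilon^{1-k}v_\varepsilon^{\frac{2kn}{n-2k}-2}$.

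Finally I would carry out the self-adjointizing conjugation giving the second displayed identity. Since $b_\varepsilon=(k-1)\dot h_\varepsilon/h_\varepsilon=2\,\partial_t\log h_\varepsilon^{(k-1)/2}$, the substitution $w=h_\varepsilon^{-(k-1)/2}u$ annihilates the first-order term: a direct computation shows $(\partial_t^2+b_\varepsilon\partial_t)\bigl(h_\varepsilon^{-(k-1)/2}u\bigr)=h_\varepsilon^{-(k-1)/2}\bigl(\partial_t^2 u+d_\varepsilon u\bigr)$ with $d_\varepsilon=-\tfrac{k-1}{2}\partial_t^2\log h_\varepsilon-\bigl(\tfrac{k-1}{2}\bigr)^2(\partial_t\log h_\varepsilon)^2$. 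As $h_\varepsilon$ depends only on $t$, multiplication by $h_\varepsilon^{\pm(k-1)/2}$ commutes with $\Delta_\theta$ and with multiplication by $c_\varepsilon(t)$, so $(\partial_t^2+b_\varepsilon\partial_t+a_\varepsilon\Delta_\theta+c_\varepsilon)w=h_\varepsilon^{-(k-1)/2}\,\mathcal L_\varepsilon\bigl(h_\varepsilon^{(k-1)/2}w\bigr)$ with $\mathcal L_\varepsilon=\partial_t^2+a_\varepsilon\Delta_\theta+c_\varepsilon+d_\varepsilon$; multiplying through by $C_{n,k}v_\varepsilon h_\varepsilon^{k-1}$ produces both expressions for $L_{g_{cyl}}^{v_\varepsilon}(w)$ asserted in the Lemma.
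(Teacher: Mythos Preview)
The paper does not give its own proof of this lemma: it is quoted verbatim from Mazzieri--Ndiaye \cite{MN}, with the attribution in the lemma header, and the text moves on immediately afterward. So there is nothing in the paper to compare your argument against.

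That said, your outline is a sound way to establish the formula. The reduction by $O(n)$--symmetry to an operator of the form $P\partial_t^2+Q\Delta_\theta+S\partial_t+T$, the identification of $P$ and $Q/P$ via the Newton transformation $T_{k-1}(A_{\tilde g_\varepsilon})$ of a Schouten tensor with one simple and one $(n-1)$--fold eigenvalue, and the use of \eqref{eq001}--\eqref{002} to rewrite the coefficients are exactly the ingredients one needs; your verification of the conjugation step is correct (with $b_\varepsilon=2\,\partial_t\log h_\varepsilon^{(k-1)/2}$, the substitution $w=h_\varepsilon^{-(k-1)/2}u$ kills the first-order term and produces $d_\varepsilon$ as stated). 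The only place I would ask you to be a bit more explicit is the zeroth-order coefficient $c_\varepsilon$: the contributions from the non-divergence part of the linearization of $\sigma_k$ (the term $-\sigma_k(A)\,\mathrm{tr}$ appearing alongside $-\mathrm{div}(T_{k-1}\nabla\cdot)$), from the $\sigma_k(A_{g_{cyl}})$ term in \eqref{eq057}, and from differentiating the pure power $u^{2kn/(n-2k)}$ all land there, and it is worth writing out how the Delaunay ODE collapses their sum to the displayed four-term expression.
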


For the next proposition we set
\begin{equation*}\label{eq038}
 \delta_{n,k}:=\sqrt{\frac{2n(n-k)}{k(n-1)}+ \left(\frac{n-2k}{2k}\right)^2}.
\end{equation*}
Note that $\delta_{n,k}+1-n/2k<2$ for all $k>1$. Define $D_1=(0,+\infty)\times\mathbb S^{n}$.

\begin{proping}[Mazzieri-Ndiaye \cite{MN}]\label{propo001}
 Let $R>0$, $\gamma\in\left(-\delta_{n,k},\delta_{n,k}\right)$ and $\overline\gamma>n/2$. There exists a positive real number $\varepsilon_0=\varepsilon_0(\gamma,\overline \gamma,n,k,\beta)>0$ such that for every $\varepsilon\in(0,\varepsilon_0]$, the bounded linear operator
 $$\mathcal L_{\varepsilon,R}:[C_\gamma^{2,\beta}(D_1)^\perp\oplus C_{\overline \gamma}^{2,\beta}(D_1)^\top\oplus \mathcal W_\varepsilon(D_1)]_0\rightarrow C_\gamma^{0,\beta}(D_1)^\perp\oplus C_{\overline\gamma}^{0,\beta}(D_1)^\top$$
 is an isomorphism, where $\mathcal W_\varepsilon(D_1)$ is a finite vetorial space called the deficiency space, which is generated by the Jacobi fields. Moreover, if $w\in C_\gamma^{2,\beta}(D_1)^\perp\oplus C_{\overline \gamma}^{2,\beta}(D_1)^\top\oplus \mathcal W_\varepsilon(D_1)$ and $f\in C_\gamma^{0,\beta}(D_1)^\perp\oplus C_{\overline\gamma}^{0,\beta}(D_1)^\top$ verify $\mathcal L_{\varepsilon,R}w=f$, and, with the notations introduced above, we decompose $w$ and $f$ as
 $$w=w^\perp+w^\top+h_{\varepsilon}^{\frac{k-1}{2}}\sum_{j=0}^na_j\Psi_\varepsilon^{j,+}\;\;\;\mbox{and}\;\;\;f=f^\perp+f^\top,$$
 then we have that there exists a positive constant $C=C(\gamma,\overline\gamma,n,k,\beta)>0$ such that, for every $\varepsilon\in(0,\varepsilon_0]$,
 $$\|w^\perp\|_{C^{2,\beta}_\gamma(D_1)}\leq C\|f^\perp\|_{C^{0,\beta}_\gamma(D_1)},$$
 $$\|w^\top\|_{C^{2,\beta}_{\overline\gamma}(D_1)}\leq C\|f^\top\|_{C^{0,\beta}_{\hat\gamma}(D_1)}$$
 and 
 $$\varepsilon^{\frac{n-2k}{2}}\sum_{j=0}^n|a_j|\leq C\|\hat f\|_{C^{0,\beta}_{\hat\gamma}(D_1)}.$$
\begin{proof}
 A carefully reading of the proof in \cite{MN} we see that the constant $C$ does not depend on $R$.
\end{proof}
\end{proping}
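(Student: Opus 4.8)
The statement is precisely the one proved in \cite{MN}, except for the assertion that the threshold $\varepsilon_0$ and the constant $C$ do not depend on $R\in\mathbb R^{+}$; so the plan is to isolate the role of $R$ and check that it is nothing but that of an axial translation. First I would record how $R$ enters. By the discussion in Section~\ref{sec08}, the Delaunay solution with parameter $R$ comes from the fact that $R^{(2-n)/2}u_\varepsilon(R^{-1}x)$ again solves~(\ref{eq006}); passing to the cylindrical variable $t=-\log|x|$ as in~(\ref{eq52}) with $a=0$, this transformation is exactly the translation $t\mapsto t+\log R$, so $v_{\varepsilon,R}(t)=v_\varepsilon(t+\log R)$ and hence $h_{\varepsilon,R}(t)=h_\varepsilon(t+\log R)$, with $h_\varepsilon$ as in~(\ref{eq033}). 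Since, by Lemma~\ref{lem000}, the coefficients $a_{\varepsilon},c_{\varepsilon},d_{\varepsilon}$ of $\mathcal L_\varepsilon$ are local functionals of $v_\varepsilon$, $h_\varepsilon$ and their $t$-derivatives, the operator $\mathcal L_{\varepsilon,R}$ built from $v_{\varepsilon,R}$ is the $\log R$-translate of the one built from $v_\varepsilon$; writing $T$ for the pullback by the axial translation through $\log R$ (the sign is immaterial), this reads $\mathcal L_{\varepsilon,R}=T^{-1}\circ\mathcal L_\varepsilon\circ T$, with $\mathcal L_\varepsilon$ the operator of Lemma~\ref{lem000}, i.e. the case $R=1$, for which \cite{MN} already supplies the proposition together with its three estimates.

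Next I would track how the data of the statement transform under $T$. The weights defining $C^{2,\beta}_\gamma(D_1)^\perp$, $C^{2,\beta}_{\overline\gamma}(D_1)^\top$ and their $C^{0,\beta}$ targets are fixed powers of $e^{t}$ (equivalently powers of $|x|$), and the high/low frequency projections $\pi''_s$ are defined fibrewise on each $\mathbb S^{n-1}_s$, hence commute with the axial translation; therefore $T$ acts on each of these spaces by multiplying the norm by a fixed power of $R$ ($R^{\gamma}$, resp. $R^{\overline\gamma}$), with comparability constants depending only on $\gamma,\overline\gamma,n,k,\beta$. The deficiency space $\mathcal W_\varepsilon(D_1)$ is spanned by the Jacobi fields $h_\varepsilon^{(k-1)/2}\Psi_\varepsilon^{j,\pm}$, again local functionals of $v_\varepsilon$, so $T$ carries $\mathcal W_\varepsilon$ onto the corresponding deficiency space for $\mathcal L_{\varepsilon,R}$ and, crucially, leaves the normalization $\varepsilon^{(n-2k)/2}$ untouched, since $\varepsilon$ does not move. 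Feeding $\mathcal L_{\varepsilon,R}w=f$ through the conjugation, applying the $R=1$ estimates of \cite{MN} to $\mathcal L_\varepsilon(Tw)=Tf$, and undoing $T$, the powers of $R$ produced on the two sides of each of the three inequalities cancel identically, leaving the bound with the constant $C=C(\gamma,\overline\gamma,n,k,\beta)$ of \cite{MN}; the same reasoning shows $\varepsilon_0$ may be taken independent of $R$, since every smallness requirement on $\varepsilon$ is a condition on $v_\varepsilon$ that axial translation preserves. This is exactly the content of the one-line argument indicated in \cite{MN}.

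I expect the delicate point to be the bookkeeping on the half-cylinder $D_1=(0,+\infty)\times\mathbb S^{n-1}$: the translation $T$ does not preserve it, so one must verify that the isomorphism problems for $\mathcal L_{\varepsilon,R}$ and for $\mathcal L_\varepsilon$ are genuinely intertwined — matching the Dirichlet condition at $t=0$ encoded in $[\,\cdot\,]_0$, the decompositions $w=w^\perp+w^\top+h_\varepsilon^{(k-1)/2}\sum_{j}a_j\Psi_\varepsilon^{j,+}$ and $f=f^\perp+f^\top$, and the comparability constants for the weighted norms, all uniformly in the translation amount. If one prefers to avoid this, the robust route — the one effectively taken in \cite{MN} — is a line-by-line inspection of that proof: its only quantitative inputs are the indicial roots $\pm\delta_{n,k}$ of the constant-coefficient model of $\mathcal L_\varepsilon$ as $t\to+\infty$ (manifestly $R$-free), the Hamiltonian identity and the bounds $|\dot v_\varepsilon|,|\ddot v_\varepsilon|\le c_{n,k}v_\varepsilon$ coming from~(\ref{eq033}), and interior Schauder estimates on compact portions of the cylinder that can be normalized by translation; following these through, one sees that no estimate ever acquires a dependence on $R$.
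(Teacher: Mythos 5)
The paper's own ``proof'' is exactly your fallback route: it simply asserts that a careful reading of the proof in \cite{MN} shows the constant $C$ (and the threshold $\varepsilon_0$) can be taken independent of $R$, the quantitative inputs there --- the indicial roots $\pm\delta_{n,k}$, the Hamiltonian bounds $|\dot v_\varepsilon|,|\ddot v_\varepsilon|\le c_{n,k}v_\varepsilon$ from (\ref{eq033}), and interior Schauder estimates --- being manifestly $R$-free; so your second argument coincides with what the paper does. Your preferred first route, conjugating $\mathcal L_{\varepsilon,R}$ to $\mathcal L_{\varepsilon}$ by the axial translation $t\mapsto t+\log R$, is a genuinely different and more structural idea, but it does not close for precisely the reason you flag yourself: $D_1=(0,+\infty)\times\mathbb S^{n-1}$ is not translation invariant, so conjugation converts the boundary-value problem for $\mathcal L_{\varepsilon,R}$ on $D_1$ (with the condition encoded in $[\;\cdot\;]_0$ at $t=0$) into a problem for $\mathcal L_{\varepsilon}$ on a shifted half-cylinder with the boundary condition imposed at $t=\log R$. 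Since $v_\varepsilon$ is periodic but not translation invariant, and in the application $R$ is comparable to $\varepsilon$ (so the shift $\log R$ is unbounded as $\varepsilon\to 0$), uniformity over the position of the boundary relative to the Delaunay necks is exactly the content of the $R$-independence and cannot be extracted from the $R=1$ statement by symmetry alone; the cancellation of the fixed powers $R^{\gamma}$, $R^{\overline\gamma}$ in the weighted norms is correct but is not the delicate point. So the conjugation should be regarded as motivation, while the actual proof is the line-by-line inspection of \cite{MN} that both you and the paper invoke.
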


From this proposition we get the following
\begin{proping}
Let $R>0$, $\gamma\in\left(-\delta_{n,k}+1-n/2k,\delta_{n,k}+1-n/2k\right)$ and $\overline\gamma>\displaystyle n/2+1-n/2k$. There exists a positive real number $\varepsilon_0=\varepsilon_0(\gamma,\overline\gamma, n, k,\beta)>0$ such that, for every $\varepsilon\in(0,\varepsilon_0]$, there is an operator
$$\hspace{-2cm}G_{\varepsilon,R}:C^{0,\alpha}_{\gamma-1-n+\frac{n}{2k}} (B_1(0)\backslash\{0\})^\perp\oplus C^{0,\alpha}_{\overline\gamma-1-n+\frac{n}{2k}} (B_1(0)\backslash\{0\})^\top$$
$$\hspace{4cm}\longrightarrow C^{2,\alpha}_{\gamma}(B_1(0)\backslash\{0\})^\perp\oplus C^{2,\alpha}_{\overline\gamma}(B_1(0)\backslash\{0\})^\top$$
such that for $f=f^\perp+f^\top\in C^{0,\alpha}_{\gamma} (B_1(0)\backslash\{0\})^\perp\oplus C^{0,\alpha}_{\overline\gamma} (B_1(0)\backslash\{0\})^\top$, the function $w:=G_{\varepsilon,R}(f)=w^\perp+w^\top$ solves the equation
$$\left\{\begin{array}{rcl}
L_{\delta}^{u_{\varepsilon,R}}(w)=f & \mbox{ in } & B_1(0)\backslash\{0\}\\
\pi''_1(w|_{\mathbb{S}^{n-1}})=0 & on & \partial B_1(0)
\end{array}\right.,$$
and the norm satisfies
 \begin{equation*}\label{eq028}
  \|w^\perp\|_{C^{2,\beta}_\gamma(B_1(0)\backslash\{0\})}\leq C\|f^\perp\|_{C^{0,\beta}_{\gamma-1-n+\frac{n}{2k}}(B_1(0)\backslash\{0\})}
 \end{equation*}
 and
 $$\|w^\top\|_{C^{2,\beta}_{\overline\gamma}(B_1(0)\backslash\{0\})}\leq C\|f^\top\|_{C^{0,\beta}_{\overline\gamma-1-n+\frac{n}{2k}}(B_1(0)\backslash\{0\})},$$
 where $C=C(\gamma,\overline\gamma,n,k,\beta)>0$ is a constant.	

\end{proping}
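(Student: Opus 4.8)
The plan is to transfer Proposition \ref{propo001} from the cylinder $D_1 = (0,+\infty)\times\mathbb S^{n-1}$ to the punctured ball $B_1(0)\backslash\{0\}$ via the conformal change that relates $\delta$ to $g_{cyl}$, and then absorb the weight shift coming from the Delaunay conformal factor. Recall $\Phi(\theta,t)=e^{-t}\theta$ gives $\Phi^*\delta = e^{-2t}g_{cyl}$, and that $u_{\varepsilon,R}$ on the ball corresponds under this to $v_\varepsilon$ (suitably translated by $\log R$) on the cylinder via $v(t) = |x|^{\frac{n-2k}{2k}}u(x)$ with $t=-\log|x|$. First I would use the conformal covariance identity (\ref{eq024}), namely $L_{u^{4k/(n-2k)}g}^v(w) = u^{-\frac{2kn}{n-2k}}L_g^{uv}(uw)$, with $g=g_{cyl}$ and $u = e^{-\frac{n-2k}{2k}t}$ (the factor realizing $\delta$ from $g_{cyl}$), to write $L_\delta^{u_{\varepsilon,R}}$ as a conjugate of $L_{g_{cyl}}^{v_{\varepsilon,R}}$ by explicit powers of $|x|$. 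By Lemma \ref{lem000}, $L_{g_{cyl}}^{v_\varepsilon} = C_{n,k}v_\varepsilon h_\varepsilon^{\frac{k-1}{2}}\mathcal L_\varepsilon(h_\varepsilon^{\frac{k-1}{2}}\,\cdot\,)$, so up to positive bounded (above and below, uniformly in $\varepsilon$ by Proposition \ref{propo003} and (\ref{eq033})) factors the operator to invert is exactly $\mathcal L_\varepsilon$, whose mapping properties are given by Proposition \ref{propo001}.

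Next I would carry out the bookkeeping of function spaces. A function $u\in C^{k,\alpha}_\mu(B_1(0)\backslash\{0\})$ corresponds, under $t=-\log|x|$ and the substitution $w(t,\theta) = |x|^{-\mu}u(x)$-type rescaling dictated by Definition \ref{def1}, to a function in the cylindrical weighted space with weight matching $\mu$; the change of variables converts the scale-invariant norm on dyadic annuli $[\sigma,2\sigma]$ into the translation-invariant norm on unit-length cylindrical slabs. The weight shift $\gamma \mapsto \gamma - 1 - n + \frac{n}{2k}$ on the right-hand side and the relation between $\gamma$ on $D_1$ and on the ball is precisely the arithmetic of conjugating $\mathcal L_\varepsilon$ (a second-order operator on the cylinder, so two derivatives $= $ shift by $2$ in the cylindrical weight, but measured multiplicatively on the ball this is the factor coming from $|x|^{-2}$) together with the $|x|$-powers produced by (\ref{eq024}) and by the $v_\varepsilon h_\varepsilon^{k-1}$ prefactor, which behaves like $|x|^{\frac{n-2k}{k}}\cdot(\text{bounded})$ near $0$. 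I would state the dictionary once (weight $\mu$ on the ball $\leftrightarrow$ weight $\mu - \frac{n-2k}{2k}$ or similar on the cylinder, depending on the normalization chosen) and check that the hypotheses $\gamma\in(-\delta_{n,k}+1-n/2k,\ \delta_{n,k}+1-n/2k)$ and $\overline\gamma > n/2 + 1 - n/2k$ translate to the hypotheses $\gamma'\in(-\delta_{n,k},\delta_{n,k})$ and $\overline\gamma' > n/2$ of Proposition \ref{propo001}. Defining $G_{\varepsilon,R}(f)$ as (explicit $|x|$-power) composed with $\mathcal L_{\varepsilon,R}^{-1}$ composed with (explicit $|x|$-power)$\,f$, and projecting away the deficiency-space component (which is legitimate since the inhomogeneous problem we need only requires a particular solution satisfying the stated boundary condition $\pi''_1(w|_{\mathbb S^{n-1}})=0$), then yields the operator and the two norm estimates directly from the three estimates in Proposition \ref{propo001}, with $C$ independent of $R$ by the remark in its proof.

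The one genuinely nonroutine point is the boundary condition $\pi''_1(w|_{\partial B_1(0)}) = 0$ and the matching of the $\perp$/$\top$ splitting across the conformal change. On the cylinder $D_1 = (0,+\infty)\times\mathbb S^{n-1}$ the operator $\mathcal L_{\varepsilon,R}$ is set up with a boundary at $t=0$, i.e.\ at $|x|=1$, and the isomorphism statement already incorporates a boundary condition there; I would need to check that the eigenfunction decomposition on $\mathbb S^{n-1}$ used to define $\pi''_r$ (Definition in the Notation subsection) is preserved by the conjugation — which it is, since all the $|x|$-power and $h_\varepsilon$, $v_\varepsilon$ factors are radial (functions of $t$ alone), hence commute with projection onto spherical harmonics — so that $C^{k,\alpha}_\mu(B_1(0)\backslash\{0\})^\perp$ and $C^{k,\alpha}_\mu(B_1(0)\backslash\{0\})^\top$ go over to the corresponding $\perp$ and $\top$ subspaces on the cylinder. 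The main obstacle, then, is really just organizing the exponents correctly: getting the shift $-1-n+\frac{n}{2k}$ right requires combining the weight change of variables, the two-derivative gain of $\mathcal L_\varepsilon$, and the growth rate of the prefactor $v_\varepsilon h_\varepsilon^{k-1} \sim |x|^{(n-2k)/k}$, and a sign error anywhere collapses the argument. Everything else — boundedness of the conjugating factors above and below uniformly in $\varepsilon$, independence of $C$ from $R$, and the fact that $w = G_{\varepsilon,R}(f)$ solves $L_\delta^{u_{\varepsilon,R}}(w)=f$ — follows formally once the dictionary is in place.
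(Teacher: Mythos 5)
Your proposal follows essentially the same route as the paper: the paper's proof is exactly the conjugation of $L_\delta^{u_{\varepsilon,R}}$ to the cylinder via the conformal equivariance identity (\ref{eq024}), writing $L_\delta^{u_{\varepsilon,R}}(w)\circ\Phi= e^{nt}L_{g_{cyl}}^{v_{\varepsilon,R}}(e^{\frac{2k-n}{2k}t}w\circ\Phi)$, and then invoking Proposition \ref{propo001}, with the weight and projection bookkeeping you describe left implicit. Your extra remarks are sound, with the small caveat that discarding the deficiency component is legitimate not merely because a particular solution suffices, but because $\mathcal W_\varepsilon$ is spanned by Jacobi fields (homogeneous solutions of $\mathcal L_{\varepsilon,R}$) supported in the low frequencies, so removing it preserves both the equation and the condition $\pi''_1(w|_{\mathbb S^{n-1}})=0$.
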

\begin{proof}
 Since $\Phi^*\delta= (e^{\frac{2k-n}{2k}t})^{\frac{4k}{n-2k}}g_{cyl}$, then using the conformal equivariance (\ref{eq024}) we obtain
\begin{equation}\label{eq023}
L_\delta^{u_{\varepsilon,R}}(w)\circ\Phi= e^{nt}L_{g_{cyl}}^{v_{\varepsilon,R}} (e^{\frac{2k-n}{2k}t}w\circ\Phi).
\end{equation} 
From this and Proposition \ref{propo001} the result follows.
\end{proof}

Let $f$ defined in $B_1(0)\backslash\{0\}$ and $v$ solution of
$$L_{\varepsilon,R}(v)=f \mbox{ in } B_1(0)\backslash\{0\}.$$
Here we are doing $L_{\varepsilon,R}:=L_\delta^{u_{\varepsilon,R}}$. 

Note that, for $r>0$, if $v_r(x)=v(r^{-1}x)$, then by Proposition \ref{propo001} and (\ref{eq023}) we get
$$L_\delta^{u_{\varepsilon,rR}}(v_r)(x)=r^{-1-n+\frac{n}{2k}}L_\delta^{u_{\varepsilon,R}}(v)(r^{-1}x).$$
So, if we define $g(x)=r^{-1-n+\frac{n}{2k}}f(r^{-1}x)$, then
$$L_{\varepsilon,rR}(v_r)=g \mbox{ in } B_r(0)\backslash\{0\}.$$

Besides, the norm satisfies $\|v_r\|_{(2,\alpha),\mu,r}=r^{-\mu}\|v\|_{(2,\alpha),\mu,1}$ and $\|g\|_{(2,\alpha),\mu,r}=r^{-\mu}\|f\|_{(2,\alpha),\mu,1}.$ Therefore we obtain the next result.

\begin{proping}\label{propo004}
Let $R>0$, $\gamma\in\left(-\delta_{n,k}+1-n/2k,\delta_{n,k}+1-n/2k\right)$ and $\overline\gamma>\displaystyle n/2+1-n/2k$. There exists a positive real number $\varepsilon_0=\varepsilon_0(\gamma,\overline\gamma, n, k,\beta)>0$ such that, for every $\varepsilon\in(0,\varepsilon_0]$, there is an operator
$$G_{\varepsilon,R,r}:C^{0,\alpha}_{\gamma-1-n+\frac{n}{2k}} (B_r(0)\backslash\{0\})^\perp\oplus C^{0,\alpha}_{\overline\gamma-1-n+\frac{n}{2k}} (B_r(0)\backslash\{0\})^\top$$
$$\hspace{3cm}\longrightarrow C^{2,\alpha}_{\gamma}(B_r(0)\backslash\{0\})^\perp\oplus C^{2,\alpha}_{\overline\gamma}(B_r(0)\backslash\{0\})^\top$$
such that for $f=f^\perp+f^\top\in C^{0,\alpha}_{\gamma} (B_r(0)\backslash\{0\})^\perp\oplus C^{0,\alpha}_{\overline\gamma} (B_r(0)\backslash\{0\})^\top$, the function $w:=G_{\varepsilon,R}(f)=w^\perp+w^\top$ solves the equation
$$\left\{\begin{array}{rcl}
L_{\delta}^{u_{\varepsilon,R}}(w)=f & \mbox{ in } & B_r(0)\backslash\{0\}\\
\pi''_r(w|_{\mathbb{S}^{n-1}})=0 & on & \partial B_r(0)
\end{array}\right.,$$
and the norm satisfies
 $$\|w^\perp\|_{C^{2,\beta}_\gamma(B_r(0)\backslash\{0\})}\leq C\|f^\perp\|_{C^{0,\beta}_{\gamma-1-n+\frac{n}{2k}}(B_r(0)\backslash\{0\})}$$
 and
 $$\|w^\top\|_{C^{2,\beta}_{\overline\gamma}(B_r(0)\backslash\{0\})}\leq C\|f^\top\|_{C^{0,\beta}_{\overline\gamma-1-n+\frac{n}{2k}}(B_r(0)\backslash\{0\})},$$
 where $C=C(\gamma,\overline\gamma,n,k,\beta)>0$ is a constant.	

\end{proping}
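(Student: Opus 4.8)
The plan is to deduce this from the preceding proposition (which is the case $r=1$) by the dilation argument already indicated in the paragraph just above the statement, the only real work being to keep the bookkeeping straight.

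First I would set up the dilation dictionary. Given $f=f^\perp+f^\top$ on $B_r(0)\backslash\{0\}$ in the stated weighted H\"older spaces, put $\hat f(x):=r^{1+n-\frac{n}{2k}}f(rx)$ for $x\in B_1(0)\backslash\{0\}$. The projections $\pi''_s$ commute with dilations of the domain, so the subspace conditions are preserved, giving $\hat f^\perp\in C^{0,\alpha}_{\gamma-1-n+\frac{n}{2k}}(B_1(0)\backslash\{0\})^\perp$ and $\hat f^\top\in C^{0,\alpha}_{\overline\gamma-1-n+\frac{n}{2k}}(B_1(0)\backslash\{0\})^\top$; a direct computation with Definition \ref{def1} shows that each weighted norm of $\hat f$ equals a fixed power of $r$ times the corresponding weighted norm of $f$. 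Then I would apply the preceding proposition with $R$ replaced by $r^{-1}R$, obtaining $\hat w=\hat w^\perp+\hat w^\top=G_{\varepsilon,r^{-1}R}(\hat f)$ on $B_1(0)\backslash\{0\}$, which solves $L_\delta^{u_{\varepsilon,r^{-1}R}}(\hat w)=\hat f$ with $\pi''_1(\hat w|_{\mathbb S^{n-1}})=0$, and define $G_{\varepsilon,R,r}(f)(x):=\hat w(r^{-1}x)$.

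The heart of the verification is that $w(x):=\hat w(r^{-1}x)$ indeed solves $L_\delta^{u_{\varepsilon,R}}(w)=f$ on $B_r(0)\backslash\{0\}$ with $\pi''_r(w|_{\mathbb S^{n-1}})=0$. This uses the conformal equivariance (\ref{eq024}), the identity (\ref{eq023}) relating $L_\delta^{u_{\varepsilon,R}}$ to $L_{g_{cyl}}^{v_{\varepsilon,R}}$, and the observation that the dilation $x\mapsto r^{-1}x$ corresponds in cylindrical coordinates to the translation $t\mapsto t+\log r$ under which $v_{\varepsilon,r^{-1}R}$ is carried to $v_{\varepsilon,R}$ -- precisely the computation $L_\delta^{u_{\varepsilon,rR'}}(v_{r})(x)=r^{-1-n+\frac{n}{2k}}L_\delta^{u_{\varepsilon,R'}}(v)(r^{-1}x)$ recorded above, applied with $R'=r^{-1}R$. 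Feeding the norm estimates of the preceding proposition for $\hat w$ in terms of $\hat f$ through the scaling relations $\|\hat w\|_{C^{2,\alpha}_\gamma(B_1(0)\backslash\{0\})}=r^{\gamma}\|w\|_{C^{2,\alpha}_\gamma(B_r(0)\backslash\{0\})}$ (and likewise for the $\top$-component and for $\hat f$) then produces the asserted bounds for $w^\perp$ and $w^\top$.

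The one point that requires genuine care, rather than routine rescaling, is that the constant $C$ in the conclusion be independent of $r$. This is exactly where the remark in Proposition \ref{propo001} is used: the constant there does not depend on $R$, hence neither does the constant in the preceding proposition, and so it is in particular independent of the auxiliary value $r^{-1}R$ appearing in the construction; consequently the constant obtained for $G_{\varepsilon,R,r}$ depends only on $\gamma,\overline\gamma,n,k,\beta$. No analytic difficulty arises beyond what is already present in the preceding proposition -- the statement is purely a change of domain scale.
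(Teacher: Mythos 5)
Your argument is correct and is essentially the paper's own proof: the paragraph preceding the statement records exactly this dilation identity and the norm scaling, and the proposition is obtained by rescaling the unit-ball result, with the $R$-independence of the constant from Proposition \ref{propo001} guaranteeing that $C$ does not depend on $r$. You merely run the dilation in the reverse direction (pulling $f$ from $B_r(0)$ back to $B_1(0)$ and applying the unit-ball operator with parameter $r^{-1}R$), which is the same computation.
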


For the next result we define the space
$$C_{(\mu,\nu)}^{k,\alpha}\left(B_r(0)\backslash\{0\} \right):= C_{\mu}^{k,\alpha}(B_r(0)\backslash\{0\})^\perp\oplus C_{\nu}^{k,\alpha}(B_r(0)\backslash\{0\})^\top,$$
with the norm 
\begin{equation}\label{eq058}
 \|u\|_{(k,\alpha),(\mu,\nu)} := r^{\gamma-\overline\gamma}\|u^\perp\|_{(k,\alpha),\mu,}+ \| u^\top\|_{(k,\alpha),\nu},
\end{equation}
where $\gamma$ and $\overline\gamma$ will be given by the proposition and $u=u^\perp+u^\top$ with $u^\perp\in C_\mu^{m,\alpha}(B_r(0)\backslash\{0\})^\perp$ and $u^\top\in C_{\nu}^{m,\alpha}(B_r(0)\backslash\{0\})^\top.$

By a perturbation argument we obtain the next corollary. Here we define $L_{\varepsilon,R,a}:=L_\delta^{u_{\varepsilon,R,a}}.$
\begin{coring}\label{cor001}
Let $R>0$, $\alpha\in(0,1)$, $\gamma\in\left(-\delta_{n,k}+1-n/2k, \delta_{n,k}+1-n/2k\right)$ and $\overline\gamma>n/2+1-n/2k$. There exist positive real numbers $\varepsilon_0=\varepsilon_0(\mu,\delta, n, \alpha)>0$ and $r_0>0$, such that, for every $\varepsilon\in(0,\varepsilon_0]$, $a\in\mathbb{R}^n$ and $r\in(0,1]$ with $|a|r\leq r_0$, there is an operator
$$G_{\varepsilon,R,r,a}:C^{0,\alpha}_{(\gamma-1-n+\frac{n}{2k},\overline\gamma-1-n+\frac{n}{2k})} (B_r(0)\backslash\{0\})\rightarrow C^{2,\alpha}_{(\gamma,\overline\gamma)}(B_r(0)\backslash\{0\})$$
with the norm  bounded independently of $\varepsilon$ and $R$, such that for every function $f\in C^{0,\alpha}_{(\gamma-1-n+\frac{n}{2k},\overline\gamma-1-n+\frac{n}{2k})} (B_r(0)\backslash\{0\})$, the function $w:=G_{\varepsilon,R,r,a}(f)$ solves the equation
$$\left\{\begin{array}{rcl}
L_{\varepsilon,R,a}(w)=f & \mbox{ in } & B_r(0)\backslash\{0\}\\
\pi''_r(w|_{\mathbb{S}^{n-1}})=0 & on & \partial B_r(0)
\end{array}\right.$$
and the norm satisfies
 \begin{equation}\label{eq047}
  \|w\|_{(2,\alpha),(\gamma,\overline\gamma)}\leq C\|f\|_{(0,\beta),(\gamma-1-n+\frac{n}{2k},\overline\gamma-1-n+\frac{n}{2k})},
 \end{equation}
 where $C=C(\gamma,\overline\gamma,n,k,\alpha)>0$ is a constant.	
\end{coring}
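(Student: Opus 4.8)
The plan is to treat $L_{\varepsilon,R,a}$ as a perturbation of $L_{\varepsilon,R}=L_\delta^{u_{\varepsilon,R}}$ and to invert it by a Neumann series anchored on the right inverse $G_{\varepsilon,R,r}$ furnished by Proposition \ref{propo004}. Write
$$L_{\varepsilon,R,a}=L_{\varepsilon,R}+E_{\varepsilon,R,a},\qquad E_{\varepsilon,R,a}:=L_\delta^{u_{\varepsilon,R,a}}-L_\delta^{u_{\varepsilon,R}} .$$
The first and central step is to show that $E_{\varepsilon,R,a}$ is bounded from $C^{2,\alpha}_{(\gamma,\overline\gamma)}(B_r(0)\backslash\{0\})$ into $C^{0,\alpha}_{(\gamma-1-n+\frac{n}{2k},\overline\gamma-1-n+\frac{n}{2k})}(B_r(0)\backslash\{0\})$ with operator norm at most $C|a|r$, where $C$ is independent of $\varepsilon$ and $R$. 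Here Corollary \ref{cor02} is the main input: combining (\ref{eq37}) for $R>|x|$ (resp.\ (\ref{eq93}) for $R\le|x|$) with the comparabilities $u_{\varepsilon,R}\sim|x|\partial_ru_{\varepsilon,R}\sim|x|^2\partial_r^2u_{\varepsilon,R}$ extracted from Corollary \ref{cor04}, one obtains $u_{\varepsilon,R,a}-u_{\varepsilon,R}=O''(|a||x|\,u_{\varepsilon,R})$ together with the corresponding estimates for the first and second derivatives. Since for $g=\delta$ the curvature terms disappear and the coefficients of $L_\delta^{u}$ in (\ref{eq037}) are universal polynomials in $u,\nabla u,\nabla^2u$, each coefficient of $L_\delta^{u_{\varepsilon,R,a}}$ differs from the corresponding coefficient of $L_\delta^{u_{\varepsilon,R}}$ by a quantity which, measured in the weighted norm (\ref{eq058}) adapted to the target space, is $O(|a|r)$ relative to the size of the latter; summing these contributions gives the asserted bound on $\|E_{\varepsilon,R,a}\|$.

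Granting this, fix $r_0>0$ (shrinking $\varepsilon_0$ if needed) so small that $C|a|r\le Cr_0\le\tfrac12$ whenever $|a|r\le r_0$. By Proposition \ref{propo004} the norm of $G_{\varepsilon,R,r}$ is bounded independently of $\varepsilon$ and $R$, so $E_{\varepsilon,R,a}G_{\varepsilon,R,r}$ is an endomorphism of $C^{0,\alpha}_{(\gamma-1-n+\frac{n}{2k},\overline\gamma-1-n+\frac{n}{2k})}(B_r(0)\backslash\{0\})$ of norm $\le\tfrac12$, whence $I+E_{\varepsilon,R,a}G_{\varepsilon,R,r}$ is invertible with inverse of norm $\le2$ given by the convergent Neumann series. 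I would then define
$$G_{\varepsilon,R,r,a}:=G_{\varepsilon,R,r}\circ\bigl(I+E_{\varepsilon,R,a}G_{\varepsilon,R,r}\bigr)^{-1}.$$
For $f$ in the prescribed space, setting $h:=(I+E_{\varepsilon,R,a}G_{\varepsilon,R,r})^{-1}f$ and $w:=G_{\varepsilon,R,r,a}(f)=G_{\varepsilon,R,r}(h)$ one computes $L_{\varepsilon,R,a}w=(L_{\varepsilon,R}+E_{\varepsilon,R,a})G_{\varepsilon,R,r}h=h+E_{\varepsilon,R,a}G_{\varepsilon,R,r}h=f$, while $w$ inherits the boundary condition $\pi''_r(w|_{\mathbb S^{n-1}_r})=0$ directly from $G_{\varepsilon,R,r}$. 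Finally (\ref{eq047}) follows from $\|w\|_{(2,\alpha),(\gamma,\overline\gamma)}\le\|G_{\varepsilon,R,r}\|\,\|h\|\le2\|G_{\varepsilon,R,r}\|\,\|f\|$, with a constant depending only on $\gamma,\overline\gamma,n,k,\alpha$.

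I expect the weighted estimate on $E_{\varepsilon,R,a}$ to be the main obstacle: one has to check term by term that the admissible replacement $u_{\varepsilon,R}\mapsto u_{\varepsilon,R,a}$ perturbs every summand of the lengthy expression (\ref{eq037}) by a factor $O(|a|r)$ uniformly in $\varepsilon$ and $R$, while keeping track both of the weight mismatch between $\gamma$ and $\overline\gamma$ encoded in the norm (\ref{eq058}) and of the fact that $E_{\varepsilon,R,a}$ need not respect the splitting into high and low Fourier modes on the spheres $\mathbb S^{n-1}_s$; for the latter one again exploits the smallness of $|a||x|\le|a|r\le r_0$. The uniformity in $R$ relies on the scaling identities recorded just before Proposition \ref{propo004} together with the $R$-independence of the constants in Proposition \ref{propo001}.
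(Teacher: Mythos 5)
Your proposal is correct and is essentially the paper's own proof: the same decomposition $L_{\varepsilon,R,a}=L_{\varepsilon,R}+E_{\varepsilon,R,a}$, the same key bound $\|E_{\varepsilon,R,a}\|\leq c_{n,k}|a|r$ in the weighted norm (\ref{eq058}) (which the paper extracts from Lemma \ref{lem000} and (\ref{eq023}) rather than from Corollaries \ref{cor02}--\ref{cor04}, but it is the same estimate), the same choice of $r_0$ making $\|E_{\varepsilon,R,a}G_{\varepsilon,R,r}\|\leq\frac12$, and the identical Neumann-series definition $G_{\varepsilon,R,r,a}=G_{\varepsilon,R,r}\circ(L_{\varepsilon,R,a}\circ G_{\varepsilon,R,r})^{-1}$. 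One small point: the pointwise statement $u_{\varepsilon,R,a}-u_{\varepsilon,R}=O''(|a||x|\,u_{\varepsilon,R})$ is not literally what Corollary \ref{cor02} gives for the quadratic remainder (near the Delaunay neck $u_{\varepsilon,R}$ can be as small as $\varepsilon^{\frac{n-2k}{2k}}|x|^{\frac{2k-n}{2k}}$); use instead $O''(|a||x|\,|x|^{\frac{2k-n}{2k}})$, which together with $u_{\varepsilon,R}\leq|x|^{\frac{2k-n}{2k}}$ still yields the coefficientwise bound in (\ref{eq037}) and hence the asserted $C|a|r$ operator norm, uniformly in $\varepsilon$ and $R$.
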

\begin{proof} By Lemma \ref{lem000} and (\ref{eq023}) we obtain
$$\|L^{u_{\varepsilon,R,a}}_\delta(v)-L^{u_{\varepsilon,R}}_\delta(v)\|_{(0,\alpha),[\sigma,2\sigma]}\leq c_{n,k}|a|\sigma^{-n+\frac{n}{2k}}\|v\|_{(2,\alpha),[\sigma,2\sigma]}.$$
This inequality holds for the low and high frequencies spaces. Thus by definition of the norm (\ref{eq058}) we get that
\begin{equation*}\label{eq026}
 \|L^{u_{\varepsilon,R,a}}_\delta(v)-L^{u_{\varepsilon,R}}_\delta(v)\|_{(0,\alpha),(\gamma-1-n+\frac{n}{2k},\overline\gamma-1-n+\frac{n}{2k})}\leq c_{n,k}|a|r\|v\|_{(2,\alpha),(\gamma,\overline\gamma)}.
\end{equation*}

 Therefore if we choose $r_0\leq (2c_{n,k}\|G_{\varepsilon,R,r}\|)^{-1}$, where $G_{\varepsilon,R,r}$ is the operator given by Proposition \ref{propo004}, then
 $$\|L_{\varepsilon,R,a}\circ G_{\varepsilon,R,r}-I\|\leq\|L_{\varepsilon,R,a}-L_{\varepsilon,R}\|\|G_{\varepsilon,R,r}\|\leq \frac{1}{2}.$$
 
 This implies that the operator $L_{\varepsilon,R,a}\circ G_{\varepsilon,R,r}$ has a bounded right inverse given by
 $$\left(L_{\varepsilon,R,a}\circ G_{\varepsilon,R,a}\right)^{-1}=\sum_{i=0}^{\infty}\left(I-L_{\varepsilon,R,a}\circ G_{\varepsilon,R,r}\right)^i.$$
 and it has norm bounded independently of $\varepsilon$, $R$, $a$ and $r$,
 $$\|\left(\pi'\circ L_{\varepsilon,R,a}\circ G_{\varepsilon,R,a}\right)^{-1}\|\leq \sum_{i=0}^\infty\|L_{\varepsilon,R,a}\circ G_{\varepsilon,R,a}-I\|^i\leq 1.$$
 
 Therefore we define a right inverse for $L_{\varepsilon,R,a}$ by 
 $$G_{\varepsilon,R,r,a}:=G_{\varepsilon,R,r}\circ \left(L_{\varepsilon,R,a}\circ G_{\varepsilon,R,r}\right)^{-1}.$$
 \end{proof}

\section{Interior Analysis}\label{sec02}
In this section we will explain how to use the assumption on the Weyl tensor to reduce the problem to a problem of finding a fixed point of a map. We will show the existence of a family of local solutions, for the singular $\sigma_2-$Yamabe problem, in some punctured small ball centered at a point $p$, which depends on $n+2$ parameters with prescribed Dirichlet data. Moreover, each element of this family is asymptotic to a Delaunay-type solution $u_{\varepsilon,R,a}$.

\subsection{ Nonlinear analysis}
Throughout the rest of the paper $d=\left[\frac{n}{2}\right]$. Recall that $(M,g_0)$ is a compact Riemannian manifold with dimension $n\geq 5$, $\sigma_2(A_{g_0})=n(n-1)/8$, and the Weyl tensor $W_{g_0}$ at $p\in M$ satisfies the condition
\begin{equation}\label{eq059}
 \nabla^lW_{g_0}(p)=0,\;\;l=0,1,\ldots,d-2.
\end{equation}

Since our problem is conformally invariant, in this section we will work in conformal normal coordinates given by Theorem 2.7 in \cite{MR888880}. By its proof there exists a positive smooth function $\mathcal F\in C^\infty(M)$ such that $g=\mathcal F^{\frac{8}{n-4}}g_0$ and $\mathcal F(x)=1+\overline f$, with $\overline{f}=O(|x|^2)$ in $g-$normal coordinates at $p$. Also, since the Weyl tensor is conformally invariant, it follows that the Weyl tensor of the metric $g$ satisfies the condition (\ref{eq059}). 

In theses coordinates it is convenient to consider the Taylor expansion of the metric.  We will write $g_{ij}=\exp(h_{ij}),$ where $h_{ij}$ is a symmetric two-tensor satisfying $h_{ij}=O(|x|^2)$ and tr$h_{ij}(x)=O(|x|^N)$, where $N$ is as big as we want. In this case $\det(g_{ij})=1+O(|x|^N)$. Using the assumption of the Weyl tensor (\ref{eq059}), we obtain $h_{ij}=O(|x|^{d+1})$. Therefore, we conclude that $g=\delta_{ij}+O(|x|^{d+1})$, $R_g=O(|x|^{d-1})$ and $|Ric_g|=O(|x|^{d-1})$.

Next lets recall from \cite{MR2639545} the following proposition, see also \cite{M}.

\begin{proping}\label{propo02}
Let $\mu\leq 2$, $0<r<1$ and $\alpha\in(0,1)$ be constants. For each $\phi\in C^{2,\alpha}(\mathbb{S}_r^{n-1})^\perp$ there is a function $v_\phi\in C^{2,\alpha}_2(B_r(0)\backslash\{0\})^\perp$ so that
$$\left\{\begin{array}{lcl}
\Delta v_\phi=0 & \mbox{ in } & B_r(0)\backslash\{0\}\\
v_\phi=\phi & \mbox{ on } & \partial B_r(0)
\end{array}\right.$$
and
\begin{equation}\label{eq73}
\|v_\phi\|_{(2,\alpha),\mu,r}\leq C r^{-\mu}\|\phi\|_{(2,\alpha),r},
\end{equation}
where the constant $C>0$ does not depend on $r$
\end{proping}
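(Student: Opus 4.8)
The plan is to construct $v_\phi$ by solving the Dirichlet problem for $\Delta$ on the punctured ball with boundary data $\phi$, and then to obtain the weighted estimate \eqref{eq73} by a scaling argument. First I would note that since $\phi \in C^{2,\alpha}(\mathbb{S}_r^{n-1})^\perp$, its spherical-harmonic expansion $\phi(r\theta) = \sum_{j \ge n+1} \phi_j(r) e_j(\theta)$ involves only frequencies $j \ge n+1$. For each such mode, the harmonic functions on $\mathbb{R}^n$ that are radial multiples of $e_j(\theta)$ are spanned by $|x|^{\gamma_j^+} e_j(\theta)$ and $|x|^{\gamma_j^-} e_j(\theta)$, where $\gamma_j^{\pm}$ are the two roots of $\gamma(\gamma + n - 2) = \lambda_j$; explicitly $\gamma_j^+ = j \ge n+1$ and $\gamma_j^- = 2 - n - j \le -n-1$. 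To get a function that is bounded (indeed decaying into the interior like a power $\ge 2$) I would choose the \emph{interior-regular} branch, setting
$$
v_\phi(r'\theta) := \sum_{j \ge n+1} \phi_j(r)\,\left(\frac{r'}{r}\right)^{\gamma_j^+} e_j(\theta), \qquad 0 < r' \le r.
$$
This is harmonic in $B_r(0)\setminus\{0\}$ (in fact across the origin), equals $\phi$ on $\partial B_r(0)$, and since every exponent $\gamma_j^+ \ge n+1 \ge 2 \ge \mu$, it lies in the high-frequency weighted space $C^{2,\alpha}_2(B_r(0)\setminus\{0\})^\perp$, or more precisely in $C^{2,\alpha}_\mu$ for any $\mu \le 2$ as claimed.

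The estimate \eqref{eq73} I would prove by rescaling to $r = 1$. Writing $\tilde v(y) = v_\phi(ry)$ and $\tilde\phi(\theta) = \phi(r\theta)$, one has $\Delta \tilde v = 0$ in $B_1(0)\setminus\{0\}$, $\tilde v = \tilde\phi$ on $\partial B_1$, and $\tilde v$ is still supported on frequencies $\ge n+1$. On the unit ball the statement reduces to: a harmonic function on $B_1 \setminus \{0\}$ with boundary data orthogonal to the low frequencies, chosen to be the interior-regular extension, satisfies $\|\tilde v\|_{(2,\alpha),\mu,1} \le C\|\tilde\phi\|_{(2,\alpha),1}$ with $C$ independent of the data. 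This is a fixed elliptic estimate: on the annulus $\{|y| \in [\sigma, 2\sigma]\}$ interior Schauder estimates for $\Delta$ give control of $\sum_{j=0}^2 \sigma^j|\nabla^j \tilde v| + \sigma^{2+\alpha}[\nabla^2 \tilde v]_\alpha$ on $[\sigma,2\sigma]$ by the sup of $|\tilde v|$ on $[\sigma/2, 4\sigma]$; and the mode-by-mode decay $|\phi_j(r)| \, (r'/r)^{j} $ together with $\|\tilde\phi\|_{(2,\alpha),1}$ controlling the $\ell^2$ (hence, after using $\gamma_j^+ \ge n+1$, the relevant weighted sup) norm of the coefficients shows $\sup_{|y| = \sigma}|\tilde v| \le C\sigma^{n+1}\|\tilde\phi\|_{(2,\alpha),1} \le C\sigma^{\mu}\|\tilde\phi\|_{(2,\alpha),1}$ for $\sigma \le 1/2$. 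Multiplying by $\sigma^{-\mu}$ and taking the supremum over $\sigma$ gives $\|\tilde v\|_{(2,\alpha),\mu,1} \le C\|\tilde\phi\|_{(2,\alpha),1}$. Undoing the scaling, $\|v_\phi\|_{(2,\alpha),\mu,r} = r^{-\mu}\|\tilde v\|_{(2,\alpha),\mu,1} \le C r^{-\mu}\|\tilde\phi\|_{(2,\alpha),1} = C r^{-\mu}\|\phi\|_{(2,\alpha),r}$, which is \eqref{eq73}.

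The main obstacle, and the only place requiring care, is ensuring the constant $C$ is genuinely independent of $r$ and of $\phi$. Independence of $r$ is exactly what the rescaling achieves, provided one checks that the weighted norms transform homogeneously — i.e. $\|u(r\,\cdot)\|_{(2,\alpha),\mu,1} = r^{\mu}\|u\|_{(2,\alpha),\mu,r}$ up to the convention — which is immediate from Definition \ref{def1}. Independence of $\phi$ then follows because the unit-ball estimate is linear and the orthogonality to low frequencies forces the decay exponent to jump from the borderline value (the constant mode would give exponent $0$, the linear modes exponent $1$) up to $n+1$, comfortably above $\mu \le 2$; this is what makes the weighted norm finite and the bound uniform. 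I would also remark that one can alternatively avoid the explicit series by invoking the general theory of the Laplacian on weighted Hölder spaces on a punctured ball (as developed in the references \cite{M}, \cite{MR1712628}, \cite{MR2639545} cited for these function spaces), where exactly this mapping property — solvability of the Dirichlet problem into $C^{2,\alpha}_\mu(B_r(0)\setminus\{0\})^\perp$ with a scale-invariant bound — is standard for weights $\mu$ that are non-indicial, here any $\mu \le 2$ on the high-frequency subspace.
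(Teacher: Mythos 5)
Your construction is essentially the standard one: the paper does not prove this proposition at all, it simply recalls it from \cite{MR2639545} (see also \cite{M}), and those sources argue exactly as you do, by taking the interior-regular harmonic extension of the high-frequency boundary data and rescaling to the unit ball. So the approach is right; the argument closes. Two points in your writeup need correcting, though. First, the indexing: $j\ge n+1$ counts eigenfunctions \emph{with multiplicity}, so it corresponds to spherical harmonics of degree $i\ge 2$ (indeed $\lambda_{n+1}=2n=2\cdot n$ corresponds to $i=2$), hence the interior-regular exponent is $\gamma_j^+=i\ge 2$, not $\gamma_j^+=j\ge n+1$. Consequently the decay you get at radius $\sigma$ is $\sigma^{2}$, not $\sigma^{n+1}$; this is precisely why the natural space in the statement is $C^{2,\alpha}_{2}$ and why the hypothesis $\mu\le 2$ appears — the estimate still holds because $\sigma^{-\mu}\sigma^{2}\le 1$ for $\sigma\le 1$, but your phrase ``comfortably above $\mu\le 2$'' rests on the wrong exponent: $2$ is exactly the borderline weight, and removing the low frequencies is what raises the exponent from $0$ and $1$ up to $2$. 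Second, your Schauder step uses interior estimates on $[\sigma/2,4\sigma]$, which leaves the ball for the outermost dyadic annuli $\sigma$ comparable to $r/2$ (after rescaling, $\sigma$ comparable to $1/2$); there you need global boundary Schauder estimates for the Dirichlet problem with $C^{2,\alpha}$ data (or the corresponding estimate for the explicit series up to $|y|=1$), which is exactly where the full norm $\|\phi\|_{(2,\alpha),r}$, rather than just $\sup|\phi|$, enters. With these two fixes your proof is complete and coincides with the argument in the cited references.
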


The main goal of this section is to solve the PDE
\begin{equation}\label{eq05}
H_{g}(u_{\varepsilon,R,a}+r^{-\overline\gamma}|x|^{\overline\gamma}v_\phi+h+v)=0
\end{equation}
in $B_{r}(0)\backslash\{0\}\subset\mathbb{R}^n$ for some $r>0$, $\varepsilon>0$, $R>0$, $\phi\in C^{2,\alpha}(\mathbb{S}_r^{n-1})^\perp$, $a\in\mathbb{R}^n$ and $\overline\gamma>1+n/4$, with $u_{\varepsilon,R,a}+r^{-\overline\gamma}|x|^{\overline\gamma}v_\phi+h+v>0$ and prescribed Dirichlet data, where the operator $H_{g}$ is defined in (\ref{eq014}) and $u_{\varepsilon,R,a}$ in (\ref{eq52}). Here, the function $h$ is defined as
$$h=\frac{1}{2}\left((1-\overline\gamma)r^{-\overline\gamma-1}|x|^{\overline\gamma+1}+(\gamma+1)r^{-\overline\gamma+1}|x|^{\overline\gamma-1}\right)  f,$$
where $f=O(|x|^2)$ will be chosen later. Thus, we have $h=O(|x|^{\overline \gamma+1})$. In the Section \ref{sec05} we will explain why to consider this function.

Note that since $H_\delta(u_{\varepsilon,R,a})=0$ by the Taylor's expansion we see that (\ref{eq05}) is equivalent to
\begin{equation}\label{eq10}
\begin{array}{lcl}
L_{\varepsilon,R,a}(v) & = & L_{\varepsilon,R,a}(v)- L_{g}^{u_{\varepsilon,R,a}}(v)- Q_{\varepsilon,R,a}(r^{-\overline\gamma}|x|^{\overline\gamma}v_\phi+h+v) \\
\\
& + & H_\delta(u_{\varepsilon,R,a})-H_g(u_{\varepsilon,R,a})- L_g^{u_{\varepsilon,R,a}}( r^{-\overline\gamma}|x|^{\overline\gamma}v_\phi+h),
\end{array}
\end{equation}
where $Q_{\varepsilon,R,a}(v):=Q^{u_{\varepsilon,R,a}}(v)$ is defined in (\ref{eq039}).

Therefore, by using the right inverse given by Corollary \ref{cor001}, we can find a solution to this equation by finding a fixed point
\begin{equation}\label{eq027}
\begin{array}{lcl}
v & = & G_{\varepsilon,R,a,r}\left(L_{\varepsilon,R,a}(v)- L_{g}^{u_{\varepsilon,R,a}}(v)- Q_{\varepsilon,R,a}(r^{-\overline\gamma}|x|^{\overline\gamma}v_\phi+h+v) \right.\\
\\
& + &\left. H_\delta(u_{\varepsilon,R,a})-H_g(u_{\varepsilon,R,a})- L_g^{u_{\varepsilon,R,a}}( r^{-\overline\gamma}|x|^{\overline\gamma}v_\phi+h)\right).
\end{array}
\end{equation}

But, first we have to show that the right hand side of (\ref{eq027}) is well defined, that is, all terms of the right hand side of the equation (\ref{eq10}) belongs to the right space, which is $C^{0,\alpha}_{(\gamma-1-\frac{3n}{4},\overline\gamma-1-\frac{3n}{4})} (B_r(0)\backslash\{0\})$, for some $\gamma\in(-\delta_{n,2}+1-n/4,\delta_{n,2}+1-n/4)$ and $\overline{\gamma}>1+n/4$.
\begin{lemma}\label{lem05}
Let $\gamma=\delta_{n,2}+1-n/4-\varepsilon_1$ and  $\overline{\gamma}=n/4+1+\varepsilon_1$, where $\varepsilon_1>0$ is very small. For all $v\in C^{2,\alpha}_{(\gamma,\overline\gamma)} (B_r(0)\backslash\{0\})$ and $\phi\in C^{2,\alpha}(\mathbb{S}_{r}^{n-1})^\perp$,  the right hand side of (\ref{eq10}) belongs to $C^{0,\alpha}_{(\gamma-1-\frac{3n}{4},\overline\gamma-1-\frac{3n}{4})} (B_r(0)\backslash\{0\})$.
\end{lemma}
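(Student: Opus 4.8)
The strategy is to estimate each of the terms appearing on the right-hand side of \eqref{eq10} separately, show that each lies in a weighted Hölder space contained in $C^{0,\alpha}_{(\gamma-1-\frac{3n}{4},\overline\gamma-1-\frac{3n}{4})}(B_r(0)\backslash\{0\})$, and then invoke the inclusion $C^{k,\alpha}_\mu\subseteq C^{l,\alpha}_\delta$ for $\mu\ge\delta$ recorded after Definition~\ref{def1}. The key scaling input is that $u_{\varepsilon,R,a}$ behaves like $|x|^{\frac{2k-n}{2k}}=|x|^{1-n/4}$ near the origin (Proposition~\ref{propo05}, Corollary~\ref{cor04}), so derivatives of $u_{\varepsilon,R,a}$ carry the expected powers of $|x|$; the coefficients of $L^{u_{\varepsilon,R,a}}_\delta$ are therefore $O''$ of definite powers of $|x|$, and the conformal factor $\mathcal F$, the metric error $g-\delta=O(|x|^{d+1})$, and $R_g,\ |Ric_g|=O(|x|^{d-1})$ are all controlled by the conformal normal coordinate normalization \eqref{eq059}. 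I would organize the estimate term by term as follows.

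First, for $L_{\varepsilon,R,a}(v)-L_{g}^{u_{\varepsilon,R,a}}(v)$: comparing the Euclidean linearized operator with the $g$-linearized one, the difference involves the factors $g-\delta$, $R_g$, $Ric_g$ multiplying second derivatives of $v$ (times powers of $u_{\varepsilon,R,a}$). Using that $v\in C^{2,\alpha}_{(\gamma,\overline\gamma)}$ and that $u_{\varepsilon,R,a}\sim|x|^{1-n/4}$, each summand gains a factor $|x|^{d-1}$ or $|x|^{d+1}$; since $d=[n/2]$ this more than compensates the required shift, and the difference lands in the target space. Second, $H_\delta(u_{\varepsilon,R,a})-H_g(u_{\varepsilon,R,a})$ is estimated the same way, now with $v$ replaced by $u_{\varepsilon,R,a}$ itself: one reads off from \eqref{eq014} and \eqref{eq037} that the discrepancy between the two operators evaluated at $u_{\varepsilon,R,a}$ carries the curvature error factors $R_g,\ |Ric_g|=O(|x|^{d-1})$ together with appropriate powers of $u_{\varepsilon,R,a}$; the net homogeneity is $\gamma-1-\frac{3n}{4}$ (respectively $\overline\gamma-1-\frac{3n}{4}$) or better. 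Third, the terms $Q_{\varepsilon,R,a}(r^{-\overline\gamma}|x|^{\overline\gamma}v_\phi+h+v)$ and $L_g^{u_{\varepsilon,R,a}}(r^{-\overline\gamma}|x|^{\overline\gamma}v_\phi+h)$: here one uses Proposition~\ref{propo02} to control $v_\phi\in C^{2,\alpha}_2(B_r(0)\backslash\{0\})^\perp$, so that $r^{-\overline\gamma}|x|^{\overline\gamma}v_\phi\in C^{2,\alpha}_{\overline\gamma+2}$ and $h=O''(|x|^{\overline\gamma+1})$; then $L_g^{u_{\varepsilon,R,a}}$ applied to such a function produces at worst $|x|^{\overline\gamma-1}$ times coefficients of size $|x|^{1-n/4}$ or smaller, well inside the target, while $Q_{\varepsilon,R,a}$, being the integral remainder \eqref{eq039} of a quadratic-type nonlinearity, gains an extra factor of the (small) argument, hence is even better behaved.

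The main obstacle is bookkeeping the weights correctly and uniformly: one must check for \emph{every} term of the lengthy expressions \eqref{eq014} and \eqref{eq037} that the combined homogeneity — counting powers of $u_{\varepsilon,R,a}$, its derivatives, the curvature errors, $v$ and its derivatives, and $v_\phi,h$ — is at least $\gamma-1-\tfrac{3n}{4}$ on the high-frequency part and $\overline\gamma-1-\tfrac{3n}{4}$ on the low-frequency (top) part, so that the decomposition respects the direct sum $C^{0,\alpha}_{\mu}{}^\perp\oplus C^{0,\alpha}_{\nu}{}^\top$ defining the space, and that the Hölder seminorms transform consistently under the scaling $x\mapsto\sigma x$. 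The role of the hypothesis $\overline\gamma>1+n/4$ and the choice $\gamma=\delta_{n,2}+1-n/4-\varepsilon_1$ is precisely to make these inequalities hold with a little room to spare; since $d=[n/2]$ one has $d-1\ge n/2-2$, which is exactly what is needed to absorb the worst curvature error term (the one coming from $R_g u^3\Delta u$ in \eqref{eq014}) into the prescribed weight. Once all terms are checked, closure of the weighted Hölder space under these operations gives the claim.
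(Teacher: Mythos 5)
Your proposal takes essentially the same route as the paper's proof: a term-by-term weight count on the right-hand side of (\ref{eq10}), using $u_{\varepsilon,R,a}=O(|x|^{1-n/4})$, the conformal-normal-coordinate estimates $g-\delta=O(|x|^{d+1})$ and $R_g,\,Ric_g=O(|x|^{d-1})$, and the arithmetic relations among $d$, $\gamma$, $\overline\gamma$ (in the paper: $d\geq\overline\gamma+n/4-2$, $d+\gamma\geq\overline\gamma-1$, $2\gamma-1+n/4\geq\overline\gamma$), exactly as in the paper. The only caveat is cosmetic: the coefficients of $L_g^{u_{\varepsilon,R,a}}$ are cubic rather than linear in $u_{\varepsilon,R,a}$ (so of size roughly $|x|^{1-3n/4}$, not $|x|^{1-n/4}$), but your ``or smaller'' and the exponents you ultimately target absorb this.
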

\begin{proof} If $v\in C^{2,\alpha}_{(\gamma,\overline\gamma)} (B_r(0)\backslash\{0\})$, then $v=v^\perp+v^\top$ with $v^\perp=O(|x|^{\gamma})$ and $v^\top=O(|x|^{\overline\gamma})$. Since $\gamma<\overline\gamma$, we have $v=O(|x|^\gamma)$ and $r^{-\overline\gamma}|x|^{\overline\gamma}v_\phi+h+v=O(|x|^\gamma)$. Thus, using (\ref{eq014}) we obtain
\begin{equation}\label{eq040}
 H_\delta(u_{\varepsilon,R,a})-H_g(u_{\varepsilon,R,a}) = O(|x|^{d+1-n})= O(|x|^{\overline\gamma-1-\frac{3n}{4}}),
\end{equation}
since $d\geq\overline\gamma+n/4-2$. Now, by (\ref{eq037}) we get
$$\nonumber L_{\varepsilon,R,a}(v^\perp)- L_{g}^{u_{\varepsilon,R,a}}(v^\perp) = O(|x|^{d+\gamma-\frac{3n}{4}})=O(|x|^{\overline\gamma-1-\frac{3n}{4}}),$$
since $d+\gamma\geq\overline\gamma-1$, and also
$$L_{\varepsilon,R,a}(v^\top)- L_{g}^{u_{\varepsilon,R,a}}(v^\top) = O(|x|^{d+\overline\gamma-\frac{3n}{4}})=O(|x|^{\overline\gamma-1-\frac{3n}{4}}).$$
Using the definition (\ref{eq037}) and (\ref{eq039}) we obtain
\begin{equation*}\label{eq043}
 Q_{\varepsilon,R,a}(r^{-\overline\gamma}|x|^{\overline\gamma}v_\phi+h+v) = O(|x|^{2\gamma-2-\frac{n}{2}})=O(|x|^{\overline\gamma-1-\frac{3n}{4}}),
\end{equation*}
since $2\gamma-1+n/4\geq \overline\gamma$. Finally, using again (\ref{eq037}) and the fact that $r^{-\overline\gamma}|x|^{\overline\gamma}v_\phi+h=O(|x|^{\overline\gamma+1})$, we obtain that
$$L_g^{u_{\varepsilon,R,a}}( r^{-\overline\gamma}|x|^{\overline\gamma}v_\phi+h) = O(|x|^{\overline\gamma-\frac{3n}{4}}) = O(|x|^{\overline\gamma-1-\frac{3n}{4}}).$$

From theses estimates and the definition of the norm (\ref{eq058}), we obtain the result.
\end{proof}
Let $\gamma=\delta_{n,2}+1-n/4-\varepsilon_1$ and  $\overline{\gamma}=n/4+1+\varepsilon_1$, where $\varepsilon_1>0$ is small. To solve the equation (\ref{eq05}) we need to show that the map $N_\varepsilon(R,a,\phi,\cdot): C^{2,\alpha}_{(\gamma,\overline\gamma)} (B_r(0)\backslash\{0\})\rightarrow C^{2,\alpha}_{(\gamma,\overline\gamma)} (B_r(0)\backslash\{0\})$ has a fixed point for suitable parameters $\varepsilon$, $R$, $a$ and $\phi$. Here $N_\varepsilon(R,a,\phi,\cdot)$ is defined by
\begin{equation}\label{eq016}
\begin{array}{rcl}
N_\varepsilon(R,a,\phi,v) & = & G_{\varepsilon,R,r,a}\left( L_{\varepsilon,R,a}(v)- L_{g}^{u_{\varepsilon,R,a}}(v)\right.\\
\\
& - & Q_{\varepsilon,R,a}(r^{-\overline{\gamma}}|x|^{\overline\gamma}v_\phi+h+v) \\
\\
& + & H_\delta(u_{\varepsilon,R,a})-H_g(u_{\varepsilon,R,a})\\
\\
& - & \left.L_g^{u_{\varepsilon,R,a}}(r^{-\overline{\gamma}}|x|^{\overline\gamma}v_\phi+h)\right),
\end{array}
\end{equation}
where $G_{\varepsilon,R,r,a}$ is a right inverse for $L_{\varepsilon,R,a}$ given by Corollary \ref{cor001}.

\subsection{Complete Delaunay-type ends with constant $\sigma_2-$curvature}\label{sec12}

In this section we will show that the map (\ref{eq016}) has a fixed point. Next  we will prove the main result of this section. 

\begin{remark}
 To ensure some estimates that we will need, from now on, we will consider 
 $$R^{\frac{4-n}{4}}=2(1+b)\varepsilon^{\frac{4-n}{4}},$$
 with $|b|\leq 1/2$. Also, we will consider $r_\varepsilon=\varepsilon^s$, for some $0<s<1$.
\end{remark}

\begin{proping}\label{propo03}
Let $\gamma=\delta_{n,2}+1-n/4-\varepsilon_1$ and $\overline{\gamma}=n/4+1+\varepsilon_1$ be constants, where $\varepsilon_1>0$ is a small constant. There exists a constant $\varepsilon_0\in(0,1)$ such that for each $\varepsilon\in(0,\varepsilon_0)$, $\kappa>0$, $\tau>0$, $|b|<1/2$, $a\in\mathbb{R}^n$,  $\delta_1,\delta_2, l\in\mathbb R_+$ small, and $\phi\in C^{2,\alpha}(\mathbb{S}_{r_\varepsilon}^{n-1})^\perp$ with $3\delta_2>\max\{\delta_1,l\}$, $|a|r_\varepsilon^{1-\delta_2}\leq 1$ and $\|\phi\|_{(2,\alpha),r}\leq \kappa r_\varepsilon^{2+l-\delta_1}$, there exists a fixed point $u\in C^{2,\alpha}_{(\gamma,\overline\gamma)} (B_{r_\varepsilon}(0)\backslash\{0\})$ of the map $N_\varepsilon(R,a,\phi,\cdot)$ in the ball of radius $\tau r_\varepsilon^{2+l-\overline\gamma}$.
\end{proping}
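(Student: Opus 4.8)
The plan is to apply the Banach fixed point theorem to the map $N_\varepsilon(R,a,\phi,\cdot)$ on the closed ball $\overline{B}_{\tau r_\varepsilon^{2+l-\overline\gamma}}\subset C^{2,\alpha}_{(\gamma,\overline\gamma)}(B_{r_\varepsilon}(0)\backslash\{0\})$. By Corollary \ref{cor001}, the right inverse $G_{\varepsilon,R,r_\varepsilon,a}$ has operator norm bounded independently of $\varepsilon$ and $R$ (using the hypotheses $|b|<1/2$, $|a|r_\varepsilon^{1-\delta_2}\le 1$, which guarantee $|a|r_\varepsilon\le r_0$), so it suffices to show two things: first, that $N_\varepsilon(R,a,\phi,0)$ is small, of size $O(\tau r_\varepsilon^{2+l-\overline\gamma})$ with a constant that can be absorbed by choosing $\varepsilon_0$ small; and second, that $v\mapsto N_\varepsilon(R,a,\phi,v)$ is a contraction on that ball, with Lipschitz constant $\le 1/2$. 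Both reduce, via the bound on $\|G_{\varepsilon,R,r_\varepsilon,a}\|$, to estimating the five terms on the right-hand side of \eqref{eq10} in the norm $\|\cdot\|_{(0,\alpha),(\gamma-1-3n/4,\overline\gamma-1-3n/4)}$ defined in \eqref{eq058}.

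First I would estimate $N_\varepsilon(R,a,\phi,0)$. The relevant terms are the geometric error $H_\delta(u_{\varepsilon,R,a})-H_g(u_{\varepsilon,R,a})$, the boundary-data correction $L_g^{u_{\varepsilon,R,a}}(r_\varepsilon^{-\overline\gamma}|x|^{\overline\gamma}v_\phi+h)$, and the quadratic term $Q_{\varepsilon,R,a}(r_\varepsilon^{-\overline\gamma}|x|^{\overline\gamma}v_\phi+h)$. Using the expansions $g=\delta+O(|x|^{d+1})$, $R_g=O(|x|^{d-1})$, $|Ric_g|=O(|x|^{d-1})$ from the Weyl-vanishing hypothesis \eqref{eq059}, together with the Delaunay asymptotics in Corollary \ref{cor04} and the choice $R^{(4-n)/4}=2(1+b)\varepsilon^{(4-n)/4}$, one gets a bound on the geometric error of the form $C\varepsilon^{(n-4)/4}r_\varepsilon^{d+1-\overline\gamma+\cdots}$; substituting $r_\varepsilon=\varepsilon^s$ and $d=[n/2]\ge \overline\gamma+n/4-2$, and choosing $s$ and $\varepsilon_1$ appropriately, this is $\le \frac{\tau}{4}r_\varepsilon^{2+l-\overline\gamma}$ for $\varepsilon$ small. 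The term involving $v_\phi$ uses Proposition \ref{propo02} to control $\|v_\phi\|_{(2,\alpha),\mu,r_\varepsilon}\le Cr_\varepsilon^{-\mu}\|\phi\|_{(2,\alpha),r_\varepsilon}\le C\kappa r_\varepsilon^{2+l-\delta_1-\mu}$ with $\mu=\overline\gamma$ after the $r_\varepsilon^{-\overline\gamma}|x|^{\overline\gamma}$ rescaling, and the term $h=O(|x|^{\overline\gamma+1})$ is handled directly; applying $L_g^{u_{\varepsilon,R,a}}$ costs a factor $|x|^{-3n/4}$ and one exploits $3\delta_2>\max\{\delta_1,l\}$ so the $v_\phi$ and $h$ contributions are also absorbed. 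The quadratic term $Q$ in these inputs is then of even smaller order.

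Next I would check the contraction estimate. For $v_1,v_2$ in the ball, $N_\varepsilon(R,a,\phi,v_1)-N_\varepsilon(R,a,\phi,v_2)=G_{\varepsilon,R,r_\varepsilon,a}$ applied to $(L_{\varepsilon,R,a}-L_g^{u_{\varepsilon,R,a}})(v_1-v_2)$ plus $Q_{\varepsilon,R,a}(r_\varepsilon^{-\overline\gamma}|x|^{\overline\gamma}v_\phi+h+v_1)-Q_{\varepsilon,R,a}(r_\varepsilon^{-\overline\gamma}|x|^{\overline\gamma}v_\phi+h+v_2)$. The first piece gains a factor $r_\varepsilon^{d-\text{(loss)}}$ from $g-\delta=O(|x|^{d+1})$ as in Lemma \ref{lem05}, hence is $\le\frac14\|v_1-v_2\|$ for $\varepsilon$ small. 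For the quadratic piece I use \eqref{eq039}: the difference of $Q$'s is an integral of $\frac{d}{ds}L^{u_{\varepsilon,R,a}+s(\cdots)}$ evaluated along a segment, so it is linear in $v_1-v_2$ with coefficient controlled by the sum of the sizes of the arguments, which are $O(r_\varepsilon^{2+l-\delta_1}+r_\varepsilon^{\overline\gamma+1}+\tau r_\varepsilon^{2+l-\overline\gamma})$ after accounting for weights; since $2\gamma-1+n/4\ge\overline\gamma$ (used already in Lemma \ref{lem05}) this gives a factor that tends to $0$ with $\varepsilon$, hence $\le\frac14\|v_1-v_2\|$. Combining, $N_\varepsilon$ is a $\frac12$-contraction of the ball into itself, so it has a unique fixed point $u$ with $\|u\|_{(2,\alpha),(\gamma,\overline\gamma)}\le\tau r_\varepsilon^{2+l-\overline\gamma}$. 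I would also note that positivity of $u_{\varepsilon,R,a}+r_\varepsilon^{-\overline\gamma}|x|^{\overline\gamma}v_\phi+h+u$ follows since $u_{\varepsilon,R,a}$ is bounded below by $\sim\varepsilon^{(n-4)/4}$ on $B_{r_\varepsilon}$ while the perturbations are of strictly higher order in $\varepsilon$.

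The main obstacle is bookkeeping the exponents: one must simultaneously satisfy $d+1-n\ge\overline\gamma-1-3n/4$, $d+\gamma\ge\overline\gamma-1$, $2\gamma-1+n/4\ge\overline\gamma$, the smallness conditions $3\delta_2>\max\{\delta_1,l\}$ and $|a|r_\varepsilon^{1-\delta_2}\le1$, and the requirement that after substituting $r_\varepsilon=\varepsilon^s$ and $R^{(4-n)/4}=2(1+b)\varepsilon^{(4-n)/4}$ every error term carries a strictly positive power of $\varepsilon$ beyond the target size $\tau r_\varepsilon^{2+l-\overline\gamma}$. Verifying that the window for $(\gamma,\overline\gamma,\varepsilon_1,s)$ is nonempty — which is exactly where the hypothesis $d=[n/2]$, equivalently the Weyl-vanishing order $[\frac{n-4}{2}]$, is used — is the delicate point; the rest is a routine application of the contraction mapping principle.
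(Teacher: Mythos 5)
Your proposal follows essentially the same route as the paper's proof: a contraction mapping argument on the ball of radius $\tau r_\varepsilon^{2+l-\overline\gamma}$, estimating $N_\varepsilon(R,a,\phi,0)$ via the geometric error $H_\delta-H_g$ (controlled by the Weyl-vanishing order $d$), the terms $L_g^{u_{\varepsilon,R,a}}(r_\varepsilon^{-\overline\gamma}|x|^{\overline\gamma}v_\phi+h)$ and $Q_{\varepsilon,R,a}$ (where $3\delta_2>\max\{\delta_1,l\}$ and the Delaunay asymptotics enter), and then a $\tfrac12$-Lipschitz bound on $(L_{\varepsilon,R,a}-L_g^{u_{\varepsilon,R,a}})$ plus the $Q$-difference, all filtered through the uniformly bounded right inverse of Corollary \ref{cor001}. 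The exponent conditions you list are exactly the ones the paper verifies, so the argument is correct and matches the paper's proof in structure.
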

\begin{proof} First remember that we are using the norm
$$\|v\|_{(2,\alpha),(\gamma,\overline\gamma)}={r_\varepsilon}^{\gamma-\overline\gamma}\|v^\perp\|_{(2,\alpha),\gamma}+\|v^\top\|_{(2,\alpha),\overline\gamma}.$$
Then, since $r_\varepsilon<1$ and $\gamma<\overline\gamma$, we obtain that
\begin{equation*}\label{eq048}
 \|v\|_{(2,\alpha),[\sigma,2\sigma]}\leq \sigma^\gamma\|v^\perp\|_{(2,\alpha),\gamma}+\sigma^{\overline\gamma}\|v^\top\|_{(2,\alpha),\overline\gamma}\leq \sigma^\gamma\|v\|_{(2,\alpha),(\gamma,\overline\gamma)}.
\end{equation*}

Note that
\begin{equation*}
\begin{array}{rcl}
N_\varepsilon(R,a,\phi,0) & = & G_{\varepsilon,R,r,a}\left( - Q_{\varepsilon,R,a}({r_\varepsilon}^{-\overline\gamma}|x|^{\overline\gamma}v_\phi+h) \right.\\
\\
& + & H_\delta(u_{\varepsilon,R,a})-H_g(u_{\varepsilon,R,a})\\
\\
& - &\left. L_g^{u_{\varepsilon,R,a}}({r_\varepsilon}^{-\overline\gamma}|x|^{\overline\gamma}v_\phi+h)\right)
\end{array}
\end{equation*}
and
$$N_\varepsilon(R,a,\phi,v_1)- N_\varepsilon(R,a,\phi,v_2)= G_{\varepsilon,R,r,a}\left(L_{\varepsilon,R,a}(v_1-v_2)- L_{g}^{u_{\varepsilon,R,a}}(v_1-v_2)\right.$$
$$\left.-\displaystyle\int_0^1\frac{d}{dt} Q_{\varepsilon,R,a}({r_\varepsilon}^{-\overline\gamma}|x|^{\overline\gamma}v_\phi+h+v_2+t(v_1-v_2))dt\right).$$

By (\ref{eq040}) we get%By (\ref{eq014}), (\ref{eq039}), (\ref{eq73}), and (\ref{eq017}), we have
$$\sigma^{-\mu+1+\frac{3n}{4}} \|H_\delta(u_{\varepsilon,R,a})-H_g(u_{\varepsilon,R,a})\|_{(0,\alpha),[\sigma,2\sigma]} \leq C \sigma^{2+d-\mu-\frac{n}{4}}.$$

Since $d>n/4>\gamma+n/4-2$ then for $\mu=\gamma$ we have
\begin{equation}\label{eq041}
\begin{array}{c}
 \sigma^{-\gamma+1+\frac{3n}{4}}{r_\varepsilon}^{\gamma-\overline\gamma} \|H_\delta(u_{\varepsilon,R,a})-H_g(u_{\varepsilon,R,a})\|_{(0,\alpha),[\sigma,2\sigma]} \leq C {r_\varepsilon}^{d-\frac{n}{4}-l}{r_\varepsilon}^{2+l-\overline\gamma}
\end{array}
\end{equation}
and for $\mu=\overline\gamma$ we have
\begin{equation}\label{eq042}
 \sigma^{-\overline\gamma+1+\frac{3n}{4}} \|H_\delta(u_{\varepsilon,R,a})-H_g(u_{\varepsilon,R,a})\|_{(0,\alpha),[\sigma,2\sigma]} \leq C {r_\varepsilon}^{d-\frac{n}{4}-l}{r_\varepsilon}^{2+l-\overline\gamma}.
\end{equation}

Now, from (\ref{eq039}) we obtain
\begin{equation}\label{eq044}
 \begin{array}{c}
  \left\|\displaystyle\int_0^1\frac{d}{dt} Q_{\varepsilon,R,a}(v_2+t(v_1-v_2))dt \right\|_{(0,\alpha),[\sigma,2\sigma]}\\
  
\leq C\sigma^{-2-\frac{n}{2}}\left(\|v_1\|_{(2,\alpha),[\sigma,2\sigma]}+\|v_2\|_{(2,\alpha),[\sigma,2\sigma]}\right)\|v_1-v_2\|_{(2,\alpha),[\sigma,2\sigma]}.
 \end{array}
\end{equation}
Thus, using that $|h|\leq C{r_\varepsilon}^{-\overline\gamma-1}|x|^{\overline\gamma+3}+C{r_\varepsilon}^{-\overline\gamma+1}|x|^{\overline\gamma+1}$, we have
$$\sigma^{-\mu+1+\frac{3n}{4}} \|Q_{\varepsilon,R,a}({r_\varepsilon}^{-\overline\gamma}|x|^{\overline\gamma}v_\phi+h)\|_{(0,\alpha),[\sigma,2\sigma]} \leq $$
$$\begin{array}{ll}
   \leq C\left( {r_\varepsilon}^{3-\mu+\frac{n}{4}+2l-2\delta_1}+{r_\varepsilon}^{3-\mu+\frac{n}{4}}\right)\leq C{r_\varepsilon}^{1+\overline\gamma-\mu+\frac{n}{4}-l}{r_\varepsilon}^{2+l-\overline\gamma},
  \end{array}
$$
and this implies that
\begin{equation}\label{eq046}
 \|Q_{\varepsilon,R,a}({r_\varepsilon}^{-\overline\gamma}|x|^{\overline\gamma}v_\phi+h)\|_{(0,\alpha),(\gamma-1-\frac{3n}{4},\overline\gamma-1-\frac{3n}{4})}\leq C{r_\varepsilon}^{1+\frac{n}{4}-l}{r_\varepsilon}^{2+l-\overline\gamma}.
\end{equation}

Now, by (\ref{eq037}), we have that
$$\|L_g^{u_{\varepsilon,R,a}}({r_\varepsilon}^{-\overline\gamma}|x|^{\overline\gamma}v_\phi+h)\|_{(0,\alpha),[\sigma,2\sigma]} $$
$$
\leq C\left(\sigma^{-4}\|u_{\varepsilon,R,a}\|^3_{(2,\alpha),[\sigma,2\sigma]}+\|u_{\varepsilon,R,a}^{\frac{3n+4}{n-4}}\|_{(2,\alpha),[\sigma,2\sigma]}\right)\times$$
$$(\sigma^{\overline\gamma+2}{r_\varepsilon}^{-\overline\gamma+l-\delta_1}+\sigma^{\overline\gamma+1} {r_\varepsilon}^{-\overline\gamma+1}+\sigma^{\overline\gamma+3}{r_\varepsilon}^{-\overline\gamma-1})$$

Note that, by Corollary \ref{cor02}, we obtain that
$$\|u_{\varepsilon,R,a}\|_{(2,\alpha),[\sigma,2\sigma]}\leq \|u_{\varepsilon,R}\|_{(2,\alpha),[\sigma,2\sigma]}+C|a|\sigma^{2-\frac{n}{4}}$$

If $r_\varepsilon^{1+\lambda}\leq |x|\leq r_\varepsilon$ with $\lambda>0$, then
$$(1-s)\log\varepsilon+\log(2+2b)^{\frac{4}{4-n}}\leq\log(|x|^{-1} R)\leq(1-s(1+\lambda))\log\varepsilon+\log(2+2b)^{\frac{4}{4-n}}<0.$$
Thus, using (\ref{eq002}), we get
$$v_\varepsilon(-\log|x|+\log R)\leq\varepsilon^{\frac{n-4}{4}s}(2+2b).$$

Therefore,
$$u_{\varepsilon,R}(x)\leq C|x|^{\frac{4-n}{4}}r_\varepsilon^{\frac{n-4}{4}}(2+2b)$$
and so
$$\|u_{\varepsilon,R,a}\|_{(2,\alpha),[\sigma,2\sigma]}\leq C\sigma^{1-\frac{n}{4}}(r_\varepsilon^{\frac{n}{4}-1}+|a|\sigma)\leq C\sigma^{1-\frac{n}{4}+\delta_2}.$$
This implies that
$$\sigma^{-\mu-3+\frac{3n}{4}}\|u_{\varepsilon,R,a}\|_{(2,\alpha),[\sigma,2\sigma]}^3(\sigma^{\overline\gamma+2}{r_\varepsilon}^{-\overline\gamma+l-\delta_1}+\sigma^{\overline\gamma+1}{r_\varepsilon}^{-\overline\gamma+1}+\sigma^{\overline\gamma+3}{r_\varepsilon}^{-\overline\gamma-1})$$
$$\leq C(\sigma^{3\delta_2-\mu+\overline\gamma+2}{r_\varepsilon}^{-\overline\gamma+l-\delta_1}+\sigma^{3\delta_2-\mu+\overline\gamma+1}{r_\varepsilon}^{-\overline\gamma+1}+\sigma^{3\delta_2-\mu+\overline\gamma+3}{r_\varepsilon}^{-\overline\gamma-1}).$$

For $\mu=\gamma$, we obtain
$$\sigma^{-\gamma-3+\frac{3n}{4}}{r_\varepsilon}^{\gamma-\overline\gamma}\|u_{\varepsilon,R,a}\|_{(2,\alpha),[\sigma,2\sigma]}^3(\sigma^{\overline\gamma}{r_\varepsilon}^{-\overline\gamma}\|v_\phi\|_{(2,\alpha),[\sigma,2\sigma]}+\|h\|_{(2,\alpha),[\sigma,2\sigma]})$$
$$\leq C({r_\varepsilon}^{3\delta_2-\delta_1}+{r_\varepsilon}^{3\delta_2-l}){r_\varepsilon}^{2+l-\overline\gamma}.$$
since $3\delta_2>\max\{\delta_1,l\}$. For $\mu=\overline\gamma$, we have an analogous inequality.

If $0\leq \sigma\leq {r_\varepsilon}^{1+\lambda}$, then
$$\sigma^{-\mu-3+\frac{3n}{4}}\|u_{\varepsilon,R,a}\|_{(2,\alpha),[\sigma,2\sigma]}^3(\sigma^{\overline\gamma+2}{r_\varepsilon}^{-\overline\gamma+l-\delta_1}+\sigma^{\overline\gamma+1}{r_\varepsilon}^{-\overline\gamma+1}+\sigma^{\overline\gamma+3}{r_\varepsilon}^{-\overline\gamma-1})$$
$$\leq C(\sigma^{\overline\gamma-\mu+2}{r_\varepsilon}^{-\overline\gamma+l-\delta_1}+\sigma^{\overline\gamma-\mu+1}{r_\varepsilon}^{-\overline\gamma+1}+\sigma^{\overline\gamma-\mu+3}{r_\varepsilon}^{-\overline\gamma-1}),$$
which implies that for $\mu=\gamma$, we have
$$\sigma^{-\gamma-3+\frac{3n}{4}}{r_\varepsilon}^{\gamma-\overline\gamma}\|u_{\varepsilon,R,a}\|_{(2,\alpha),[\sigma,2\sigma]}^3(\sigma^{\overline\gamma+2}{r_\varepsilon}^{-\overline\gamma+l-\delta_1}+\sigma^{\overline\gamma+1}{r_\varepsilon}^{-\overline\gamma+1}+\sigma^{\overline\gamma+3}{r_\varepsilon}^{-\overline\gamma-1})$$
$$\leq  C({r_\varepsilon}^{(\overline\gamma-\gamma+2)\lambda-\delta_1}+{r_\varepsilon}^{(\overline\gamma-\gamma+1)\lambda-l}+{r_\varepsilon}^{(\overline\gamma-\mu+3)\lambda-l}){r_\varepsilon}^{2+l-\overline\gamma}.$$
and a analogous inequality for $\mu=\overline\gamma$. In an analogous way we estimate the term with $u_{\varepsilon,R,a}^{\frac{3n+4}{n-4}}$.

Therefore,
\begin{equation}\label{eq045}
 \|L_g^{u_{\varepsilon,R,a}}({r_\varepsilon}^{-\overline\gamma}|x|^{\overline\gamma}v_\phi+h)\|_{(0,\alpha),(\gamma,\overline\gamma)} \leq C{r_\varepsilon}^{c}{r_\varepsilon}^{2+l-\overline\gamma},
\end{equation}
with $c>0$. From (\ref{eq041}), (\ref{eq042}), (\ref{eq046}), (\ref{eq045}) and using inequality (\ref{eq047}) given by Corollary \ref{cor001}, we get that
\begin{equation}\label{eq050}
 \|N_\varepsilon(R,a,\phi,0 )\|_{(2,\alpha),(\gamma,\overline\gamma)} \leq  \frac{1}{2}\tau r_\varepsilon^{2+l-\overline\gamma},
\end{equation}
for $\varepsilon>0$ small enough. 

Now, by (\ref{eq037}) and using the inequality (\ref{eq048}), we find
$$\sigma^{-\mu+1+\frac{3n}{4}} \left\|L_{\varepsilon,R,a}(v_1-v_2)- L_{g}^{u_{\varepsilon,R,a}}(v_1-v_2)\right\|_{(0,\alpha),[\sigma,2\sigma]}$$
$$\leq C\sigma^{d+1-\mu}\|v_1-v_2 \|_{(2,\alpha),[\sigma,2\sigma]}\leq \sigma^{d+1+\gamma-\mu}\|v_1-v_2 \|_{(2,\alpha),(\gamma,\overline\gamma)},$$
and since $1+d+\gamma-\overline\gamma>0$, this implies that
\begin{equation}\label{eq049}
\begin{array}{c}
 \left\|L_{\varepsilon,R,a}(v_1-v_2)- L_{g}^{u_{\varepsilon,R,a}}(v_1-v_2)\right\|_{(0,\alpha),(\gamma-1-\frac{3n}{4},\overline\gamma-1-\frac{3n}{4})}\\
 \leq C{r_\varepsilon}^{1+d+\gamma-\overline\gamma}\|v_1-v_2 \|_{(2,\alpha),(\gamma,\overline\gamma)}.
\end{array}
\end{equation}

Now, using (\ref{eq044}) and the fact that 
$$\|v^\top\|_{(2,\alpha),[\sigma,2\sigma]}\leq \sigma^\gamma {r_\varepsilon}^{2+l-\gamma} \;\mbox{ and }\;\|v^\perp\|_{(2,\alpha),[\sigma,2\sigma]}\leq \sigma^\gamma {r_\varepsilon}^{2+l-\gamma},$$ 
for any $v\in C^{2,\alpha}_{(\gamma,\overline\gamma)}(B_{r_\varepsilon}(0)\backslash\{0\})$, we get

$$   \left\|\displaystyle\int_0^1\frac{d}{dt} Q_{\varepsilon,R,a}({r_\varepsilon}^{-\overline\gamma}|x|^{\overline\gamma}v_\phi+h+v_2+t(v_1-v_2))dt \right\|_{(0,\alpha),[\sigma,2\sigma]}$$
$$\leq C\left(\sigma^{\overline\gamma+\gamma-\frac{n}{2}}{r_\varepsilon}^{l-\overline\gamma-\delta_1}+\sigma^{\gamma+\overline\gamma-1-\frac{n}{2}}{r_\varepsilon}^{1-\overline\gamma}+\sigma^{\gamma+\overline\gamma+1-\frac{n}{2}}{r_\varepsilon}^{-1-\overline\gamma}\right.$$
$$\left.+\sigma^{-2-\frac{n}{2}+2\gamma}{r_\varepsilon}^{2-\gamma+l}\right)\|v_1-v_2\|_{(2,\alpha),(\gamma,\overline\gamma)}.$$

Therefore, we obtain
$$ \sigma^{-\mu+1+\frac{3n}{4}}   \left\|\displaystyle\int_0^1\frac{d}{dt} Q_{\varepsilon,R,a}({r_\varepsilon}^{-\overline\gamma}|x|^{\overline\gamma}v_\phi+h+v_2+t(v_1-v_2))dt \right\|_{(0,\alpha),[\sigma,2\sigma]}$$
$$\leq  C{r_\varepsilon}^{1+\gamma-\mu+\frac{n}{4}}\|v_1-v_2\|_{(2,\alpha),(\gamma,\overline\gamma)} $$
which implies the inequality for $\mu=\gamma$ and $\mu=\overline\gamma$.

From this and (\ref{eq049}) we obtain
\begin{equation}\label{eq051}
 \|H_\varepsilon(R,a,\phi,v_1)- H_\varepsilon(R,a,\phi,v_2) \|_{(2,\alpha),(\gamma,\overline\gamma)}\leq \frac{1}{2} \|v_1-v_2\|_{(2,\alpha),(\gamma,\overline\gamma)},
\end{equation}
for $\varepsilon>0$ small enough. Therefore, using (\ref{eq050}) and (\ref{eq051}) we obtain the result.
\end{proof}
We summarize the main result of this section in the next theorem.
\begin{theorem}\label{teo01}
Let $\gamma=\delta_{n,2}+1-n/4-\varepsilon_1$ and  $\overline{\gamma}=n/4+1+\varepsilon_1$ be constants, where $\varepsilon_1>0$ is a small constant. There exists a constant $\varepsilon\in(0,1)$ such that for each $\varepsilon\in(0,\varepsilon_0)$, $\kappa>0$, $\tau>0$, $|b|<1/2$, $a\in\mathbb{R}^n$,  $\delta_1,\delta_2,l\in\mathbb R_+$ small, and $\phi\in C^{2,\alpha}(\mathbb{S}_{r_\varepsilon}^{n-1})^\perp$ with $3\delta_2>\max\{\delta_1,l\}$, $|a|r_\varepsilon^{1-\delta_2}\leq 1$ and $\|\phi\|_{(2,\alpha),r_\varepsilon}\leq \kappa r_\varepsilon^{2+l-\delta_1}$, there exists a solution $U_{\varepsilon,R,a,\phi}\in C^{2,\alpha}_{(\gamma,\overline\gamma)}(B_{r}(0)\backslash\{0\})$ for the equation
$$\left\{\begin{array}{lcl}
H_g(u_{\varepsilon,R,a}+r_\varepsilon^{-\overline\gamma}|x|^{\overline\gamma}v_{\phi}+h+ U_{\varepsilon,R,a,\phi})=0 & \mbox{ in } & B_{r_\varepsilon}(0)\backslash\{0\}\\
\pi''_{r_\varepsilon}((r_\varepsilon^{-\overline\gamma}|x|^{\overline\gamma}v_{\phi}+ U_{\varepsilon,R,a,\phi})|_{\partial B_{r_\varepsilon}(0)})=\phi & \mbox{ on } & \partial B_{r_\varepsilon}(0)
\end{array}\right.,$$
such that
\begin{equation}\label{eq71}
\|U_{\varepsilon,R,a,\phi}\|_{(2,\alpha),(\gamma,\overline\gamma),r_\varepsilon}\leq \tau r_\varepsilon^{2+l-\overline\gamma}
\end{equation}
and
\begin{equation}\label{eq82}
\|U_{\varepsilon,R,a,\phi_1}- U_{\varepsilon,R,a,\phi_2}\|_{(2,\alpha),(\gamma,\overline\gamma),r_\varepsilon}\leq Cr_\varepsilon^{\delta_4-\overline\gamma} \|\phi_1-\phi_2\|_{(2,\alpha),r},
\end{equation}
for some constant $\delta_4>0$ and $p>0$ is small.
\end{theorem}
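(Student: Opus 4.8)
The plan is to deduce the theorem directly from the fixed-point Proposition \ref{propo03} together with a contraction estimate in the parameter $\phi$. First I would observe that, by the very construction of the map $N_\varepsilon(R,a,\phi,\cdot)$ in \eqref{eq016} and the mapping properties of the right inverse $G_{\varepsilon,R,r,a}$ given by Corollary \ref{cor001}, any fixed point $U_{\varepsilon,R,a,\phi}\in C^{2,\alpha}_{(\gamma,\overline\gamma)}(B_{r_\varepsilon}(0)\backslash\{0\})$ of $N_\varepsilon(R,a,\phi,\cdot)$ automatically solves $L_{\varepsilon,R,a}(U)=L_{\varepsilon,R,a}(U)-L_g^{u_{\varepsilon,R,a}}(U)-Q_{\varepsilon,R,a}(\cdots)+H_\delta(u_{\varepsilon,R,a})-H_g(u_{\varepsilon,R,a})-L_g^{u_{\varepsilon,R,a}}(\cdots)$, which by the Taylor expansion identity \eqref{eq10}--\eqref{eq027} (recall $H_\delta(u_{\varepsilon,R,a})=0$) is equivalent to $H_g(u_{\varepsilon,R,a}+r_\varepsilon^{-\overline\gamma}|x|^{\overline\gamma}v_\phi+h+U)=0$ in $B_{r_\varepsilon}(0)\backslash\{0\}$. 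The boundary condition $\pi''_{r_\varepsilon}((r_\varepsilon^{-\overline\gamma}|x|^{\overline\gamma}v_\phi+U)|_{\partial B_{r_\varepsilon}})=\phi$ follows because $v_\phi=\phi$ on $\partial B_{r_\varepsilon}(0)$ by Proposition \ref{propo02}, while $G_{\varepsilon,R,r,a}$ produces a $U$ with $\pi''_{r_\varepsilon}(U|_{\partial B_{r_\varepsilon}})=0$ by Corollary \ref{cor001}, so the high-frequency part of the prescribed Dirichlet datum is exactly $\phi$ (note $r_\varepsilon^{-\overline\gamma}|x|^{\overline\gamma}v_\phi$ restricts to $v_\phi$, whose high-frequency part is $\phi$). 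Thus existence of $U_{\varepsilon,R,a,\phi}$ and the bound \eqref{eq71} are precisely the content of Proposition \ref{propo03}, applied with the radius $\tau r_\varepsilon^{2+l-\overline\gamma}$.

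The only genuinely new point is the Lipschitz dependence \eqref{eq82} on $\phi$. Here I would write, for two admissible boundary data $\phi_1,\phi_2$, $U_i=N_\varepsilon(R,a,\phi_i,U_i)$, and subtract:
\begin{align*}
U_1-U_2 &= N_\varepsilon(R,a,\phi_1,U_1)-N_\varepsilon(R,a,\phi_2,U_2)\\
&= \big(N_\varepsilon(R,a,\phi_1,U_1)-N_\varepsilon(R,a,\phi_1,U_2)\big) + \big(N_\varepsilon(R,a,\phi_1,U_2)-N_\varepsilon(R,a,\phi_2,U_2)\big).
\end{align*}
The first bracket is controlled by the contraction estimate \eqref{eq051} already established in the proof of Proposition \ref{propo03}, giving a factor $\tfrac12\|U_1-U_2\|$, which can be absorbed into the left-hand side. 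For the second bracket I would expand the difference of the source terms: replacing $\phi_1$ by $\phi_2$ changes only the occurrences of $v_\phi$, through the linear term $L_g^{u_{\varepsilon,R,a}}(r_\varepsilon^{-\overline\gamma}|x|^{\overline\gamma}(v_{\phi_1}-v_{\phi_2}))$ and through the quadratic remainder $Q_{\varepsilon,R,a}$ evaluated at arguments differing by $r_\varepsilon^{-\overline\gamma}|x|^{\overline\gamma}(v_{\phi_1}-v_{\phi_2})$. Using the harmonic estimate \eqref{eq73} of Proposition \ref{propo02} to bound $v_{\phi_1}-v_{\phi_2}=v_{\phi_1-\phi_2}$ by $Cr_\varepsilon^{-\mu}\|\phi_1-\phi_2\|_{(2,\alpha),r_\varepsilon}$, together with the same pointwise bounds on $u_{\varepsilon,R,a}$ (via Corollary \ref{cor02} and the Delaunay estimates of Corollary \ref{cor04}) that were used in the derivation of \eqref{eq045}--\eqref{eq046}, and then applying the $\varepsilon$- and $R$-independent bound \eqref{eq047} on $G_{\varepsilon,R,r,a}$, one gets a bound of the form $Cr_\varepsilon^{\delta_4-\overline\gamma}\|\phi_1-\phi_2\|_{(2,\alpha),r_\varepsilon}$ for some $\delta_4>0$ (the positive power coming from the gap between $\overline\gamma$ and the actual decay rate of $v_\phi$, exactly as in the exponent bookkeeping of Proposition \ref{propo03}).

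I expect the main obstacle to be this exponent bookkeeping in the second bracket: one must check that every term produced by $L_g^{u_{\varepsilon,R,a}}$ and $Q_{\varepsilon,R,a}$ acting on the $v_\phi$-difference carries a strictly positive net power of $r_\varepsilon$ after dividing by $r_\varepsilon^{\overline\gamma}$ and applying the weighted norm \eqref{eq058}, uniformly in $\sigma\in(0,r_\varepsilon]$ and across the two regimes $\sigma\le r_\varepsilon^{1+\lambda}$ and $r_\varepsilon^{1+\lambda}\le\sigma\le r_\varepsilon$ separated in the proof of Proposition \ref{propo03}. This is where the hypotheses $\|\phi\|_{(2,\alpha),r_\varepsilon}\le\kappa r_\varepsilon^{2+l-\delta_1}$, $|a|r_\varepsilon^{1-\delta_2}\le1$ and $3\delta_2>\max\{\delta_1,l\}$ are used, in the same way as before; the extra smallness for $\phi_1-\phi_2$ in place of $\phi$ is free because we are only tracking the \emph{Lipschitz} constant, not an absolute bound. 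Once all powers are seen to be positive one collects them into a single $\delta_4>0$, absorbs the $\tfrac12\|U_1-U_2\|$ term, and obtains \eqref{eq82}. The claim that $U_{\varepsilon,R,a,\phi}$ is positive on $B_{r_\varepsilon}(0)\backslash\{0\}$ (needed for the metric interpretation) follows from \eqref{eq71}: on $B_{r_\varepsilon}(0)\backslash\{0\}$ the leading term $u_{\varepsilon,R,a}$ is bounded below by a positive multiple of $|x|^{(4-n)/4}$ by Corollary \ref{cor04}, while $r_\varepsilon^{-\overline\gamma}|x|^{\overline\gamma}v_\phi+h+U_{\varepsilon,R,a,\phi}=O(|x|^\gamma)$ with $\gamma>(4-n)/4$ for $\varepsilon_1$ small, hence is lower-order near the puncture and uniformly small for $\varepsilon$ small on the rest of the ball.
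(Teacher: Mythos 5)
Your proposal follows essentially the same route as the paper: existence and the bound (\ref{eq71}) are taken directly from the fixed point of $N_\varepsilon(R,a,\phi,\cdot)$ in Proposition \ref{propo03}, and the Lipschitz estimate (\ref{eq82}) is obtained by the same splitting the paper uses — absorbing the contraction part via (\ref{eq051}) and estimating the $\phi$-dependence through $L_g^{u_{\varepsilon,R,a}}(r_\varepsilon^{-\overline\gamma}|x|^{\overline\gamma}v_{\phi_1-\phi_2})$ and the difference of the $Q_{\varepsilon,R,a}$ terms, using (\ref{eq73}), (\ref{eq047}) and the exponent bookkeeping from the proof of Proposition \ref{propo03}. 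Your additional verifications (the boundary condition via $\pi''_{r_\varepsilon}$ and the positivity of the full conformal factor) are consistent with, though not required by, the paper's argument.
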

\begin{proof}
The solution $U_{\varepsilon,R,a,\phi}$ is the fixed point of the map $N_\varepsilon(R,a,\phi,\cdot)$ given by Proposition \ref{propo03} with the estimate (\ref{eq71}).

Using the fact that $U_{\varepsilon,R,a,\phi}$ is a fixed point of the map $N_\varepsilon(R,a,\phi,\cdot)$ we can show that
$$\|U_{\varepsilon,R,a,\phi_1}- U_{\varepsilon,R,a,\phi_2}\|_{(2,\alpha),(\gamma,\overline{\gamma})}$$
$$\leq 2\|N_\varepsilon(R,a,\phi_1, U_{\varepsilon,R,a,\phi_2})- N_\varepsilon(R,a,\phi_2, U_{\varepsilon,R,a,\phi_2})\|_{(2,\alpha),(\gamma,\overline{\gamma})}$$
$$\begin{array}{l}
   \leq  C \left\| L_g^{u_{\varepsilon,R,a}}(r_\varepsilon^{-\overline\gamma}|x|^{\overline\gamma}v_{\phi_1-\phi_2})\right \|_{(0,\alpha),(\gamma-1-\frac{3n}{4},\overline{\gamma}-1-\frac{3n}{4})}\\
   \\
   +\left\| \displaystyle\int_0^1\frac{d}{dt}
Q_{\varepsilon,R,a}(U_{\varepsilon,R,a,\phi_2}+r_\varepsilon^{-\overline\gamma}|x|^{\overline\gamma}v_{\phi_1+t(\phi_2-\phi_1)}+h)dt\right \|_{(0,\alpha),(\gamma-1-\frac{3n}{4},\overline{\gamma}-1-\frac{3n}{4})}
  \end{array}
$$

From this and the estimates given by the proof of the Proposition \ref{propo03} it follows (\ref{eq82}).
\end{proof}
We will write the full conformal factor of the resulting constant scalar  curvature metric with respect to the metric $g$ as
\begin{equation*}\label{eq019}
\mathcal{A}_{\varepsilon}(R,a,\phi):=u_{\varepsilon,R,a}+ r_{\varepsilon}^{-\overline\gamma}|x|^{\overline\gamma}v_\phi+h+U_{\varepsilon,R,a,\phi},
\end{equation*}
in conformal normal coordinates. The previous analysis says that the metric 
\begin{equation*}\label{eq020}
\hat{g}=\mathcal{A}_{\varepsilon}(R,a,\phi)^{\frac{8}{n-4}}g
\end{equation*}
is defined in $\overline{B_{r_\varepsilon}(p)}\backslash\{p\}\subset M$, it is complete and has $\sigma_2(A_{\hat{g}})=\frac{n(n-1)}{8}$. The completeness follows from the estimate $\mathcal{A}_{\varepsilon}(R,a,\phi)\geq c|x|^{\frac{4-n}{4}},$
for some constant $c>0$.

\section{Exterior Analysis}\label{sec04}
\subsection{Analysis in $M\backslash B_r(p)$}\label{sec10}
In contrast with the previous section, in which we worked with conformal normal coordinates, in this section it is better to work with the constant $\sigma_2-$curvature metric, since in this case the constant function 1 satisfies $H_{g_0}(1)=0$. Hence, in this section $(M^n,g_0)$ is an $n-$dimensional nondegenerate closed Riemannian manifold with $\sigma_2(A_{g_0})=\frac{n(n-1)}{8}$ which is conformal to some $2-$admissible metric. From this and using (\ref{eq010}) we find that $R_{g_0}>0$ in $M$.

Let $r_1\in(0,1)$ and $\Psi:B_{r_1}(0)\rightarrow M$ be a coordinate system with respect to $g=\mathcal F^{\frac{8}{n-4}}g_0$ on $M$ cetered at $p$, where $\mathcal F$ is defined in Section \ref{sec02} and it satisfies $\mathcal F=1+O(|x|^2)$ in $g-$normal coordinates. Since $g_{ij}=1+O(|x|^2)$ then $(g_0)_{ij}=\delta_{ij}+O(|x|^2)$.

We start this section remember a result from \cite{M} (see also \cite{MR2337310} and \cite{MR2639545}).
\begin{proping}\label{propo002}
 Assume that $\varphi\in C^{2,\alpha}(\mathbb S_r^{n-1})$ and let $\mathcal Q_r(\varphi)$ be the only solution of 
 $$\left\{\begin{array}{lcl}
           \Delta\mathcal Q_r(\varphi_r)=0 & \mbox{ in } & \mathbb R^n\backslash B_r(0)\\
           \mathcal Q_r(\varphi_r)=\varphi_r & \mbox{ on } & \partial B_r(0)
          \end{array}
\right.$$
which tends to $0$ at $\infty$. Then
$$\|\mathcal Q_r(\varphi_r)\|_{C_{1-n}^{2,\alpha}(\mathbb R^n\backslash B_r(0))}\leq Cr^{n-1}\|\varphi_r\|_{(2,\alpha),r},$$
if $\varphi$ is $L^2-$orthogonal to the constant function. Moreover, if $\varphi=\displaystyle\sum_{j=1}^\infty\varphi_i,$
where $\varphi_i$ belonging to the eigenspace associated to the eigenvalue $i(i+n-2)$, then
$$\mathcal Q_r(\varphi_r)(x)=\sum_{j=1}^\infty r^{n+j-2}|x|^{2-n-j}\varphi_i.$$
\end{proping}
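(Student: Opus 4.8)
The plan is to solve the exterior Dirichlet problem for the Laplacian by separation of variables, using the eigenfunction decomposition on the sphere that was set up in the Notation subsection. First I would write the boundary datum in the form $\varphi = \sum_{i=1}^{\infty}\varphi_i$, where $\varphi_i$ lies in the eigenspace of $\Delta_{\mathbb{S}^{n-1}}$ associated with the eigenvalue $\lambda=i(i+n-2)$; the hypothesis that $\varphi$ is $L^2$-orthogonal to the constants means the sum starts at $i=1$. For a single spherical harmonic of degree $i$, the radial solutions of $\Delta(\rho(|x|)\varphi_i)=0$ are spanned by $|x|^{i}$ and $|x|^{2-n-i}$; the first grows at infinity and the second decays, so the unique harmonic extension of $r^{-i}\varphi_i|_{\partial B_r}$ (here the restriction of a degree-$i$ harmonic to $\mathbb{S}^{n-1}_r$ scales like $r^i$) that tends to $0$ at infinity is $r^{n+i-2}|x|^{2-n-i}\varphi_i$. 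Summing over $i$ gives the claimed formula
$$\mathcal{Q}_r(\varphi_r)(x)=\sum_{i=1}^{\infty}r^{n+i-2}|x|^{2-n-i}\varphi_i,$$
and one checks that this converges and is harmonic in $\mathbb{R}^n\setminus B_r(0)$ because each term decays and the coefficients are controlled by the $L^2$ (hence, after elliptic estimates, the $C^{2,\alpha}$) norm of $\varphi$. Uniqueness follows from the maximum principle: the difference of two solutions is harmonic on the exterior domain, vanishes on $\partial B_r(0)$ and at infinity, hence is identically zero.

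Next I would establish the weighted estimate $\|\mathcal{Q}_r(\varphi_r)\|_{C^{2,\alpha}_{1-n}(\mathbb{R}^n\setminus B_r(0))}\le C r^{n-1}\|\varphi_r\|_{(2,\alpha),r}$. Since every mode is damped by a factor at least $|x|^{2-n-1}=|x|^{1-n}$ relative to its boundary size, the leading behaviour at infinity is $|x|^{1-n}$, which matches the weight; this is where the $L^2$-orthogonality to constants is essential, since the constant mode would only decay like $|x|^{2-n}$ and, more importantly, the assertion is about the $i\ge 1$ modes whose slowest decay rate is exactly $1-n$. The clean way to do this is by scaling: set $\varphi_r(y)=\varphi(ry)$ on the unit sphere, reduce to the case $r=1$ by the dilation $x\mapsto rx$ (which sends $\mathcal{Q}_r$ to $\mathcal{Q}_1$ and rescales the weighted norms by explicit powers of $r$), and on the fixed exterior domain $\mathbb{R}^n\setminus B_1(0)$ prove the estimate by standard interior and boundary Schauder estimates for harmonic functions together with the explicit decay of the series. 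Tracking the powers of $r$ through the scaling — the boundary norm picks up no power once we use $\|\varphi_r\|_{(2,\alpha),r}=\|\varphi(r\cdot)\|_{C^{2,\alpha}(\mathbb{S}^{n-1})}$ by Definition \ref{def2}, while the weighted norm of $\mathcal{Q}_r$ on annuli $[\sigma,2\sigma]$ contributes $\sigma^{n-1}$ and the worst annulus $\sigma\sim r$ gives the factor $r^{n-1}$ — yields the stated inequality.

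The main obstacle, such as it is, is bookkeeping rather than any deep difficulty: one must be careful that the series defining $\mathcal{Q}_r$ converges in $C^{2,\alpha}_{1-n}$ and not merely pointwise, which requires summing the Schauder bounds on each mode with their geometric decay in $i$, and one must get the scaling exponents exactly right so that the single factor $r^{n-1}$ emerges (and no spurious factor of $r$ multiplies the boundary norm). Everything else is the classical exterior Dirichlet problem for the Laplacian, and the result is quoted verbatim from \cite{M}, so I would present the separation-of-variables computation and the scaling argument, and refer to \cite{M} and \cite{MR2337310} for the routine details of the convergence and Schauder estimates.
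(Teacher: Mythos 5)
The paper does not prove this proposition at all: it is recalled verbatim from Jleli's thesis \cite{M} (with pointers to \cite{MR2337310} and \cite{MR2639545}), so there is no in-paper argument to compare yours against. Your plan is the standard one and is essentially correct: decompose the boundary datum into spherical harmonics, note that the decaying radial solution for the degree-$i$ mode is $r^{n+i-2}|x|^{2-n-i}\varphi_i(x/|x|)$, get uniqueness from the maximum principle, and obtain the weighted bound by rescaling to $r=1$; your bookkeeping is right, since $\|u\|_{(2,\alpha),[\sigma,2\sigma]}$ is dilation-invariant in the sense that the weighted norm of $\mathcal Q_r$ equals $r^{n-1}$ times that of the rescaled solution on $\mathbb R^n\setminus B_1(0)$, while $\|\varphi_r\|_{(2,\alpha),r}$ picks up no power of $r$ by Definition \ref{def2}, and the $L^2$-orthogonality to constants is exactly what makes the slowest mode decay like $|x|^{1-n}$. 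The one soft spot is your suggestion to prove $C^{2,\alpha}$ convergence by ``summing the Schauder bounds on each mode'': mode-by-mode H\"older norms of the $\varphi_i$ are not controlled by $\|\varphi\|_{C^{2,\alpha}}$ alone (they grow polynomially in $i$ when bounded via $L^2$), so a literal term-by-term summation fails on the innermost annulus $[r,2r]$ where there is no geometric decay in $i$. The clean repair, which you already half-invoke, is to split: on $|x|\ge 2r$ use the series with $L^2$-type bounds on the modes, where the factor $(r/\sigma)^{i-1}\le 2^{1-i}$ absorbs the polynomial growth and yields the $|x|^{1-n}$ decay; on the annulus $r\le|x|\le 2r$ forget the series and use the maximum principle together with global (boundary) Schauder estimates for the rescaled exterior Dirichlet problem, which give $\|\tilde u\|_{(2,\alpha),[1,2]}\le C\|\varphi_r\|_{(2,\alpha),r}$ directly. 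With that adjustment your argument is complete and coincides with what the cited references do.
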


Consider $r>0$ much smaller than $r_1$. Let $\varphi\in C^{2,\alpha}(\mathbb S_r^{n-1})$ be a function $L^2-$orthogonal to the constant functions. Remember that $M_{s}=M\backslash \Psi(B_s(0))$. Let $u_\varphi\in C_\nu^{2,\alpha}(M_r)$ be a function such that $u_\varphi\equiv 0$ in $M_{r_1}$ and $u_\varphi\circ\Psi=r^{-\overline\gamma}|x|^{\overline\gamma}\mathcal Q_r(\pi_r''(\varphi))\eta+\eta\mathcal Q_r(\varphi-\pi_r''(\varphi))$, where $\eta$ is a smooth, radial function equal to 1 in $B_{3r}(0)$, vanishing in $\mathbb R^n\backslash B_{4r}(0)$, and satisfying $|\partial_r\eta(x)|\leq c|x|^{-1}$ and $|\partial_r^2\eta(x)|\leq c|x|^{-2}$ for all $x\in B_{r_1(0)}$. This implies that $\|\eta\|_{(2,\alpha),[\sigma,2\sigma]}\leq c$, for every $r\leq \sigma\leq r_1$. Hence, we get
\begin{equation}\label{eq031}
 \|u_\varphi\|_{C_\nu^{2,\alpha}(M_r)}\leq cr^{-\nu}\|\varphi\|_{(2,\alpha),r},
\end{equation}
for all $\nu\geq 1-n.$ Finally, define a function $G_p\in C^\infty(M\backslash\{p\})$ by $G_p\circ\Psi=\eta|x|^{2-\frac{n}{2}}$ in $B_{r_1}(0)$ and equal to zero in $M_{r_1}$.

Our goal in this section is to solve the equation
\begin{equation}\label{eq25}
H_{{g_0}}(1+\Lambda G_p+u_\varphi+v)=0\;\;\;\mbox{ on }\;\;\;M\backslash B_r(p),
\end{equation}
for some $r>0$, $\Lambda\in\mathbb R $ and $\varphi\in C^{2,\alpha}(\mathbb S^{n-1}_r)$, with $1+\Lambda G_p+u_\varphi+v>0$, where $H_{g_0}$ is defined in (\ref{eq014}). 

To solve this equations we linearize $H_{{g_0}}$ about 1 to get
\begin{equation}\label{eq029}
 H_{{g_0}}(1+\Lambda G_p+u_\varphi+v)=L_{{g_0}}^1(\Lambda G_p+u_\varphi+v)+Q(\Lambda G_p+u_\varphi+v),
\end{equation}
since $H_{{g_0}}(1)=0$, where $L_{g_0}^1$ is defined in (\ref{eq030}) and
\begin{equation}\label{eq018}
Q(u)=\int_0^1\int_0^1\frac{d}{ds}L_{g_0}^{1+tsu}(u)dsdt.
\end{equation}

Therefore, if $L_{g_0}^1$ has a right inverse $G_{r,g_0}$, then by (\ref{eq029}), to finding a solution of the equation (\ref{eq25}) it is enough to show that for suitable $\Lambda\in\mathbb{R}$, and $\varphi\in C^{2,\alpha}(\mathbb{S}^{n-1}_r)$ the map $\mathcal{M}_r(\Lambda,\varphi,\cdot):C^{2,\alpha}_\nu(M_r)\rightarrow C^{2,\alpha}_\nu(M_r)$, given by
\begin{equation}\label{eq60}
\mathcal{M}_r(\Lambda,\varphi,v) = -G_{r,{{g_0}}}\left(Q(\Lambda G_{p} +u_{\varphi}+v)+L_{g_0}^1(\Lambda G_{p}+u_{\varphi})\right),
\end{equation}
has a fixed point for small enough $r>0$. 
\subsection{Inverse for $L_{{g_0}}^1$ in $M_r$}\label{sec15}
To find a right inverse for $L_{{g_0}}^1$, we will follow the method of Jleli in \cite{M}.

\begin{lemma}[See \cite{M} and \cite{MR2194146}]\label{lem3}
Assume that $\nu\in\left(1-n,2-n\right)$ is fixed and that $0<2r<s\leq r_1$. Then there exists an operator
$$\tilde{G}_{r,s}:C^{0,\alpha}_{\nu-2}(\Omega_{r,s})\rightarrow C^{2,\alpha}_{\nu}(\Omega_{r,s})$$
such that, for all $f\in C^{0,\alpha}_{\nu}(\Omega_{r,s})$, the function $w=\tilde{G}_{r,s}(f)$ is a solution of
\begin{eqnarray*}%\label{eq29}
\left\{\begin{array}{rclccl}
\Delta w & = & f & \mbox{ in } & B_s(0)\backslash B_r(0)\\
w & = & 0 & \mbox{ on } & \partial B_s(0)\\
w & \in & \mathbb{R} & \mbox{ on } & \partial B_r(0)
\end{array}\right..
\end{eqnarray*}
In addition,
$$\|\tilde{G}_{r,s}(f)\|_{C_\nu^{2,\alpha}(\Omega_{r,s})}\leq C\|f\|_{C_{\nu-2}^{0,\alpha}(\Omega_{r,s})},$$
for some constant $C>0$ that does not depend on $s$ and $r$.
\end{lemma}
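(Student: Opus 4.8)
The plan is to construct the operator $\tilde G_{r,s}$ by a separation-of-variables / Fourier decomposition argument on the annulus, treating each spherical harmonic mode separately and then summing with weighted estimates. First I would fix $f \in C^{0,\alpha}_{\nu-2}(\Omega_{r,s})$ and, via the coordinate chart, regard $f$ as a function on the Euclidean annulus $A_{r,s} = B_s(0)\setminus B_r(0)$. Expanding in the eigenbasis $\{e_j\}$ of $\Delta_{\mathbb S^{n-1}}$ from the Notation subsection, write $f(\rho\theta) = \sum_{j\ge 0} f_j(\rho)e_j(\theta)$ and similarly seek $w(\rho\theta) = \sum_{j\ge 0} w_j(\rho)e_j(\theta)$. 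The PDE $\Delta w = f$ decouples into the family of ODEs $w_j'' + \tfrac{n-1}{\rho}w_j' - \tfrac{\lambda_j}{\rho^2}w_j = f_j$, where $\lambda_j = j(j+n-2)$. For $j\ge 1$ the homogeneous solutions are the powers $\rho^{\gamma_j^+}$ and $\rho^{\gamma_j^-}$ with $\gamma_j^\pm$ the two roots of $\gamma(\gamma+n-2) = \lambda_j$, namely $\gamma_j^+ = j$ and $\gamma_j^- = 2-n-j$; for $j=0$ they are $1$ and $\rho^{2-n}$. On each mode I would solve the ODE by the standard variation-of-parameters (Green's function) formula, imposing $w_j(s) = 0$ at the outer boundary, and for $j\ge 1$ also $w_j(r) = 0$, while for $j = 0$ leaving $w_0(r)$ free (this is the ``$w\in\mathbb R$ on $\partial B_r(0)$'' condition, since the only surviving boundary freedom is the constant mode). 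The two boundary conditions on each nontrivial mode pin down the two constants of integration uniquely, so $w = \tilde G_{r,s}(f)$ is well defined and linear in $f$.

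The key step is the weighted Schauder estimate $\|\tilde G_{r,s}(f)\|_{C^{2,\alpha}_\nu(\Omega_{r,s})} \le C\|f\|_{C^{0,\alpha}_{\nu-2}(\Omega_{r,s})}$ with $C$ independent of $r$ and $s$. Here the choice $\nu\in(1-n,2-n)$ is exactly what makes the indicial roots straddle the weight: for the critical mode $j=0$ one has $\gamma_0^- = 2-n < \nu < 1 = \gamma_0^+$, and for all $j\ge 1$ one has $\gamma_j^- = 2-n-j < \nu$ as well, so the Green's kernel on each mode is uniformly bounded in the $C^0_\nu$ sense after the rescaling $\rho\mapsto\sigma^{-1}\rho$ on each dyadic subannulus $[\sigma,2\sigma]\subset[r,s]$. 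Concretely, I would prove a uniform $C^0_\nu\to C^2_\nu$ bound by the rescaling trick used throughout the paper: on each scale $\sigma$, rescale to a fixed unit annulus, apply interior elliptic ($C^{2,\alpha}$) estimates together with a boundary Schauder estimate near $\partial B_s(0)$ and an a priori $C^0$ bound coming from the explicit mode-by-mode solution formula and the maximum principle, and track the powers of $\sigma$. The Hölder part of the norm comes for free from the interior and boundary Schauder estimates once the $C^0_\nu$ bound is in hand.

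The main obstacle I expect is making the constant $C$ genuinely uniform in both $r$ and $s$ simultaneously, i.e.\ as $r\to 0$ and as $s$ ranges over $(2r,r_1]$. The delicate points are: (i) the $j=0$ mode, where the outgoing solution behaves like $\rho^{2-n}$ near $r$ and the normalization ``$w_0$ constant on $\partial B_r$'' must be shown compatible with the decay $|w_0|\lesssim \rho^\nu$ on the whole annulus without a blow-up constant — this uses $\nu>1-n$ so that $\rho^\nu$ dominates $\rho^{2-n}\cdot(\text{const})$ only if the constant is itself controlled, which one gets from evaluating the Green's formula at $\rho = r$; and (ii) uniformity of the boundary Schauder estimate at $\partial B_s(0)$ as $s$ varies, handled again by rescaling $B_s$ to $B_1$. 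Once these uniform mode estimates are summed (the sum over $j$ converges because higher modes decay faster and the $C^{2,\alpha}$ norms of $e_j$ grow only polynomially, absorbed by the geometric decay of $\gamma_j^- - \nu$), the stated bound follows. For the full statement I would simply invoke Jleli \cite{M} and \cite{MR2194146}, where this construction is carried out in detail, and note that only cosmetic changes (the exponent range and the names of the spaces) are needed here.
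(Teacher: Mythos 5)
The paper itself gives no proof of this lemma: it is quoted verbatim from Jleli \cite{M} and Jleli--Pacard \cite{MR2194146}, so your overall plan (spherical-harmonic decomposition, variation of parameters on each mode, dyadic rescaled Schauder estimates, summation over modes) is indeed the route taken in the cited sources and is the right architecture. However, your treatment of the critical radial mode contains a genuine error, and it is exactly the point on which the lemma turns. Since $\nu\in(1-n,2-n)$ means $\nu<2-n$, the weight lies \emph{below both} indicial roots $\gamma_0^+=0$ and $\gamma_0^-=2-n$ of the $j=0$ mode (also note $\gamma_0^+=0$, not $1$); your claimed inequality $2-n<\nu<1$ is false, and the domination $\rho^{2-n}\lesssim\rho^{\nu}$ on the annulus comes from $\nu<2-n$, not from $\nu>1-n$. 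The role of $\nu>1-n$ is different: it guarantees that for every $j\ge 1$ the weight sits strictly between the roots $j$ and $2-n-j$, which is what makes the two-point Dirichlet problem on those modes uniformly solvable, so that you may indeed impose $w_j(r)=w_j(s)=0$ for $j\ge 1$ with a constant independent of $r$ and $s$.

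Because the weight is below both roots of the radial mode, that mode \emph{cannot} be given zero Dirichlet data at $\partial B_r(0)$ with uniform constants: correcting $w_{0}$ by $A+B\rho^{2-n}$ so that $w_0(r)=0$ forces $|B|\sim r^{\nu+n-2}\|f\|$, and $r^{\nu+n-2}\rho^{2-n}/\rho^{\nu}=(\rho/r)^{\,|\nu+n-2|}$ blows up as $r\to0$. This is precisely why the lemma only requires $w$ to be \emph{constant} on $\partial B_r(0)$. Your plan ``leave $w_0(r)$ free'' therefore points in the right direction but leaves the operator undefined (one boundary condition does not determine a second-order ODE solution), and the obstacle you describe in (i) misidentifies the mechanism. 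The fix is to \emph{choose} the radial-mode solution explicitly by integrating from the outer boundary, e.g.
\begin{equation*}
w_0(\rho)=\int_s^{\rho}t^{1-n}\int_s^{t}\tau^{\,n-1}f_0(\tau)\,d\tau\,dt,
\end{equation*}
which satisfies $w_0(s)=0$, is radial (hence constant on $\partial B_r(0)$), and obeys $|w_0(\rho)|\le C\rho^{\nu}\|f\|_{C^{0,\alpha}_{\nu-2}}$ with $C$ independent of $r$ and $s$ because $\nu-1<-1<n+\nu-3$ makes both iterated integrals pick up their size at the variable endpoint. With that choice made, and the $j\ge1$ modes handled by the two-point Green's function plus the rescaling argument you describe, the sketch becomes a correct rendition of the argument in \cite{M} and \cite{MR2194146}.
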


Note that, since $R_{g_0}\not=0$, in the previous lemma we can consider $-\frac{n-4}{8(n-2)}R_{g_0}\Delta$ instead of $\Delta$. By a perturbation argument we obtain the next lemma.

\begin{lemma}
Assume that $\nu$  and $\eta>0$ is fixed with $\nu$ and $\nu-\eta$ in $(1-n,2-n)$. Let $0<2r<s\leq r_1$ be constants. Then, for $r_1$ small enough, there exists an operator
$$G_{r,s}:C^{0,\alpha}_{\nu-2}(\Omega_{r,s})\rightarrow C^{2,\alpha}_\nu(\Omega_{r,s})$$
such that, for all $f\in C^{0,\alpha}_{\nu}(\Omega_{r,s})$, the function $w=G_{r,s}(f)$ is a solution of
\begin{eqnarray*}%\label{eq29}
\left\{\begin{array}{rclccl}
L_{g_0}^1 w & = & f & \mbox{ in } & B_s(0)\backslash B_r(0)\\
w & = & 0 & \mbox{ on } & \partial B_s(0)\\
w & \in & \mathbb{R} & \mbox{ on } & \partial B_r(0)
\end{array}\right..
\end{eqnarray*}
In addition,
\begin{equation}\label{eq053}
\|G_{r,s}(f)\|_{C_\nu^{2,\alpha}(\Omega_{r,s})}\leq Cr^{-\eta}\|f\|_{C_{\nu-2}^{0,\alpha}(\Omega_{r,s})}, 
\end{equation}
for some constant $C>0$ that does not depend on $s$ and $r$.
\end{lemma}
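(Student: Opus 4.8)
The plan is to obtain $G_{r,s}$ as a small perturbation of the operator $\tilde G_{r,s}$ from Lemma~\ref{lem3}, exploiting the fact that near $p$ the metric $g_0$ is close to the Euclidean one and that $R_{g_0}$ is bounded away from zero. Write the operator $L_{g_0}^1$ from \eqref{eq030} as
$$L_{g_0}^1 = -\frac{n-4}{8(n-2)}R_{g_0}\,\Delta + E,$$
where $\Delta$ is the \emph{flat} Laplacian in the coordinate chart $\Psi$ and
$$E\,u = -\frac{n-4}{8(n-2)}R_{g_0}(\Delta_{g_0}-\Delta)u - \frac{n(n-1)(n-4)}{8}u + \frac{n-4}{4(n-2)}\langle Ric_{g_0},\nabla_{g_0}^2 u\rangle_{g_0}.$$
Since $(g_0)_{ij}=\delta_{ij}+O(|x|^2)$ in the chart, the difference $\Delta_{g_0}-\Delta$ is a second-order operator with coefficients that are $O(|x|^2)$ for the leading part and $O(|x|)$ for the first-order part; the zeroth-order term is bounded, and the $Ric_{g_0}$ term has bounded coefficients. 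Working in the weighted spaces of Definition~\ref{def4}, multiplication by a function that is $O(|x|^\kappa)$ maps $C^{2,\alpha}_\nu(\Omega_{r,s})$ into $C^{0,\alpha}_{\nu-2+\kappa}(\Omega_{r,s})$ with norm $O(r^\kappa)$ (more precisely, $O(s^\kappa)$, hence $O(r_1^\kappa)$ since $s\le r_1$); here $\kappa=2$ for the principal part of $\Delta_{g_0}-\Delta$. The lower-order and bounded terms do not gain a power of $r$, but they cost two derivatives, so on $C^{2,\alpha}_\nu$ they are controlled by the reference norm with a loss captured by the factor $r^{-\eta}$ built into the statement.

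Concretely, first I would normalize by dividing: since $R_{g_0}\ge c_0>0$ on $M$ (which follows from the hypotheses via \eqref{eq010}, as noted in Section~\ref{sec10}), the operator $-\frac{n-4}{8(n-2)}R_{g_0}\Delta$ is, up to a smooth positive factor, the flat Laplacian, and Lemma~\ref{lem3} applies to it verbatim: there is $\tilde G_{r,s}$ with $\|\tilde G_{r,s}(f)\|_{C^{2,\alpha}_\nu}\le C\|f\|_{C^{0,\alpha}_{\nu-2}}$, uniformly in $r,s$. Then I would write $L_{g_0}^1 = P + E$ with $P=-\frac{n-4}{8(n-2)}R_{g_0}\Delta$ and show $\|E\circ\tilde G_{r,s}\|<\tfrac12$ on $C^{0,\alpha}_{\nu-2}(\Omega_{r,s})$ for $r_1$ small. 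For the terms of $E$ carrying a factor $O(|x|^2)$ this is immediate: $\|E\circ\tilde G_{r,s}\|\le C r_1^2$. For the genuinely lower-order and zeroth-order pieces of $E$ (which are merely bounded, not small), the composition $E\circ\tilde G_{r,s}$ is bounded on $C^{0,\alpha}_{\nu-2}(\Omega_{r,s})$ but not contracting; this is exactly the role of the extra weight $\nu-\eta$ and the factor $r^{-\eta}$ in \eqref{eq053}. The device is the standard one: replace $\tilde G_{r,s}$ by its composition with the inclusion $C^{0,\alpha}_{\nu-2-\eta}\hookrightarrow C^{0,\alpha}_{\nu-2}$, which on $\Omega_{r,s}$ has norm $O(r^{-\eta})$ (since $|x|\ge r$ there, and lowering the weight index by $\eta$ costs $r^{-\eta}$), and observe that $E$ maps $C^{2,\alpha}_\nu$ into $C^{0,\alpha}_{\nu-2}$, hence into $C^{0,\alpha}_{\nu-2-\eta}$ with norm $O(r^{\eta})$ gained back from the opposite inclusion on the target — so after matching weights $E\circ(\text{shifted }\tilde G_{r,s})$ does contract for $r_1$ small while the inverse picks up the announced $r^{-\eta}$. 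Both $\nu$ and $\nu-\eta$ are required to lie in $(1-n,2-n)$ precisely so that Lemma~\ref{lem3} is applicable at both weights.

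Once $\|L_{g_0}^1\circ(\text{shifted }\tilde G_{r,s})-I\|\le\tfrac12$ on the appropriate space, a Neumann series gives a right inverse of $L_{g_0}^1\circ(\text{shifted }\tilde G_{r,s})$ with norm $\le 2$, and I set
$$G_{r,s} := (\text{shifted }\tilde G_{r,s})\circ\bigl(L_{g_0}^1\circ(\text{shifted }\tilde G_{r,s})\bigr)^{-1}.$$
The boundary conditions — $w=0$ on $\partial B_s(0)$ and $w$ constant on $\partial B_r(0)$ — are preserved at every step because $\tilde G_{r,s}$ produces functions in that class and the Neumann-series inverse is built only from $L_{g_0}^1$ and $\tilde G_{r,s}$, which both respect these conditions (the former because it is a differential operator, the latter by Lemma~\ref{lem3}). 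The estimate \eqref{eq053} then follows by composing the two norm bounds, with the $r^{-\eta}$ coming entirely from the weight shift. The main obstacle is the bookkeeping in the second paragraph: getting the contraction to hold requires carefully tracking how each term of $E$ — distinguished by its order and its vanishing rate at $p$ — acts between the weighted Hölder spaces, and choosing the weight shift $\eta$ so that the non-small zeroth- and first-order parts of $E$ become contracting after the shift while the inverse still has the uniform-in-$s$, controlled-in-$r$ bound claimed; the perturbation terms with a genuine $O(|x|^2)$ factor are harmless, but the bounded terms are what force the $r^{-\eta}$ loss and must be handled with the weight trick rather than by smallness.
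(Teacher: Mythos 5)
Your overall strategy coincides with the paper's: absorb $R_{g_0}$ into the flat model so that Lemma \ref{lem3} furnishes $\tilde G_{r,s}$, treat the remaining terms of $L_{g_0}^1$ in (\ref{eq030}) as an error, use the shift of the weight by $\eta$ to make the non-decaying pieces (the zeroth-order term and $\langle Ric_{g_0},\nabla_{g_0}^2\cdot\rangle_{g_0}$) small, and conclude by a Neumann series, the factor $r^{-\eta}$ in (\ref{eq053}) coming from restoring the weight. The problem is in where you pay the $r^{-\eta}$: as you arrange the estimates, the contraction you invoke does not follow from the bounds you state.

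Concretely, two points. First, with the norms of Definition \ref{def4} on $\Omega_{r,s}$, lowering the target weight from $\nu-2$ to $\nu-2-\eta$ gains only a factor $O(s^{\eta})$ (the supremum runs over $\sigma$ up to $s/2$), not the $O(r^{\eta})$ you claim; the latter would hold only for functions concentrated near $|x|\sim r$. Second, and more seriously, you pre-compose $\tilde G_{r,s}$ with the weight-raising inclusion $\iota:C^{0,\alpha}_{\nu-2-\eta}(\Omega_{r,s})\rightarrow C^{0,\alpha}_{\nu-2}(\Omega_{r,s})$, whose norm is $O(r^{-\eta})$, before forming the error loop; the composition $E\circ(\tilde G_{r,s}\circ\iota)$ then has norm $O(s^{\eta}r^{-\eta})=O((s/r)^{\eta})$, which is never less than $1$ and in fact blows up as $r\rightarrow 0$ with $s=r_1$ fixed — precisely the regime needed for Theorem \ref{teo03}. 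So the assertion that $\|E\circ(\tilde G_{r,s}\circ\iota)\|<\tfrac12$ for $r_1$ small is not justified. The paper keeps the two losses separate: it uses the unshifted $\tilde G_{r,s}:C^{0,\alpha}_{\nu-2}\rightarrow C^{2,\alpha}_{\nu}$, uniformly bounded, proves $\|L_{g_0}^1(\tilde G_{r,s}(v))-v\|_{C^{0,\alpha}_{\nu-2-\eta}(\Omega_{r,s})}\leq Cs^{\eta}\|v\|_{C^{0,\alpha}_{\nu-2}(\Omega_{r,s})}$, inverts $L_{g_0}^1\circ\tilde G_{r,s}$ on the strength of this $s^{\eta}$ alone (a step the paper states tersely, but in which $r^{-\eta}$ never enters), and only afterwards applies $\tilde G_{r,s}$ regarded as a map $C^{0,\alpha}_{\nu-2-\eta}\rightarrow C^{2,\alpha}_{\nu}$ with norm $Cr^{-\eta}$, so that this factor appears exactly once, in the final estimate (\ref{eq053}). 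To repair your argument you must restructure it in this order, so that the $r^{-\eta}$ you need for the final bound never multiplies the quantity whose smallness drives the Neumann series.
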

\begin{proof}
Note that  by (\ref{eq030}) we get
$$\begin{array}{rcl}
L_{g_0}^1(\tilde{G}_{r,s}(v))-v & = & -\displaystyle\frac{n-4}{8(n-2)}R_{g} (\Delta_{g_0}-\Delta)\tilde{G}_{r,s}(v)- \frac{n(n-1)(n-4)}{8}\tilde{G}_{r,s}(v)\\
& + & \displaystyle\frac{n-4}{4(n-2)}\langle Ric_{g_0},\nabla_{g_0}^2\tilde{G}_{r,s}(v)\rangle_{g_0}.
\end{array}$$
Thus, from
$$ \sigma^{2-\nu+\eta} \|L_{g_0}^1(\tilde{G}_{r,s}(v))-v \|_{(0,\alpha),[\sigma,2\sigma]}\leq C\sigma^{-\nu+\eta} \|\tilde{G}_{r,s}(v)\|_{(2,\alpha),[\sigma,2\sigma]}$$
we get
$$ \|L_{g_0}^1(\tilde{G}_{r,s}(v))- v\|_{C^{0,\alpha}_{\nu-2-\eta} (\Omega_{r,s})}\leq Cs^{\eta} \|\tilde{G}_{r,s}(v)\|_{C^{2,\alpha}_{\nu} (\Omega_{r,s})}\leq Cs^\eta \|v\|_{C^{0,\alpha}_{\nu-2} (\Omega_{r,s})}.$$

Therefore, for $s>0$ small enough there is an inverse $(L_{g_0}^1\circ\tilde{G}_{r,s})^{-1}: C^{0,\alpha}_{\nu-2} (\Omega_{r,s})\rightarrow C^{2,\alpha}_{\nu-2-\eta} (\Omega_{r,s})$ with bounded norm. 
Besides, the operator $\tilde{G}_{r,s}:C^{0,\alpha}_{\nu-2-\eta} (\Omega_{r,s})\rightarrow C^{2,\alpha}_{\nu} (\Omega_{r,s})$ satisfies the condition
$$\|\tilde{G}_{r,s}(f)\|_{C^{2,\alpha}_\nu(\Omega_{r,s})}\leq Cr^{-\eta}\|f\|_{C^{0,\alpha}_{\nu-2-\eta}(\Omega_{r,s})}.$$

Therefore, we have the right inverse $G_{r,s}:=\tilde{G}_{r,s}\circ (L_{g_0}^1\circ\tilde{G}_{r,s})^{-1}: C^{0,\alpha}_{\nu-2} (\Omega_{r,s})\rightarrow C^{2,\alpha}_{\nu} (\Omega_{r,s})$, with the norm estimate (\ref{eq053}).
\end{proof}
\begin{theorem}\label{teo03}
Assume that $\nu$  and $\eta>0$ is fixed with $\nu$ and $\nu-\eta$ in $(1-n,2-n)$. There exists $r_2< \frac{1}{4}r_1$, such that, for all $r\in(0,r_2)$ we can define an operator
$G_{r,{{g_0}}}:C^{0,\alpha}_{\nu-2}(M_r)\rightarrow C^{2,\alpha}_{\nu}(M_r),$
with the property that, for all $f\in C^{0,\alpha}_{\nu-2}(M_r)$ the function $w=G_{r,{{g_0}}}(f)$ solves
$$L_{g_0}^1(w)  =  f,$$
in $M_r$ with $w\in\mathbb{R}$ constant on $\partial B_r(p)$. In addition
$$\|G_{r,{{g_0}}}(f)\|_{C^{2,\alpha}_\nu(M_r)}\leq Cr^{-\eta}\|f\|_{C^{0,\alpha}_{\nu-2}(M_r)},$$
where $C>0$ does not depend on $r.$
\end{theorem}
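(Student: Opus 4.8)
The plan, following the gluing scheme of \cite{M}, is to build $G_{r,g_0}$ by patching the annular right inverse $G_{r,r_1}$ of the preceding lemma to a right inverse for $L^1_{g_0}$ on the closed manifold $M$. The latter exists precisely because $g_0$ is nondegenerate: by Definition \ref{def5} the elliptic operator $L^1_{g_0}\colon C^{2,\alpha}(M)\to C^{0,\alpha}(M)$ is surjective, hence (having finite-dimensional kernel and closed range) admits a bounded right inverse $G_M$. First I would fix radial cutoffs $\chi,\zeta$ supported in the coordinate chart with $\chi\equiv 1$ near $p$, $\zeta\equiv 1$ on $\mathrm{supp}\,\chi$, the supports of $\chi$ and $\nabla\zeta$ disjoint, and $\mathrm{supp}\,\zeta\subset\{|x|\le 3r_1/4\}$, with the usual derivative bounds. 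Given $f\in C^{0,\alpha}_{\nu-2}(M_r)$ I would set $w_1:=G_{r,r_1}(\chi f)$ (the preceding lemma with outer radius $r_1$; legitimate since $2r<r_1$ for $r<r_2<r_1/4$) and $w_2:=G_M((1-\chi)f)$, where $(1-\chi)f$ vanishes near $p$ and is extended by zero to $M$. To repair the boundary datum I would also let $\psi_r$ be the non-constant part of $w_2|_{\partial B_r(p)}$ and $w_3:=\eta\,\mathcal Q_r(\psi_r)$ (read through the chart) be the cut-off extension of $\psi_r$ supplied by Proposition \ref{propo002}, supported near $p$. The approximate inverse is $\bar G(f):=\zeta w_1+w_2-w_3$.

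Because $\zeta\equiv 1$ on $\mathrm{supp}\,\chi$, one finds $L^1_{g_0}(\bar G(f))=f+E(f)-L^1_{g_0}w_3$ with $E(f):=[L^1_{g_0},\zeta]w_1$, and I would then verify three facts. First, $E(f)$ is supported in the fixed compact annulus $\mathrm{supp}\,\nabla\zeta$; since the construction $G_{r,r_1}=\tilde G_{r,r_1}\circ(L^1_{g_0}\tilde G_{r,r_1})^{-1}$ of the preceding lemma in fact maps $C^{0,\alpha}_{\nu-2}$ boundedly into $C^{2,\alpha}_{\nu-\eta}$ with $r$-independent norm (Lemma \ref{lem3} — the factor $r^{-\eta}$ appears only on re-measuring in $C^{2,\alpha}_\nu$), on this fixed annulus $\|w_1\|_{C^{2,\alpha}}\le C\|f\|$ with $C$ independent of $r$, so $E$ is a bounded operator on $C^{0,\alpha}_{\nu-2}(M_r)$ with $r$-independent norm. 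Second, $E(f)$ is supported where $\chi\equiv 0$, so $\chi E(f)\equiv 0$, hence $G_{r,r_1}(\chi E(f))=0$ and therefore $E\circ E=0$. Third, since $w_2$ solves $L^1_{g_0}w_2=0$ near $p$, a Taylor expansion gives $\|\psi_r\|_{(2,\alpha),r}\le Cr\|f\|$, and together with the decay estimate of Proposition \ref{propo002} this makes $f\mapsto L^1_{g_0}w_3=:E''(f)$ a bounded operator on $C^{0,\alpha}_{\nu-2}(M_r)$ of norm $O(r^\theta)$ with $\theta>0$; moreover $\bar G(f)$ is now constant on $\partial B_r(p)$, since $\zeta w_1$ is constant there by the preceding lemma and $w_3$ cancels the non-constant part of $w_2|_{\partial B_r(p)}$.

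Assembling these, $L^1_{g_0}\circ\bar G=I+E+E''$; because $E^2=0$ we have $(I+E)^{-1}=I-E$, of norm $\le 1+\|E\|$, and for $r<r_2$ small enough $\|(I-E)E''\|<1/2$, so $I+E+E''$ is invertible by a Neumann series with uniformly bounded inverse. Then $G_{r,g_0}:=\bar G\circ(I+E+E'')^{-1}$ is the sought right inverse; it is constant on $\partial B_r(p)$ because $\bar G(h)$ is for every $h$, and the estimate follows because in the weighted norm of Definition \ref{def4} only the $w_1$ term carries the factor $r^{-\eta}$ while $w_2,w_3$ contribute $O(\|f\|)$, so $\|\bar G(f)\|_{C^{2,\alpha}_\nu(M_r)}\le Cr^{-\eta}\|f\|_{C^{0,\alpha}_{\nu-2}(M_r)}$ and composition with a bounded operator keeps this. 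The step I expect to be the real obstacle is the second paragraph: choosing the cutoffs and the correction $w_3$ so that, all at once, the gluing error is uniformly bounded in $r$, nilpotent, and compatible with the prescribed constant boundary value on $\partial B_r(p)$; the closing perturbation argument is routine.
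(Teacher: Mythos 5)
Your construction is correct, but it is organized differently from the paper's. The paper corrects residuals sequentially: it sets $w_0=\chi_1 G_{r,r_1}(f|_{\Omega_{r,r_1}})$ with a cutoff at the \emph{fixed} scale $r_1$, observes that the residual $h=f-L^1_{g_0}(w_0)$ is supported in $M_{\frac12 r_1}$, corrects it by $w_1=\chi_2(L^1_{g_0})^{-1}(h)$ with a second cutoff at the \emph{small} scale $r_2=2r$ near $p$ (so $w_1$ vanishes identically near $\partial B_r(p)$ and the constant boundary value is untouched), and then checks that the total error $L^1_{g_0}(w_0+w_1)-f$, supported in $\Omega_{2r,4r}$, has weighted norm $O(r^{-1-\nu-\eta})=o(1)$, so one Neumann series finishes. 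You instead split the \emph{data}, $f=\chi f+(1-\chi)f$, at a fixed scale, solve the two pieces by $G_{r,r_1}$ and by a right inverse on the closed manifold, and glue the \emph{solutions} with a fixed-scale cutoff $\zeta$; your gluing error $E=[L^1_{g_0},\zeta]w_1$ is not small, and you compensate with the nilpotency trick $E^2=0$ coming from the support separation of $\chi$ and $\nabla\zeta$, while the boundary condition on $\partial B_r(p)$ — which the paper gets for free from the cutoff at scale $2r$ — forces you to add the harmonic correction $w_3=\eta\,\mathcal Q_r(\psi_r)$ of Proposition \ref{propo002}, whose error term is genuinely $O(r^{\theta})$. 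Both routes work; the paper's needs only one smallness estimate and no boundary repair, while yours avoids any $r$-scale cutoff of the global piece at the price of the extra correction $w_3$ and of a refinement of the preceding lemma: to make $\|E\|$ bounded independently of $r$ you must use, as you note, that $G_{r,r_1}$ maps $C^{0,\alpha}_{\nu-2}$ into $C^{2,\alpha}_{\nu-\eta}$ with $r$-independent bound, a fact contained in the lemma's construction but not in its statement (otherwise you would only get $\|E\|\le Cr^{-\eta}$, which still suffices after renaming $\eta$, at the cost of a worse final constant). Two cosmetic slips: with your definition $E''(f)=L^1_{g_0}w_3$ the identity is $L^1_{g_0}\circ\bar G=I+E-E''$, not $I+E+E''$ (harmless for the perturbation argument), and you use the symbol $\eta$ both for the exponent in the statement and for the cutoff in $w_3$.
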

\begin{proof} The proof is analogous to the proof of Proposition 13.28 in \cite{M}. 

We can take $s=r_1$ with $r_1>0$ small enough. Let $f\in C^{0,\alpha}_{\nu-2}(M_r)$ and define a function $w_0\in C^{2,\alpha}_\nu(M_r)$ by $w_0:=\chi_1 G_{r,r_1}(f|_{\Omega_{r,r_1}})$ where $\chi_1$ is a smooth, radial function equal to 1 in $B_{\frac{1}{2}r_1}(p)$, vanishing in $M_{r_1}$ and satisfying $|\partial_r\chi_1(x)|\leq c|x|^{-1}$ and $|\partial_r^2\chi_1(x)|\leq c|x|^{-2}$ for all $x\in B_{r_1}(0)$. From this it follows that $\|\chi_1\|_{(2,\alpha),[\sigma,2\sigma]}$ is uniformly bounded in $\sigma$, for every $r\leq \sigma\leq \frac{1}{2}r_1$. Thus,
$$
\sigma^{-\nu}\|w_0\|_{(2,\alpha),[\sigma,2\sigma]} \leq C\| G_{r,r_1}(f|_{\Omega_{r,r_1}}) \|_{C^{2,\alpha}_\nu (\Omega_{r,r_1})}\leq Cr^{-\eta} \| f\|_{{C^{0,\alpha}_{\nu-2}(M_r)}}.$$
Since $w_0$ vanishes in $M_{r_1}$, then we get
\begin{equation}\label{eq08}
\|w_0\|_{C^{2,\alpha}_\nu(M_r)}\leq Cr^{-\eta} \| f\|_{{C^{0,\alpha}_{\nu-2}(M_r)}},
\end{equation}
where the constant $C>0$ is independent of $r$ and $r_1$.

Since $w_0=G_{r,r_1}(f|_{\Omega_{r,r_1}})$ in $\Omega_{r,\frac{1}{2}r_1}$, the function
\begin{equation}\label{eq060}
 h:=f-L_{{g_0}}^1(w_0)
\end{equation}
is supported in $M_{\frac{1}{2}r_1}$. We can consider that $h$ is defined on the whole $M$ with $h\equiv 0$ in $B_{\frac{1}{2}r_1}(p)$, and using that $L_{g_0}^1$ is bounded, we get that
$$\begin{array}{rcl}
\|h\|_{C^{0,\alpha}(M)} & = & \|h\|_{C^{0,\alpha}(M_{\frac{1}{2}r_1})}\leq C_{r_1}\|h\|_{C_{\nu-2}^{0,\alpha}(M_r)}\\
& \leq & C_{r_1} (\|f\|_{C^{0,\alpha}_{\nu-2}(M_r)}+ \|w_0\|_{C_{\nu}^{2,\alpha}(M_r)}).
\end{array}$$
From (\ref{eq08})
\begin{equation}\label{eq23}
\|h\|_{C^{0,\alpha}(M)}\leq C_{r_1}r^{-\eta} \|f\|_{C^{0,\alpha}_{\nu-2}(M_r)},
\end{equation}
with the constant $C_{r_1}>0$ independent of $r$.

Since $L_{{g_0}}^1:C^{2,\alpha}(M)\rightarrow C^{0,\alpha}(M)$ has a bounded inverse, we can define the function $w_1:=\chi_2 (L_{{g_0}}^1)^{-1}(h),$
where $\chi_2$ is a smooth, radial function equal to 1 in $M_{2r_2}$, vanishing in $B_{r_2}(p)$ and satisfying $|\partial_r\chi_2(x)|\leq c|x|^{-1}$ and $|\partial_r^2\chi_2(x)|\leq c|x|^{-2}$ for all $x\in B_{2r_2}(0)$ and some $r_2\in(r,\frac{1}{4}r_1)$ to be chosen later. This implies that $\|\chi_2\|_{(2,\alpha),[\sigma,2\sigma]}$ is uni\nolinebreak formly bounded in $\sigma$, for every $r\leq \sigma\leq \frac{1}{2}r_1$.

Hence, from (\ref{eq23}) and the fact that $(L_{g_0}^1)^{-1}$ is bounded, we have that
\begin{equation}\label{eq19}
\|w_1\|_{C^{2,\alpha}_\nu(M_r)}\leq C_{r_1}r^{-\eta}  \|f\|_{C^{0,\alpha}_{\nu-2}(M_r)},
\end{equation}
since $\nu<0$, where the constant $C_{r_1}>0$ is independent of $r$ and $r_2$.

Now, define an application $F_{r,{{g_0}}}:C^{0,\alpha}_{\nu-2}(M_r)\rightarrow C^{2,\alpha}_{\nu}(M_r)$ as $F_{r,{{g_0}}}(f):=w_0+w_1.$ From (\ref{eq08}) and (\ref{eq19}) we obtain
\begin{equation}\label{eq31}
\|F_{r,g_0}(f)\|_{C^{2,\alpha}_{\nu}(M_r)}\leq C_{r_1}r^{-\eta} \|f\|_{C^{0,\alpha}_{\nu-2}(M_r)},
\end{equation}
where the constant $C_{r_1}>0$ does not depend on $r$ and $r_2$.

Therefore we get the following
\begin{enumerate}
\item[i)] In $\Omega_{r,r_2}$ we have $w_0=G_{r,r_1}(f|_{\Omega_{r,r_1}})$ and $w_1=0$. Therefore $L_{g_0}^1(F_{r,g_0}(f))=f.$
\item[ii)] In $\Omega_{r_2,2r_2}$ we have $w_0=G_{r,r_1}(f|_{\Omega_{r,r_1}})$ and $w_1=\chi_2L_{g_0}^{-1}(h)$. Hence we have that $L_{g_0}^1(F_{r,g_0}(f))=f+L_{g_0}^1(\chi_2(L_{g_0}^1)^{-1}(h)).$
\item[iii)] In $M_{2r_2}$ we have $w_1=(L_{g_0}^1)^{-1}(h)$ and by (\ref{eq060}) we get
$$L_{g_0}^1(F_{r,g_0}(f))=L_{g_0}^1(w_0)+h=f.$$
\end{enumerate}

Hence, using the boundedness of $L_{g_0}^1$ and (\ref{eq23}) we get
$$\begin{array}{rcl}
\|L_{g_0}^1(F_{r,g_0}(f))-f\|_{(0,\alpha),[\sigma,2\sigma]} & \leq & \|L_{g_0}^1(\chi_2(L_{g_0}^1)^{-1}(h))\|_{(0,\alpha),[\sigma,2\sigma]}\\
& \leq & C_{r_1}r_2^{-3}r^{-\eta} \|f\|_{C^{0,\alpha}_{\nu-2}(M_r)},
\end{array}$$
where the constant $C_{r_1}>0$ does not depend on $r$.

Therefore
\begin{equation*}\label{eq30}
\|L_{g_0}^1(F_{r,g_0}(f))-f\|_{C^{0,\alpha}_{\nu-2}(M_r)}\leq C_{r_1}r^{-\eta} r_2^{-1-
\nu}\|f\|_{C^{0,\alpha}_{\nu-2}(M_r)}
\end{equation*}
since $1-n<\nu<2-n$ implies that $2-\nu>0$ and $-1-\nu>0$, for some constant $C_{r_1}>0$ independent of $r$ and $r_2$. If we consider $r_2=2r$, then
\begin{equation}\label{eq008}
\|L_{g_0}^1(F_{r,g_0}(f))-f\|_{C^{0,\alpha}_{\nu-2}(M_r)}\leq C_{r_1}r^{-1-
\nu-\eta}\|f\|_{C^{0,\alpha}_{\nu-2}(M_r)}.
\end{equation}
The assertion follows from a perturbation argument by (\ref{eq31}) and (\ref{eq008}).%, as in the Corollary \ref{cor001}.
\end{proof}

\subsection{Constant $\sigma_2-$curvature metrics on $M\backslash B_r(p)$}\label{sec13}

We will show that the map $\mathcal{M}_r(\Lambda,\varphi,\cdot)$, given by (\ref{eq60}) where $G_{r,g_0}$ is given by Theorem \ref{teo03} with $\eta>0$ small enough, is a contraction, and as a consequence the fixed point will depend continuously on the parameters $r$, $\Lambda$ and $\varphi$.
\begin{proping}\label{propo07}
Let $\nu\in(1-n,2-n)$ and $\eta>0$ small enough. Let $\beta$, $\gamma$, $\delta_4$, $\delta_5$ and $l$  be fixed positive constants such that $l>\max\{\delta_5,2\delta_4\}$. There exists $r_2\in (0,r_1/4)$ such that if $r\in(0,r_2)$, $\Lambda\in\mathbb{R}$ with $|\Lambda|^2\leq r^{n+l+\delta_5}$ and $\varphi\in C^{2,\alpha}(\mathbb{S}_r^{n-1})$ which is $L^2-$orthogonal to the constant functions with $\|\varphi\|_{(2,\alpha),r}\leq \beta r^{2+l-\delta_4}$, then there is a fixed point of the map $\mathcal{M}_r(\Lambda,\varphi, \cdot)$ in the ball of radius $\gamma r^{2+l-\nu}$ in $C_{\nu}^{2,\alpha}(M_r)$.
\end{proping}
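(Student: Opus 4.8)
The plan is to solve the fixed-point equation for $\mathcal{M}_r(\Lambda,\varphi,\cdot)$ defined in \eqref{eq60} by the standard Banach fixed-point scheme, exactly in parallel with the interior analysis of Proposition \ref{propo03}. First I would record that, since $H_{g_0}(1)=0$, the function $w$ solves \eqref{eq25} if and only if it is a fixed point of $\mathcal{M}_r(\Lambda,\varphi,\cdot)$, using the right inverse $G_{r,g_0}$ of $L_{g_0}^1$ furnished by Theorem \ref{teo03} (with $\eta>0$ chosen small). So it suffices to show that on the ball $\mathcal{B}:=\{v\in C_\nu^{2,\alpha}(M_r):\|v\|_{C_\nu^{2,\alpha}(M_r)}\leq \gamma r^{2+l-\nu}\}$ the map is well-defined, maps $\mathcal{B}$ into itself, and is a contraction there.

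The key estimates are: (i) a bound on $\mathcal{M}_r(\Lambda,\varphi,0)=-G_{r,g_0}(Q(\Lambda G_p+u_\varphi)+L_{g_0}^1(\Lambda G_p+u_\varphi))$, and (ii) a Lipschitz bound on $\mathcal{M}_r(\Lambda,\varphi,v_1)-\mathcal{M}_r(\Lambda,\varphi,v_2)$. For (ii) one writes, using \eqref{eq018},
$$\mathcal{M}_r(\Lambda,\varphi,v_1)-\mathcal{M}_r(\Lambda,\varphi,v_2)=-G_{r,g_0}\!\left(\int_0^1\tfrac{d}{dt}Q\big(\Lambda G_p+u_\varphi+v_2+t(v_1-v_2)\big)\,dt\right),$$
and since $Q$ collects the quadratic-and-higher terms of $H_{g_0}$ about $1$, its differential at a small argument is of size $O(\text{small})$; combined with the bound $\|G_{r,g_0}\|\leq Cr^{-\eta}$ from Theorem \ref{teo03} and the fact that $G_p=O(|x|^{2-n/2})$, $\Lambda$ is controlled by $|\Lambda|^2\le r^{n+l+\delta_5}$, $\|u_\varphi\|_{C_\nu^{2,\alpha}(M_r)}\le cr^{-\nu}\|\varphi\|_{(2,\alpha),r}$ from \eqref{eq031}, and $v_i\in\mathcal{B}$, one gets a contraction constant $\le 1/2$ once $r$ is small; the hypothesis $l>\max\{\delta_5,2\delta_4\}$ is exactly what makes the "source" terms $L_{g_0}^1(\Lambda G_p)$, $L_{g_0}^1(u_\varphi)$ and $Q(\Lambda G_p+u_\varphi)$ all of order $r^{c}\cdot r^{2+l-\nu}$ with $c>0$, so that for (i) one obtains $\|\mathcal{M}_r(\Lambda,\varphi,0)\|_{C_\nu^{2,\alpha}(M_r)}\le\frac12\gamma r^{2+l-\nu}$. (Here one uses that $L_{g_0}^1 G_p$ is supported near $p$ and decays, that $L_{g_0}^1$ annihilates constants, and that away from the cutoff region $u_\varphi$ and $G_p$ are harmonic up to the perturbation $L_{g_0}^1-\big(-\tfrac{n-4}{8(n-2)}R_{g_0}\Delta\big)$, which is itself $O(|x|^2)$ in these coordinates since $(g_0)_{ij}=\delta_{ij}+O(|x|^2)$.) Then $\mathcal{M}_r(\Lambda,\varphi,\cdot)$ maps $\mathcal{B}$ to itself, and the contraction mapping theorem yields the fixed point.

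Finally I would check the regularity and positivity claims: the fixed point lies in $C_\nu^{2,\alpha}(M_r)$ by construction, is constant on $\partial B_r(p)$ because $G_{r,g_0}$ produces functions constant there (Theorem \ref{teo03}), and since $\nu<0$ the perturbation $\Lambda G_p+u_\varphi+v\to 0$ in $C^0$ away from $p$ as $r\to0$, so $1+\Lambda G_p+u_\varphi+v>0$ for $r$ small; the $2$-admissibility of a metric conformal to $g_0$ then allows one to invoke elliptic regularity so that the solution is smooth and genuinely solves $\sigma_2(A_{\hat g})=n(n-1)/8$. The main obstacle is the bookkeeping in step (i)–(ii): tracking the precise powers of $r$ contributed by $\Lambda G_p$ (via $|\Lambda|^2\le r^{n+l+\delta_5}$), by $u_\varphi$ (via $\|\varphi\|_{(2,\alpha),r}\le\beta r^{2+l-\delta_4}$ and \eqref{eq031}), and by the weighted norms, and verifying that the loss $r^{-\eta}$ from the inverse is absorbed — all of which forces the constraint $l>\max\{\delta_5,2\delta_4\}$ and the smallness of $\eta$; these are routine but delicate, and are handled just as in \cite{MR2639545} and in the proof of Proposition \ref{propo03}.
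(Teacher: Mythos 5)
Your proposal follows essentially the same route as the paper's proof: there, too, one shows $\|\mathcal{M}_r(\Lambda,\varphi,0)\|_{C^{2,\alpha}_\nu(M_r)}\le\tfrac12\gamma r^{2+l-\nu}$ and that $\mathcal{M}_r(\Lambda,\varphi,\cdot)$ is a $\tfrac12$-contraction on the ball, using the inverse $G_{r,g_0}$ from Theorem \ref{teo03} with its $r^{-\eta}$ loss and the same power counting in $|\Lambda|$, $\|\varphi\|_{(2,\alpha),r}$ and the condition $l>\max\{\delta_5,2\delta_4\}$ (which the paper uses precisely for the quadratic term $Q$). Only your parenthetical justifications are off and, fortunately, not load-bearing: $L^1_{g_0}$ does not annihilate constants (the relevant fact is $H_{g_0}(1)=0$), and neither $G_p=\eta|x|^{2-n/2}$ nor the high-frequency part of $u_\varphi$ is harmonic — the paper instead estimates $L^1_{g_0}(\Lambda G_p+u_\varphi)$ and $Q(\Lambda G_p+u_\varphi)$ directly in the weighted norms from $G_p=O(|x|^{2-n/2})$, $u_\varphi=O(|x|^{1-n})$ and (\ref{eq031}), and the positivity/regularity remarks belong to later sections rather than to this proposition.
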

\begin{proof}
From (\ref{eq60}) and Theorem \ref{teo03} it follows that
$$\|\mathcal{M}_r(\Lambda,\varphi,0) \|_{C^{2,\alpha}_\nu(M_r)}\leq Cr^{-\eta}\left(\|Q^1_{g_0}(\Lambda G_{p}+u_{\varphi}) \|_{C_{\nu-2}^{0,\alpha}(\Omega_{r,r_1})}\right.$$
$$\left.+\|L_g(\Lambda G_{p}+u_{\varphi} )\|_{C_{\nu-2}^{0,\alpha}(\Omega_{r,r_1})}\right),$$
for some constant $C>0$ independent of $r$, since the functions $G_p$, $u_\varphi$ and $h$ are equal to zero in $M\backslash B_{4r}(p)$.

By (\ref{eq018}), we get
$$\begin{array}{lcl}
   \|Q_{g}(\Lambda G_p+u_\varphi)\|_{(0,\alpha),[\sigma,2\sigma]} & \leq &  C\sigma^{-4}\|\Lambda G_p+u_\varphi\|^2_{(2,\alpha),[\sigma,2\sigma]}
  \end{array}
$$
and then, since $G_p=O(|x|^{2-\frac{n}{2}})$, $u_\varphi=O(|x|^{1-n})$ and (\ref{eq031}) holds, we have that
$$\begin{array}{rcl}
\sigma^{2-\nu}\|Q_{g}(\Lambda G_p+u_\varphi)\|_{(0,\alpha),[\sigma,2\sigma]} & \leq & C\sigma^{-2-\nu} \|\Lambda G_p+u_\varphi\|^2_{(2,\alpha),[\sigma,2\sigma]}\\
& \leq & C(r^{l-\delta_5}+ r^{l-2\delta_4})r^{2+l-\nu}
\end{array}$$

So,
$$\|Q^1_{g_0}(\Lambda G_p+u_\varphi)\|_{C^{0,\alpha}_{\nu-2}(\Omega_{r,s})} \leq r^{\delta_6} r^{2+l-\nu},$$
with $\delta_6>0$. Now, by (\ref{eq030}) we obtain that
$$ \|L_g^1(\Lambda G_p+u_\varphi)\|_{(0,\alpha),[\sigma,2\sigma]} \leq C(\sigma^{d-3}+1)\|\Lambda G_p+ u_\varphi\|_{(2,\alpha),[\sigma,2\sigma]},$$
and this implies that
$$\sigma^{2-\nu} \|L_g(\Lambda G_p+u_\varphi)\|_{(0,\alpha),[\sigma,2\sigma]} \leq C(\sigma^{d-1-\nu}+\sigma^{2-\nu})\|\Lambda G_p+ u_\varphi\|_{(2,\alpha),[\sigma,2\sigma]}$$
$$\begin{array}{rl}
& \leq  C\left(r^{d-1-\frac{l}{2}+\frac{\delta_5}{2}}+r^{2-\frac{l}{2}+\frac{\delta_5}{2}}+ r^{d-1-\delta_4}+r^{2-\delta_4}\right)r^{2+l-\nu}
\end{array}$$

Therefore
\begin{equation}\label{eq054}
 \|\mathcal{M}_r(\Lambda,\varphi,0) \|_{C^{2,\alpha}_\nu(M_r)}\leq \frac{1}{2}\gamma r^{2+l-\nu}.
\end{equation}

Now, we have
$$\|\mathcal{M}(v_1)- \mathcal{M}(v_2)\|_{C^{2,\alpha}_\nu(M_r)} $$
$$\begin{array}{l}
\leq Cr^{-\eta}\|  Q^1_{g_0}(\Lambda G_p+u_\varphi+v_1)- Q^1_{g_0}(\Lambda G_p+ u_\varphi+v_2)\|_{C^{0,\alpha}_{\nu-2}(M_r)}\\
  = Cr^{-\eta} \left(\|  Q^1_{g_0}(v_1)- Q^1_{g_0}(v_2)\|_{C^{0,\alpha}(M_{r_1})}\right.\\
  +\left.\|  Q^1_{g_0}(\Lambda G_p+u_\varphi+v_1)- Q^1_{g_0}(\Lambda G_p+u_\varphi+v_2)\|_{C^{0,\alpha}_{\nu-2}(\Omega_{r,r_1})}\right).
  \end{array}
$$
Since $r>0$ is small and $\|v_1\|_{C^{2,\alpha}_{\nu}(M_r)}<\gamma r^{2+l-\nu}$, then
\begin{equation}\label{eq055}
 \begin{array}{l}
  \|Q^1_{g_0}(v_1)- Q^1_{g_0}(v_2) \|_{C^{0,\alpha}(M_{r_1})} \\
  \hspace{3cm}\leq C(\|v_1\|_{C^{2,\alpha}_{\nu}(M_r)}+ \|v_2\|_{C^{2,\alpha}_{\nu}(M_r)}) \|v_1-v_2\|_{C^{2,\alpha}_{\nu}(M_r)}\\
  \hspace{3cm}\leq Cr^{2+l-\nu}\|v_1-v_2\|_{C^{2,\alpha}_{\nu}(M_r)}.
 \end{array}
\end{equation}

Now we have
$$\|Q^1_{g_0}(\Lambda G_p+u_\varphi+v_1)- Q^1_{g_0}(\Lambda G_p+u_\varphi+v_2) \|_{(0,\alpha),[\sigma,2\sigma]}$$
$$\leq C\sigma^{-4}\left(|\Lambda|\sigma^{2-\frac{n}{2}}+\|u_\varphi\|_{(2,\alpha),[\sigma,2\sigma]}+\sigma^{2+l}\right)\|v_1-v_2\|_{(2,\alpha),[\sigma,2\sigma]}$$
$$\leq C\left(\sigma^{-2-\frac{n}{2}}r^{\frac{n}{2}+\frac{l}{2}+\frac{\delta_5}{2}}+\sigma^{-3-n}r^{1+n+l-\delta_4}+\sigma^{-2+l}\right)\|v_1-v_2\|_{(2,\alpha),[\sigma,2\sigma]}.
$$
Then
$$\sigma^{2-\nu}\|Q^1_{g_0}(\Lambda G_p+u_\varphi+v_1)- Q^1_{g_0}(\Lambda G_p+u_\varphi+v_2) \|_{(0,\alpha),[\sigma,2\sigma]}$$
$$\leq C\left(r^{(l+\delta_5)/2}+r^{l-\delta_4}\right)\|v_1-v_2\|_{C^{2,\alpha}_{\nu}(M_r)},$$
and this together with (\ref{eq055}) implies that
\begin{equation}\label{eq115}
\|\mathcal{M}_r(\Lambda,\varphi,v_1)- \mathcal{M}_r(\lambda,\varphi,v_2)\|_{C_\nu^{2,\alpha}(M_r)}\leq \frac{1}{2}\|v_1-v_2\|_{C_\nu^{2,\alpha}(M_r)},
\end{equation}
for $r>0$ small enough. Therefore, from (\ref{eq054}) and (\ref{eq115}) we obtain the result.
\end{proof}

From Proposition \ref{propo07} we get the main result of this section.
\begin{theorem}\label{teo02}
Let $\nu\in(1-n,2-n)$ and $\eta>0$ small enough. Let $\beta$, $\gamma$, $\delta_4$, $\delta_5$ and $l$ be fixed positive constants such that $l>\{\delta_5,\delta_4\}$. There is $r_2>0$ such that if $r\in(0,r_2)$, $\Lambda\in\mathbb{R}$ with $|\Lambda|^2\leq r^{n+l+\delta_5}$ and $\varphi\in C^{2,\alpha}(\mathbb{S}_r^{n-1})$ which is $L^2-$orthogonal to the constant functions with $\|\varphi\|_{(2,\alpha),r}\leq \beta r^{2+l-\delta_4}$, then there is a solution $V_{\Lambda,\varphi}\in C^{2,\alpha}_\nu(M_r)$ to the problem
$$\left\{\begin{array}{l}
H_{{g_0}} (1+\Lambda G_{p}+ u_\varphi+V_{\Lambda,\varphi})=0\;\;\;\mbox{ in }\;\;\;M_r\\
(u_{\varphi}+V_{\Lambda,\varphi})\circ\Psi|_{\partial B_r(0)}-\varphi\in\mathbb{R}\;\;\;\mbox{ on }\;\;\;\partial M_r
\end{array}\right..$$

Moreover,
\begin{equation}\label{eq72}
\|V_{\Lambda,\varphi}\|_{C^{2,\alpha}_\nu(M_r)}\leq \gamma r^{2+l-\nu},
\end{equation}
and
\begin{equation}\label{eq86}
\|V_{\Lambda,\varphi_1}- V_{\Lambda,\varphi_2}\|_{C^{2,\alpha}_\nu(M_r)} \leq Cr^{\delta_6-\nu}\|\varphi_1-\varphi_2\|_{(2,\alpha),r},
\end{equation}
for some constant $\delta_6>0$ small enough independent of $r$.
\end{theorem}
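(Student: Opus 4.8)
The plan is to read off Theorem \ref{teo02} from Proposition \ref{propo07} together with the construction of the right inverse $G_{r,g_0}$ in Theorem \ref{teo03}. First I would let $V_{\Lambda,\varphi}\in C^{2,\alpha}_\nu(M_r)$ be the fixed point of $\mathcal M_r(\Lambda,\varphi,\cdot)$ in the ball of radius $\gamma r^{2+l-\nu}$ furnished by Proposition \ref{propo07}; this is exactly the bound (\ref{eq72}). Because $G_{r,g_0}$ from Theorem \ref{teo03} is a right inverse for $L_{g_0}^1$, the identity $V_{\Lambda,\varphi}=\mathcal M_r(\Lambda,\varphi,V_{\Lambda,\varphi})$ and the definition (\ref{eq60}) give
$$L_{g_0}^1(V_{\Lambda,\varphi})=-Q(\Lambda G_p+u_\varphi+V_{\Lambda,\varphi})-L_{g_0}^1(\Lambda G_p+u_\varphi),$$
so by the splitting (\ref{eq029}) we get $H_{g_0}(1+\Lambda G_p+u_\varphi+V_{\Lambda,\varphi})=0$ in $M_r$. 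For the boundary condition, Theorem \ref{teo03} produces functions that are constant on $\partial B_r(p)$, hence $V_{\Lambda,\varphi}\circ\Psi$ is constant on $\partial B_r(0)$; since $\eta\equiv1$ on $B_{3r}(0)$, since $r^{-\overline\gamma}|x|^{\overline\gamma}\equiv 1$ on $\partial B_r(0)$, and since $\mathcal Q_r(\psi)=\psi$ on $\partial B_r(0)$ by Proposition \ref{propo002}, we have $u_\varphi\circ\Psi|_{\partial B_r(0)}=\pi_r''(\varphi)+(\varphi-\pi_r''(\varphi))=\varphi$. Therefore $(u_\varphi+V_{\Lambda,\varphi})\circ\Psi|_{\partial B_r(0)}-\varphi=V_{\Lambda,\varphi}\circ\Psi|_{\partial B_r(0)}\in\mathbb R$, as required. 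Positivity of the conformal factor $1+\Lambda G_p+u_\varphi+V_{\Lambda,\varphi}$ follows as at the end of Section \ref{sec12}: the hypotheses $|\Lambda|^2\le r^{n+l+\delta_5}$, $G_p=O(|x|^{2-n/2})$, the estimate (\ref{eq031}), the bound (\ref{eq72}), and $\nu<0$ together force $\|\Lambda G_p+u_\varphi+V_{\Lambda,\varphi}\|_{L^\infty(M_r)}=O(r^c)$ for some $c>0$, which is $<1$ for $r$ small.

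It remains to prove the Lipschitz estimate (\ref{eq86}). Writing $V_i:=V_{\Lambda,\varphi_i}$ and inserting the intermediate term $\mathcal M_r(\Lambda,\varphi_1,V_2)$,
$$V_1-V_2=\bigl(\mathcal M_r(\Lambda,\varphi_1,V_1)-\mathcal M_r(\Lambda,\varphi_1,V_2)\bigr)+\bigl(\mathcal M_r(\Lambda,\varphi_1,V_2)-\mathcal M_r(\Lambda,\varphi_2,V_2)\bigr).$$
By the contraction estimate (\ref{eq115}) the first summand has $C^{2,\alpha}_\nu(M_r)$-norm at most $\tfrac12\|V_1-V_2\|_{C^{2,\alpha}_\nu(M_r)}$. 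For the second summand, note that $u_\varphi$ depends linearly on $\varphi$, so $u_{\varphi_1}-u_{\varphi_2}=u_{\varphi_1-\varphi_2}$, and (\ref{eq60}) gives
$$\mathcal M_r(\Lambda,\varphi_1,V_2)-\mathcal M_r(\Lambda,\varphi_2,V_2)=-G_{r,g_0}\!\left(\int_0^1\tfrac{d}{dt}Q(\Lambda G_p+u_{\varphi_2}+V_2+t\,u_{\varphi_1-\varphi_2})\,dt+L_{g_0}^1(u_{\varphi_1-\varphi_2})\right).$$
I would now estimate $L_{g_0}^1(u_{\varphi_1-\varphi_2})$ using (\ref{eq030}) and (\ref{eq031}) exactly as the term $L_g^1(\Lambda G_p+u_\varphi)$ was estimated in the proof of Proposition \ref{propo07}, and estimate the $Q$-difference as a first-order perturbation via (\ref{eq018}) and the smallness of $\Lambda G_p+u_{\varphi_2}+V_2$, again following the $Q$-estimates in that proof. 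Combined with the operator-norm bound $\|G_{r,g_0}\|\le Cr^{-\eta}$ of Theorem \ref{teo03}, this yields
$$\|\mathcal M_r(\Lambda,\varphi_1,V_2)-\mathcal M_r(\Lambda,\varphi_2,V_2)\|_{C^{2,\alpha}_\nu(M_r)}\le Cr^{\delta_6-\nu}\|\varphi_1-\varphi_2\|_{(2,\alpha),r}$$
for some $\delta_6>0$ independent of $r$. Absorbing the $\tfrac12\|V_1-V_2\|$ term into the left-hand side gives (\ref{eq86}).

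The only genuinely computational step is the last one: re-running the nonlinear estimates of Proposition \ref{propo07} with the increment $\varphi_1-\varphi_2$ in place of $\varphi$ to control the $\varphi$-dependence of $\mathcal M_r$. Everything else—unwinding that the fixed point solves the PDE, checking the boundary value, and verifying positivity—is immediate from the way $\mathcal M_r$ and the right inverses were built. I expect the bookkeeping of exponents (making sure $\delta_6>0$ survives after multiplying by $r^{-\eta}$ with $\eta$ chosen small, and that $l>\max\{\delta_4,\delta_5\}$ is enough) to be the only place where care is needed, and this is dictated by the constraints already imposed in Proposition \ref{propo07}.
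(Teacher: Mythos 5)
Your proposal is correct and follows essentially the same route as the paper: take the fixed point of $\mathcal{M}_r(\Lambda,\varphi,\cdot)$ from Proposition \ref{propo07} (which gives existence and (\ref{eq72})), read off the equation and the boundary condition from the construction of $G_{r,g_0}$ and of $u_\varphi$, and obtain (\ref{eq86}) by inserting the intermediate term $\mathcal{M}_r(\Lambda,\varphi_1,V_2)$, using the contraction property to absorb half the difference, and re-running the estimates of Proposition \ref{propo07} with $u_{\varphi_1-\varphi_2}$ — which is exactly how the paper derives (\ref{eq86}) "similarly to (\ref{eq82})". Your added verifications (boundary value of $u_\varphi$, constancy of $V_{\Lambda,\varphi}$ on $\partial M_r$, positivity of the conformal factor) are correct and only make explicit what the paper leaves implicit.
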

\begin{proof} The solution $V_{\Lambda,\varphi}$ is the fixed point of $\mathcal{M}_r(\Lambda,\varphi,\cdot)$ given by Proposition \ref{propo07} with the estimate (\ref{eq72}). The inequality (\ref{eq86}) follows similarly to (\ref{eq82}), that is, it follows by the estimates obtained by the proof of the Proposition \ref{propo07}.
\end{proof}

If $g$ is the metric given in the previous section, then there is a function $f$ such that $g_0=f^{\frac{8}{n-4}}g$ and in the normal coordinate system centered at $p$ with respect to $g$ we have $f=1+O(|x|^2)$, in fact, $f=1/\mathcal F$. We will denote the full conformal factor of the resulting constant $\sigma_2-$curvature metric in $M_r$ with respect to the metric $g$ as $\mathcal{B}_r(\Lambda,\varphi)$, that is, the metric
\begin{equation}\label{eq021}
\tilde{g}=\mathcal{B}_r(\Lambda,\varphi)^{\frac{8}{n-4}}g_0
\end{equation}
has $\sigma_2(A_g)=n(n-1)/8$, where
$$\mathcal{B}_r(\Lambda,\varphi):=f+\Lambda fG_p+fu_{\varphi}+fV_{\Lambda,\varphi}.$$

\section{Gluing the initial data}\label{sec05}
By the Theorem \ref{teo01} there exists a family of constant $\sigma_2-$curvature metrics in  $\overline{B_{r_\varepsilon}(p)}\backslash\{p\}$, for small enough $r_\varepsilon=\varepsilon^s>0$, $0<s<1$, satisfying the following:
$$\hat{g}=\mathcal{A}_\varepsilon(R,a,\phi)^{\frac{8}{n-4}}g,$$
with $\sigma_2(A_{\hat{g}})=n(n-1)/8$, where

\begin{equation*}\label{eq64}
\mathcal{A}_\varepsilon(R,a,\phi)=u_{\varepsilon,R,a}+ r_\varepsilon^{-\overline\gamma}|x|^{\overline\gamma}v_\phi+h+U_{\varepsilon,R,a,\phi},
\end{equation*}
in conformal normal coordinates centered at $p$, and with
\begin{enumerate}
\item[I1)] $R^{\frac{4-n}{4}}=2(1+b)\varepsilon^{\frac{4-n}{4}}$ with $|b|\leq 1/2$;
\item[I2)]  $\phi\in C^{2,\alpha} (\mathbb{S}^{n-1}_{r_\varepsilon})^\perp$ with $\|\phi\|_{(2,\alpha),r_\varepsilon}\leq \kappa r_\varepsilon^{2+l-\delta_1}$, $l>0$ and $\delta_1>0$ is small and $\kappa>0$ is some constant to be chosen later;
\item[I3)] $|a|r_\varepsilon^{1-\delta_2}\leq 1$ is small with $3\delta_2>\max\{\delta_1,l\}$;
\item[I4)] $h=\dfrac{1}{2}\left((1-\overline\gamma)r_\varepsilon^{-\overline\gamma-1}|x|^{\overline\gamma+1}+(\overline\gamma+1)r_\varepsilon^{-\overline\gamma+1}|x|^{\overline\gamma-1}\right)\overline f$, with $\overline f=O(|x|^2)$.
\item[I5)] $U_{\varepsilon,R,a,\phi}\in C^{2,\alpha}_{(\gamma,\overline\gamma)}(B_{r}(0)\backslash\{0\})$ with $\pi''_{r_\varepsilon}(U_{\varepsilon,R,a,\phi}|_{\partial B_{r_\varepsilon}(0)})=0$, $\gamma=\delta_{n,2}+1-n/4-\varepsilon_1$, $\overline\gamma=n/4+1+\varepsilon_1$, $\varepsilon_1>0$ small, and satisfies the inequalities (\ref{eq71}) and (\ref{eq82}).
\end{enumerate}

Also, from Theorem \ref{teo02} there exists a family of constant $\sigma_2-$curvature metrics in $M_{r_\varepsilon}=M\backslash B_{r_\varepsilon}(p)$, given by (\ref{eq021}), for small enough $r_\varepsilon>0$, satisfying the following:
$$\tilde{g}=\mathcal{B}_{r_\varepsilon}(\lambda,\varphi)^ {\frac{8}{n-4}}g,$$
with $\sigma_2(A_{\tilde{g}})=n(n-1)/8$, where
\begin{equation*}\label{eq65}
\mathcal{B}_{r_\varepsilon}(\Lambda,\varphi)=f+\Lambda f G_p+fu_\varphi+ fV_{\lambda,\varphi},
\end{equation*}
 in conformal normal coordinates centered at $p$, with
\begin{enumerate}
\item[E1)] $f=1+\overline{f}$ with $\overline{f}=O(|x|^2)$;
\item[E2)] $\Lambda\in\mathbb{R}$ with $|\Lambda|^2\leq r_\varepsilon^{n+l+\delta_5}$, with $l>\delta_5$;
\item[E3)] $\varphi\in C^{2,\alpha}(\mathbb{S}^{n-1}_{r})$ is $L^2-$orthogonal to the constant functions and belongs to the ball of radius $\beta r_\varepsilon^{2+l-\delta_4}$ with $\beta$ a positive constant to be chosen later and $\delta_4<l$;
\item[E4)] $V_{\Lambda,\varphi}\in C^{2,\alpha}_\nu(M_{r_\varepsilon})$ is constant on $\partial M_{r_\varepsilon}$, satisfies the inequality (\ref{eq72}) and (\ref{eq86}).
\end{enumerate}

We want to show that there are parameters, $R\in\mathbb{R}_+$, $a\in\mathbb{R}^n$, $\Lambda\in\mathbb{R}$ and functions $\varphi,\phi\in C^{2,\alpha}(\mathbb{S}^{n-1}_{r_\varepsilon})$ such that
\begin{equation}\label{eq47}
\left\{\begin{array}{lcl}
\mathcal{A}_\varepsilon(R,a,\phi) & = & \mathcal{B}_{r_\varepsilon}(\Lambda,\varphi)\\
\partial_r\mathcal{A}_\varepsilon(R,a,\phi) & = & \partial_r\mathcal{B}_{r_\varepsilon}(\Lambda,\varphi)
\end{array}\right.
\end{equation}
on $\partial B_{r_\varepsilon}(p)$. 

First, we take $\overline f$ in I4) equal to $\overline f$ from $E1)$, $\delta_1$ in $I2)$ equal to $\delta_4$ in $E3$. Now, if we take $\omega$ and $\vartheta$ in the ball of radius $r_\varepsilon^{2+l-\delta_1}$ in $C^{2,\alpha}(\mathbb{S}^{n-1}_{r_\varepsilon})$, with $\omega$ belonging to the space spanned by the coordinate functions, $\vartheta$ belonging to the high frequencies space, and we define $\varphi:=\omega+\vartheta$, then we can apply Theorem \ref{teo02} to define the function $\mathcal{B}_{r_\varepsilon}( \Lambda,\omega+\vartheta)$, since $\|\varphi\|_{(2,\alpha),r_\varepsilon}\leq 2r_\varepsilon^{2+l-\delta_1}$.

Note that, by the definition of the function $h$ in I4), we obtain
$$\pi''_{r_\varepsilon}(\mathcal A_\varepsilon(R,a,\phi))=\phi+\pi''_{r_\varepsilon}(u_{\varepsilon,R,a}+\overline f)$$
and by the definition of $u_\varphi$ and $G_p$ in Section \ref{sec10}, we obtain that
$$\pi''_{r_\varepsilon}(\mathcal B_r(\Lambda,\varphi))=\vartheta+\pi''_{r_\varepsilon}(\overline f+\Lambda\overline f|x|^{2-\frac{n}{2}}+\overline fu_{\varphi}+\overline fV_{\Lambda,\varphi}).$$
where we are using that $\pi''_{r_\varepsilon}(u_{\varphi}|_ {\mathbb{S}^{n-1}_{r_\varepsilon}})=\vartheta$, $\pi''_{r_\varepsilon}(V_{\Lambda,\varphi} |_{\mathbb{S}^{n-1}_{r_\varepsilon}})=0$ and $f=1+\overline{f}$. 

Now define
\begin{equation}\label{eq66}
\begin{array}{lcl}
\phi_\vartheta & := & \pi''_{r_\varepsilon}((\mathcal{B}_{r_\varepsilon}( \Lambda,\omega+\vartheta)-u_{\varepsilon,R,a}-\overline f) |_{\mathbb{S}_{r_\varepsilon}^{n-1}})\\
\\
& = & \pi''_{r_\varepsilon}((\Lambda \overline f|x|^{2-\frac{n}{2}} +\overline{f}u_{\omega+\vartheta}+\overline{f} V_{\Lambda,\omega+\vartheta}- u_{\varepsilon,R,a} )|_{\mathbb{S}_{r_\varepsilon}^{n-1}})+\vartheta.
\end{array}
\end{equation}

Lets derive an estimate for $\|\phi_{\vartheta}\|_{(2,\alpha),r_\varepsilon}$.

Note that, from (\ref{eq93}), we get
\begin{equation}\label{eq92}
\pi''_{r_\varepsilon}(u_{\varepsilon,R,a} |_{\mathbb{S}_{r_\varepsilon}^{n-1}})=O(|a|^2r_\varepsilon^2),
\end{equation}
since $r_\varepsilon=\varepsilon^s$, with $s\in(0,1)$ small enough and $R^{\frac{4-n}{4}}=2(1+b)\varepsilon^{\frac{4-n}{4}}$ with $|b|\leq 1/2$, implies that $R<r_\varepsilon$.
Lets consider $a\in\mathbb{R}^n$ with $|a|^2\leq r_\varepsilon^{l}$. Hence we have that $|a|r_\varepsilon^{1-\delta_3}\leq r_\varepsilon^{1+\frac{l}{2}-\delta_3}$ tends to zero when $\varepsilon$ goes to zero, and I3) is satisfied for $\varepsilon>0$ small enough. Furthermore, since $|a|^2r_\varepsilon^2\leq r_\varepsilon^{2+l}$, we can show that
\begin{equation}\label{eq90}
\|\pi''_{r_\varepsilon}(u_{\varepsilon,R,a} |_{\mathbb{S}_{r_\varepsilon}^{n-1}})\|_{(2,\alpha),r_\varepsilon}\leq Cr_\varepsilon^{2+l},
\end{equation}
for some constant $C>0$ independent of $\varepsilon$, $R$ and $a$.

Observe that, E2) implies
\begin{equation}\label{eq91}
\|\pi''_{r_\varepsilon}(\Lambda \overline f|x|^{2-\frac{n}{2}})|_{\mathbb{S}^{n-1}_{r_\varepsilon}})\|_{(2,\alpha),r_\varepsilon}\leq Cr_\varepsilon^{2+p}.
\end{equation}
Now, using (\ref{eq031}), (\ref{eq72}), (\ref{eq66}) and the fact that $\overline{f}=O(|x|^2)$, we deduce that
\begin{equation}\label{eq99}
\|\phi_\vartheta-\vartheta \|_{(2,\alpha),r_\varepsilon}\leq cr_\varepsilon^{2+p},
\end{equation}
and
$$\|\phi_\vartheta\|_{(2,\alpha),r_\varepsilon}\leq cr_\varepsilon^{2+p-\delta_1},$$
for every $\vartheta\in \pi''(C^{2,\alpha}(\mathbb{S}^{n-1}_{r_\varepsilon}))$ in the ball of radius $r_\varepsilon^{2+p-\delta_1}$ and for some constant $c>0$ that does not depend on $\varepsilon$.  Therefore we can apply Theorem \ref{teo01} with $\kappa$ equal to this constant $c$ and $\mathcal{A}_\varepsilon(R,a,\phi_\vartheta)$ is well defined. The definition (\ref{eq66}) immediately yields
$$\pi''_{r_\varepsilon}(\mathcal{A}_\varepsilon(R,a,\phi_\vartheta) |_{\mathbb{S}_{r_\varepsilon}^{n-1}}) =\pi''_{r_\varepsilon}(\mathcal{B}_{r_\varepsilon} (\Lambda,\omega+\vartheta) |_{\mathbb{S}_{r_\varepsilon}^{n-1}}).$$

We project the second equation of the system (\ref{eq47}) on the high  frequencies space, the space of functions which are $L^2(\mathbb{S}^{n-1})-$orthogonal to $e_0$ $,\ldots,e_n$. This yields a nonlinear equation which can be written as
\begin{equation}\label{eq89}
r_\varepsilon \partial_r(v_{\vartheta}- u_{\vartheta})+ \mathcal{S}_\varepsilon(a,b,\Lambda,\omega,\vartheta)=0,
\end{equation}
on $\partial_rB_{r_\varepsilon}(0)$, where
\begin{equation}\label{eq061}
\begin{array}{c}
 \mathcal{S}_\varepsilon(a,b,\Lambda,\omega,\vartheta) = r_\varepsilon\partial_r v_{\phi_\vartheta-\vartheta}+   r_\varepsilon\partial_r \pi''_{r_\varepsilon}(u_{\varepsilon,R,a}|_{\mathbb{S}^{n-1}_{r_\varepsilon}}) \\
 + r_\varepsilon \partial_r \pi''_{r_\varepsilon}((U_{\varepsilon,R,a,\phi_\vartheta} -\Lambda\overline fG_p-\overline{f}u_{\omega+\vartheta})|_{ \mathbb{S}^{n-1}_{r_\varepsilon}})
- r_\varepsilon \partial_r \pi''_{r_\varepsilon}((\overline fV_{\Lambda,\omega+\vartheta}) |_{ \mathbb{S}^{n-1}_{r_\varepsilon}}).
\end{array}
\end{equation}

Note that here and in (\ref{eq66}) we are using the term $h$ in $\mathcal A_\varepsilon(R,a,\cdot)$ to cancel the terms $r_\varepsilon\partial_r\overline f$ and $\overline f$, respectively, which does not have the right decay, see (\ref{eq66}) and (\ref{eq061}). Also the definition of $u_\varphi$. in Section \ref{sec10} is important because we have the term $r^{-\overline\gamma}|x|^{\overline\gamma}v_\phi$ in $\mathcal A_\varepsilon(R,a,\cdot)$.

The map $\mathcal Z: C^{2,\alpha}(\mathbb{S}^{n-1})^\perp\rightarrow C^{0,\alpha}(\mathbb{S}^{n-1})^\perp$ defined by 
$$\mathcal{Z}(\vartheta):= \partial_r(v_\vartheta-\mathcal{Q}_1(\vartheta)),$$
is an isomorphism (see \cite{M}, proof of Proposition 8 in \cite{MR1712628} and proof of Proposition 2.6 in \cite{MR1763040}). On the other hand, for any $\vartheta\in C^{2,\alpha}(\mathbb{S}^{n-1}_{r_\varepsilon})^\perp$ we have
$$r_\varepsilon\partial_r (v_\vartheta- \mathcal{Q}_{r_\varepsilon}(\vartheta)) (r_\varepsilon\cdot) =\mathcal Z(\overline\vartheta),$$
where $\overline\vartheta=\vartheta(r_\varepsilon\cdot)$, see \cite{MR2639545} for more details.
Therefore to solve the equation (\ref{eq89}) it is enough to show that the map $\mathcal H_\varepsilon(a,b,\Lambda,\omega,\cdot): \mathcal{D}_\varepsilon \rightarrow C^{2,\alpha}(\mathcal{S}^{n-1})^\perp$ given by
$$\mathcal H_\varepsilon(a,b,\Lambda,\omega,\vartheta)= -\mathcal{Z}^{-1}(\mathcal{S}_\varepsilon(a,b,\Lambda,\omega, \vartheta_{r_\varepsilon}) (r_\varepsilon\cdot)),$$
has a fixed point, where $\mathcal{D}_\varepsilon:=\{\vartheta\in \pi''(C^{2,\alpha}(\mathbb{S}^{n-1}));\|\vartheta\|_{(2,\alpha),1}\leq r_\varepsilon^{2+l-\delta_1}\}$ and $\vartheta_{r_\varepsilon}(x):=\vartheta(r_\varepsilon^{-1}x)$.

\begin{lemma}\label{lem08}
There is a constant $\varepsilon_0>0$ such that if $\varepsilon\in(0,\varepsilon_0)$, $a\in\mathbb{R}^n$ with $|a|^2\leq r_\varepsilon^{l}$, $b$ and $\Lambda$ in $\mathbb{R}$ with $|b|\leq 1/2$ and $|\Lambda|^2\leq r_\varepsilon^{n+l+\delta_5}$, and $\omega\in C^{2,\alpha}(\mathbb{S}^{n-1}_{r_\varepsilon})$ belongs to the space spanned by the coordinate functions and with norm bounded by $r_\varepsilon^{2+l-\delta_1}$, then the map $\mathcal H_\varepsilon(a,b,\Lambda,\omega,\cdot)$ has a fixed point in $\mathcal{D}_\varepsilon$.
\end{lemma}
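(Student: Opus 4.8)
The plan is to solve the fixed-point equation for $\mathcal{H}_\varepsilon(a,b,\Lambda,\omega,\cdot)$ by the Banach contraction principle on the ball $\mathcal{D}_\varepsilon$, exactly as in the companion statements of Proposition \ref{propo03} and Proposition \ref{propo07}. Since $\mathcal{Z}^{-1}$ is a bounded operator with norm independent of $\varepsilon$ (it is the inverse of a fixed isomorphism on $C^{2,\alpha}(\mathbb{S}^{n-1})^\perp$, transplanted to scale $r_\varepsilon$ via the scaling identity $r_\varepsilon\partial_r(v_\vartheta-\mathcal{Q}_{r_\varepsilon}(\vartheta))(r_\varepsilon\cdot)=\mathcal{Z}(\overline\vartheta)$ recalled just above the statement), it suffices to prove two things: first, that $\mathcal{S}_\varepsilon(a,b,\Lambda,\omega,0)$, measured after rescaling to the unit sphere, has $C^{0,\alpha}(\mathbb{S}^{n-1})^\perp$-norm at most $\tfrac12 C^{-1} r_\varepsilon^{2+l-\delta_1}$ where $C=\|\mathcal{Z}^{-1}\|$; second, that $\vartheta\mapsto\mathcal{S}_\varepsilon(a,b,\Lambda,\omega,\vartheta_{r_\varepsilon})(r_\varepsilon\cdot)$ is Lipschitz on $\mathcal{D}_\varepsilon$ with constant $\le\tfrac12 C^{-1}$. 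Both follow by estimating the four groups of terms in \eqref{eq061} individually.

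First I would bound $\mathcal{S}_\varepsilon(a,b,\Lambda,\omega,0)$ term by term. The term $r_\varepsilon\partial_r v_{\phi_\vartheta-\vartheta}$ is controlled by \eqref{eq73} in Proposition \ref{propo02} together with the estimate \eqref{eq99} for $\|\phi_\vartheta-\vartheta\|_{(2,\alpha),r_\varepsilon}$, giving a contribution $O(r_\varepsilon^{1+p})$ on $\mathbb{S}^{n-1}_{r_\varepsilon}$, hence $O(r_\varepsilon^{2+p})$ after the scaling that brings $\partial_r$ to the unit sphere. The term $r_\varepsilon\partial_r\pi''_{r_\varepsilon}(u_{\varepsilon,R,a}|_{\mathbb{S}^{n-1}_{r_\varepsilon}})$ is handled by Corollary \ref{cor02}: since $R<r_\varepsilon$, the expansion \eqref{eq93} shows the high-frequency part of $u_{\varepsilon,R,a}$ is $O''(|a|^2\varepsilon^{\frac{n-4}{4}}R^{\frac{4-n}{4}}|x|^2)=O''(|a|^2|x|^2)$, which with $|a|^2\le r_\varepsilon^l$ contributes $O(r_\varepsilon^{2+l})$ as in \eqref{eq90}; the remaining terms involving $U_{\varepsilon,R,a,\phi_\vartheta}$, $\Lambda\overline f G_p$, $\overline f u_{\omega+\vartheta}$, and $\overline f V_{\Lambda,\omega+\vartheta}$ are estimated using \eqref{eq71}, the bound $|\Lambda|^2\le r_\varepsilon^{n+l+\delta_5}$ together with $G_p=O(|x|^{2-n/2})$, \eqref{eq031}, \eqref{eq72}, and crucially the extra factor $\overline f=O(|x|^2)$ which lifts the decay rate enough to absorb the $r_\varepsilon^{-\overline\gamma}$ or $r_\varepsilon^{-\nu}$ losses in those norms. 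Collecting, $\|\mathcal{S}_\varepsilon(\cdot,0)(r_\varepsilon\cdot)\|_{(0,\alpha),1}\le C r_\varepsilon^{2+l-\delta_1+\delta_7}$ for some $\delta_7>0$, which is $\le\tfrac12 C^{-1}r_\varepsilon^{2+l-\delta_1}$ for $\varepsilon$ small.

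For the Lipschitz estimate, I would write $\mathcal{S}_\varepsilon(a,b,\Lambda,\omega,\vartheta_1)-\mathcal{S}_\varepsilon(a,b,\Lambda,\omega,\vartheta_2)$ and note that the dependence on $\vartheta$ enters only through $\phi_{\vartheta}$ (linearly modulo the error \eqref{eq99}, whose Lipschitz constant in $\vartheta$ is $O(r_\varepsilon^{p})$ by the same Proposition \ref{propo02}/Theorem \ref{teo02} arguments), through $U_{\varepsilon,R,a,\phi_\vartheta}$ (Lipschitz in $\phi$ by \eqref{eq82}, with constant $C r_\varepsilon^{\delta_4-\overline\gamma}$, which after the $r_\varepsilon^{1+\overline\gamma}$ gain from $r_\varepsilon\partial_r$ on the $\overline\gamma$-weighted space becomes a positive power of $r_\varepsilon$), and through $u_{\omega+\vartheta}$ and $V_{\Lambda,\omega+\vartheta}$ (Lipschitz by \eqref{eq031} and \eqref{eq86}, again with a surviving power of $r_\varepsilon$ thanks to the $\overline f$ factor). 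Since the identity part $r_\varepsilon\partial_r(v_\vartheta-u_\vartheta)$ has already been inverted by $\mathcal{Z}^{-1}$, what remains is genuinely a small perturbation, so the composed map contracts with constant $\le\tfrac12$ for $\varepsilon$ small. The contraction mapping theorem on $\mathcal{D}_\varepsilon$ then yields the fixed point.

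The main obstacle I expect is bookkeeping the exponents: one must verify that every one of the several error terms in \eqref{eq061} genuinely decays faster than $r_\varepsilon^{2+l-\delta_1}$ after accounting for the $r_\varepsilon^{-\overline\gamma}$, $r_\varepsilon^{-\nu}$, and $r_\varepsilon^{\delta_4-\overline\gamma}$ losses carried by the norms from Theorems \ref{teo01} and \ref{teo02}, and this works only because (i) $\overline f=O(|x|^2)$ supplies two extra powers wherever it multiplies a bad term, (ii) the operator $r_\varepsilon\partial_r$ evaluated on a function of weighted order $\mu$ on $\mathbb{S}^{n-1}_{r_\varepsilon}$ and rescaled to $\mathbb{S}^{n-1}$ produces a factor $r_\varepsilon^{\mu}$ rather than $r_\varepsilon^{\mu-1}$, and (iii) the smallness constraints $|a|^2\le r_\varepsilon^l$, $|\Lambda|^2\le r_\varepsilon^{n+l+\delta_5}$, $\|\omega\|\le r_\varepsilon^{2+l-\delta_1}$ were chosen precisely so that the quadratic-in-$a$ and linear-in-$\Lambda$ high-frequency remnants land below the threshold. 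Once the exponent inequalities $p>\delta_1$, $1+\overline\gamma+\delta_4-\overline\gamma>2+l-\delta_1$ (i.e. the power gained from $r_\varepsilon\partial_r$ on the $\overline\gamma$-space beats the loss), and the analogous ones for the $\nu$-space are recorded, the proof is mechanical and parallels the proofs of Propositions \ref{propo03} and \ref{propo07} already given.
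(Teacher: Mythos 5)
Your proposal follows essentially the same route as the paper's proof: bound $\mathcal{S}_\varepsilon(a,b,\Lambda,\omega,0)$ by $cr_\varepsilon^{2+l}$ using (\ref{eq73}), (\ref{eq71}), (\ref{eq031}), (\ref{eq72}), (\ref{eq90}), (\ref{eq91}) and $\overline f=O(|x|^2)$, then establish the Lipschitz/contraction estimate by tracking the $\vartheta$-dependence through $\phi_\vartheta$, $U_{\varepsilon,R,a,\phi_\vartheta}$, $u_{\omega+\vartheta}$ and $V_{\Lambda,\omega+\vartheta}$ via (\ref{eq66}), (\ref{eq82}), (\ref{eq86}), (\ref{eq031}), and conclude with the contraction mapping theorem on $\mathcal{D}_\varepsilon$, exploiting the boundedness of $\mathcal{Z}^{-1}$. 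This matches the paper's argument in both structure and the specific estimates invoked.
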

\begin{proof}
First note that by (\ref{eq99}), $\phi_0$ satisfies
$$\|\phi_0\|_{(2,\alpha),r_\varepsilon}\leq cr_\varepsilon^{2+l},$$
where the constant $c>0$ is independent of $\varepsilon$.

From (\ref{eq73}), (\ref{eq71}), (\ref{eq031}),  (\ref{eq72}), (\ref{eq90}) and (\ref{eq91}) and the fact that $\overline{f}=O(|x|^2)$ we obtain
\begin{equation*}\label{eq97}
 \|\mathcal{S}_\varepsilon (a,b,\lambda,\omega,0)\|_{(1,\alpha),r_\varepsilon}\leq cr_\varepsilon^{2+l},
\end{equation*}
for some constant $c>0$ independent of $\varepsilon$. 

Now, if $\vartheta_1,\vartheta_2\in\mathcal{D}_\varepsilon$, then
$$\hspace{-4cm}\|\mathcal H_\varepsilon(a,b,\Lambda,\omega,\vartheta_1)- \mathcal H_\varepsilon(a,b,\Lambda,\omega,\vartheta_2)\|_{(2,\alpha),1}$$
$$\hspace{-3cm}\leq C\left(\|r_\varepsilon\partial_r v_{\phi_{\vartheta_{r_\varepsilon,1}}-\vartheta_{r_\varepsilon,1} -(\phi_{\vartheta_{r_\varepsilon,2}} -\vartheta_{r_\varepsilon,2})}\|_{(1,\alpha),r_\varepsilon}\right.$$
$$\hspace{-1cm}+ \|r_\varepsilon \partial_r\pi''_{r_\varepsilon} ((U_{\varepsilon,R,a,\phi_{\vartheta_{r_\varepsilon,1}}}- U_{\varepsilon,R,a,\phi_{\vartheta_{r_\varepsilon,2}}}) |_{\mathbb{S}^{n-1}_{r_\varepsilon}}) \|_{(1,\alpha),r_\varepsilon}$$
$$+ \|r_\varepsilon \partial_r\pi''_{r_\varepsilon}( (f(V_{\Lambda,\omega+\vartheta_{r_\varepsilon,1}}- V_{\Lambda,\omega+\vartheta_{r_\varepsilon,2}})) |_{\mathbb{S}^{n-1}_{r_\varepsilon}})\|_{(1,\alpha),r_\varepsilon}$$
$$\left.+\|r_\varepsilon \partial_r \pi''_{r_\varepsilon}((\overline{f}u_{\vartheta_{r_\varepsilon,1} -\vartheta_{r_\varepsilon,2}})|_{\mathbb{S}^{n-1}_{r_\varepsilon}}) \|_{(1,\alpha),r_\varepsilon}\right),$$
where, by (\ref{eq66}) we get
$$\phi_{\vartheta_{r_\varepsilon,1}}-\vartheta_{r_\varepsilon,1} -(\phi_{\vartheta_{r_\varepsilon,2}} -\vartheta_{r_\varepsilon,2})=\pi''_{r_\varepsilon} ((\overline{f}u_{\vartheta_{r_\varepsilon,1}-\vartheta_{r_\varepsilon,2}} +\overline{f} (V_{\lambda,\omega+\vartheta_{r_\varepsilon,1}}- V_{\lambda,\omega+\vartheta_{r_\varepsilon,2}})) |_{\mathbb{S}_{r_\varepsilon}^{n-1}}).$$

Using the inequalities (\ref{eq031}), (\ref{eq86}) and the fact that $\overline{f}=O(|x|^2)$, we obtain
$$\|\phi_{\vartheta_{r_\varepsilon,1}}-\vartheta_{r_\varepsilon,1}- (\phi_{\vartheta_{r_\varepsilon,2}}-\vartheta_{r_\varepsilon,2}) \|_{(2,\alpha),r_\varepsilon}\leq cr_\varepsilon^{\delta_6} \|\vartheta_{r_\varepsilon,1}- \vartheta_{r_\varepsilon,2}\|_{(2,\alpha),r_\varepsilon},$$
for some constants $\delta_6>0$ and $c>0$ that does not depend on $\varepsilon$. Using (\ref{eq73}) we have that
\begin{equation}\label{eq108}
\|r_\varepsilon\partial_r v_{\phi_{\vartheta_{r_\varepsilon,1}}-\vartheta_{r_\varepsilon,1} -(\phi_{\vartheta_{r_\varepsilon,2}} -\vartheta_{r_\varepsilon,2})}\|_{(1,\alpha),r_\varepsilon}\leq cr_\varepsilon^{\delta_6}\|\vartheta_1- \vartheta_2\|_{(2,\alpha),1}.
\end{equation}

From (\ref{eq82}) and (\ref{eq86}) we conclude that
$$\|U_{\varepsilon,R,a,\phi_{\vartheta_{r_\varepsilon,1}}}- U_{\varepsilon,R,a,\phi_{\vartheta_{r_\varepsilon,2}}}\|_{(2,\alpha), [\frac{1}{2}r_\varepsilon,r_\varepsilon]}\leq Cr_\varepsilon^{\delta_1} \|\vartheta_{r_\varepsilon,1}- \vartheta_{r_\varepsilon,2}\|_{(2,\alpha),r_\varepsilon}$$
and
$$\|V_{\Lambda,\omega+\vartheta_{r_\varepsilon,1}}- V_{\Lambda,\omega+\vartheta_{r_\varepsilon,2}}\|_{(2,\alpha), [r_\varepsilon,2r_\varepsilon]}\leq Cr_\varepsilon^{\delta_5} \|\vartheta_{r_\varepsilon,1}- \vartheta_{r_\varepsilon,2}\|_{(2,\alpha),r_\varepsilon},$$
for some $\delta_1>0$ and $\delta_5>0$ independent of $\varepsilon$. From this, (\ref{eq031}) and the fact that $f=1+\overline{f}$, we derive an estimate as (\ref{eq108}) for the other terms, and therefore we get
\begin{equation}\label{eq98}
\|\mathcal H_\varepsilon(a,b,\Lambda,\omega,\vartheta_1)- \mathcal H_\varepsilon(a,b,\Lambda,\omega,\vartheta_2)\|_{(2,\alpha),1}\leq \frac{1}{2} \|\vartheta_1-\vartheta_2\|_{(2,\alpha),1},
\end{equation}
for all $\vartheta_1,\vartheta_2\in\mathcal{D}_\varepsilon$, since $\varepsilon>0$ is small enough. From this and (\ref{eq98}) we get the result.
\end{proof}

Therefore there exists a unique solution of (\ref{eq89}) in the ball of radius $r_\varepsilon^{2+l-\delta_1}$ in $C^{2,\alpha}(\mathbb{S}^{n-1}_{r_\varepsilon})$. We denote by $\vartheta_{\varepsilon, a,b,\Lambda,\omega}$ this solution given by Lemma \ref{lem08}. Since this  solution is obtained through the application of fixed point theorem for  contraction mappings, it is continuous with respect to the parameters $\varepsilon$, $a$, $b$, $\Lambda$ and $\omega$.

Now, recall that $R^{\frac{4-n}{4}}=2(1+b)\varepsilon^{\frac{4-n}{4}}$ with $|b|\leq 1/2$. Hence, using (\ref{eq92}) and Corollary \ref{cor02} and \ref{cor04} we show that
$$\begin{array}{rcl}
u_{\varepsilon,R,a}(r_\varepsilon\theta) & = & 1+b+\displaystyle \frac{\varepsilon^{\frac{n-4}{2}}}{4(1+b)} {r_\varepsilon}^{2-\frac{n}{2}}+ \left(\frac{n-4}{2}u_{\varepsilon,R}(r_\varepsilon\theta)+r\partial_r u_{\varepsilon,R}(r_\varepsilon\theta)\right)a\cdot x\\
& + & O(|a|^2r_\varepsilon^2) +O(\varepsilon^{\frac{n+4}{2}}{r_\varepsilon} ^{-\frac{n}{2}}),
\end{array}$$
where the last term does not depend on $\theta$. Hence, we have
$$\mathcal{A}_\varepsilon(R,a,\phi_{\vartheta_ {\varepsilon,a,b,\lambda,\omega}})(r_\varepsilon\theta) =1+b+ \displaystyle\frac{\varepsilon^{\frac{n-4}{2}}}{4(1+b)}{r_\varepsilon}^{2-\frac{n}{2}} +v_{\phi_{\vartheta_{\varepsilon, a,b,\lambda,\omega}}}(r_\varepsilon\theta)+\overline f(r_\varepsilon\theta)$$
$$+ \left(\frac{n-4}{2}u_{\varepsilon,R}(r_\varepsilon\theta)+ r_\varepsilon \partial_ru_{\varepsilon,R}(r_\varepsilon\theta)\right)r_\varepsilon a\cdot \theta $$
$$\hspace{2cm}+ U_{\varepsilon,R,a,\phi_{\vartheta_{\varepsilon, a,b,\lambda,\omega}}}(r_\varepsilon\theta)+ O(|a|^2r_\varepsilon^2)+ O(\varepsilon^{\frac{n+4}{2}}r_\varepsilon^{-\frac{n}{2}}).$$

In the exterior manifold $M_{r_\varepsilon}$, in conformal normal coordinate system in the neighborhood of $\partial M_{r_\varepsilon}$, namely $\Omega_{r_\varepsilon,\frac{1}{2}r_1}$, we have
$$\mathcal{B}_{r_\varepsilon}(\Lambda,\omega+\vartheta_{\varepsilon, a,b,\Lambda,\omega})(r_\varepsilon\theta) = 1+\Lambda r_\varepsilon^{2-\frac{n}{2}}+u_{\omega+\vartheta_{\varepsilon, a,b,\lambda,\omega}}(r_\varepsilon\theta) + \overline{f}(r_\varepsilon\theta)$$
$$+ (\overline{f}u_{\omega+\vartheta_{\varepsilon, a,b,\Lambda,\omega}})(r_\varepsilon\theta) + (fV_{\Lambda,\omega+\vartheta_{\varepsilon, a,b,\Lambda,\omega}})(r_\varepsilon\theta) + O(|\Lambda| r_\varepsilon^{4-\frac{n}{2}}).$$
We now project the system (\ref{eq47}) on the set of functions spanned by the constant function. This yields the equations
\begin{equation}\label{eq03}
\left\{\begin{array}{rcl}
b+\left( \displaystyle\frac{\varepsilon^{\frac{n-4}{2}}}{4(1+b)}- \Lambda\right)r_\varepsilon^{2-\frac{n}{2}} & = & H_{0,\varepsilon}(a,b,\Lambda,\omega)\\
\left(2-\frac{n}{2}\right)\left( \displaystyle\frac{\varepsilon^\frac{n-4}{2}}{4(1+b)}- \Lambda\right)r_\varepsilon^{2-\frac{n}{2}} & = & r_\varepsilon\partial_rH_{0,\varepsilon}(a,b,\Lambda,\omega)
\end{array}\right.,
\end{equation}
where $H_{0,\varepsilon}$ and $\partial_r H_{0,\varepsilon}$ are continuous maps and satisfy
\begin{equation}\label{eq00}
H_{0,\varepsilon}(a,b,\Lambda,\omega)= O(r_\varepsilon^{2+l})\;\;\;\mbox{ and }\;\;\;r_\varepsilon\partial_r H_{0,\varepsilon}(a,b,\Lambda,\omega)= O(r_\varepsilon^{2+l}).
\end{equation}
\begin{lemma}\label{lem12}
There is a constant $\varepsilon_2>0$ such that if $\varepsilon\in(0,\varepsilon_2)$, $a\in\mathbb{R}^n$ with $|a|^2\leq r_\varepsilon^{l}$ and $\omega\in C^{2,\alpha}(\mathbb{S}^{n-1}_{r_\varepsilon})$ belongs to the space spanned by the coordinate functions and has norm bounded by $r_\varepsilon^{2+l-\delta_1}$, then the system (\ref{eq03}) has a solution $(b,\Lambda)\in\mathbb{R}^2$, with $|b|\leq 1/2$ and $|\Lambda|^2\leq r_\varepsilon^{n+l+\delta_5}$.
\end{lemma}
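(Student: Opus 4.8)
The plan is to recast the system (\ref{eq03}) as a fixed point problem on the compact convex rectangle
$$K_\varepsilon:=\left\{(b,\Lambda)\in\mathbb{R}^2;\ |b|\le\tfrac12,\ |\Lambda|^2\le r_\varepsilon^{n+l+\delta_5}\right\}$$
and to apply Brouwer's fixed point theorem. Solving the second equation of (\ref{eq03}) for the quantity $\big(\tfrac{\varepsilon^{(n-4)/2}}{4(1+b)}-\Lambda\big)r_\varepsilon^{2-n/2}$ and substituting the result into the first one, a short computation (using $\tfrac{2}{n-4}=-\tfrac{1}{2-n/2}$) shows that $(b,\Lambda)$ solves (\ref{eq03}) if and only if it is a fixed point of the continuous map $\mathcal T_\varepsilon:K_\varepsilon\to\mathbb{R}^2$,
$$\mathcal T_\varepsilon(b,\Lambda):=\left(H_{0,\varepsilon}(a,b,\Lambda,\omega)+\tfrac{2}{n-4}\,r_\varepsilon\partial_r H_{0,\varepsilon}(a,b,\Lambda,\omega),\ \frac{\varepsilon^{\frac{n-4}{2}}}{4(1+b)}+\tfrac{2}{n-4}\,r_\varepsilon^{\frac n2-1}\partial_r H_{0,\varepsilon}(a,b,\Lambda,\omega)\right).$$
Under the hypotheses imposed on $a$, $\omega$ and on $(b,\Lambda)\in K_\varepsilon$, the function $\vartheta_{\varepsilon,a,b,\Lambda,\omega}$ is the one produced by Lemma \ref{lem08}, so $H_{0,\varepsilon}$ and $r_\varepsilon\partial_r H_{0,\varepsilon}$ are well defined on $K_\varepsilon$ and, by the remark preceding the statement of the lemma, depend continuously on $(b,\Lambda)$; hence $\mathcal T_\varepsilon$ is continuous.

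The substantive point is to verify that $\mathcal T_\varepsilon(K_\varepsilon)\subseteq K_\varepsilon$ for $\varepsilon$ small. For the first component, (\ref{eq00}) gives $\big|H_{0,\varepsilon}+\tfrac{2}{n-4}r_\varepsilon\partial_r H_{0,\varepsilon}\big|\le C\,r_\varepsilon^{2+l}\le\tfrac12$ once $\varepsilon$ is small, since $2+l>0$ and $r_\varepsilon=\varepsilon^s\to0$. For the second component, using $|1+b|\ge\tfrac12$ on $K_\varepsilon$ together with $r_\varepsilon^{n/2-1}\partial_r H_{0,\varepsilon}=r_\varepsilon^{n/2-2}\big(r_\varepsilon\partial_r H_{0,\varepsilon}\big)$ and (\ref{eq00}) again,
$$\left|\frac{\varepsilon^{\frac{n-4}{2}}}{4(1+b)}+\tfrac{2}{n-4}\,r_\varepsilon^{\frac n2-1}\partial_r H_{0,\varepsilon}\right|\le\tfrac12\varepsilon^{\frac{n-4}{2}}+C\,r_\varepsilon^{\frac n2+l}.$$
Here is where the freedom in the gluing exponent $s$ is exploited: since $n\ge5$, one fixes $s$ small enough that $s(n+l+\delta_5)<n-4$, which forces $\varepsilon^{(n-4)/2}=o\big(r_\varepsilon^{(n+l+\delta_5)/2}\big)$; moreover $\tfrac n2+l>\tfrac{n+l+\delta_5}{2}$ because $l>\delta_5$, so $r_\varepsilon^{n/2+l}=o\big(r_\varepsilon^{(n+l+\delta_5)/2}\big)$ as well. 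Hence the right-hand side above is $\le r_\varepsilon^{(n+l+\delta_5)/2}$ for $\varepsilon$ small, and its square is $\le r_\varepsilon^{n+l+\delta_5}$. Thus $\mathcal T_\varepsilon$ maps $K_\varepsilon$ into itself.

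Since $K_\varepsilon$ is a nonempty compact convex subset of $\mathbb{R}^2$ and $\mathcal T_\varepsilon$ is continuous with $\mathcal T_\varepsilon(K_\varepsilon)\subseteq K_\varepsilon$, Brouwer's fixed point theorem yields a fixed point $(b,\Lambda)\in K_\varepsilon$, which is the desired solution of (\ref{eq03}); $\varepsilon_2$ is taken below all the smallness thresholds used above. I expect the \emph{main obstacle} to be entirely the bookkeeping in the second step: one must be sure that the ``Delaunay unbalancing'' size $\varepsilon^{(n-4)/2}$ dictated by the neck is dominated by $r_\varepsilon^{(n+l+\delta_5)/2}$, which is precisely what choosing a small gluing exponent $s$ in $r_\varepsilon=\varepsilon^s$ guarantees, while all remaining terms are of strictly lower order by (\ref{eq00}) and the standing inequality $l>\delta_5$. (A contraction-mapping argument would instead require Lipschitz control of $H_{0,\varepsilon}$ in $(b,\Lambda)$, obtainable from (\ref{eq82}), (\ref{eq86}) and the estimates of Lemma \ref{lem08}, but the Brouwer route above only uses continuity and is cleaner.)
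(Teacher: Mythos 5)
Your proposal is correct and follows essentially the same route as the paper: the author reformulates (\ref{eq03}) as a fixed point problem for exactly the same map (called $\mathcal{G}_{\varepsilon,a,\omega}$ there) on the same rectangle $\{|b|\le 1/2,\ |\Lambda|\le r_\varepsilon^{(n+l+\delta_5)/2}\}$ and verifies the same self-mapping estimates using (\ref{eq00}) and the smallness of $s$. The only difference is the concluding step: the paper asserts the map is a contraction, which also gives continuous dependence of $(b,\Lambda)$ on $\varepsilon$, $a$, $\omega$ (used when solving (\ref{eq101}) in Lemma \ref{lem13}), whereas your Brouwer argument yields existence only --- sufficient for the lemma as stated, as you yourself note.
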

\begin{proof} Define a continuous map $\mathcal{G}_{\varepsilon,a,\omega}: \mathcal{D}_{0,\varepsilon}\rightarrow \mathbb{R}^2$ by
$$\begin{array}{rcl}
\mathcal{G}_{\varepsilon,a,\omega}(b,\Lambda) & := & \displaystyle \left(\frac{2r_\varepsilon}{n-4}\partial_rH_{0,\varepsilon} (a,b,\Lambda,\omega)+H_{0,\varepsilon} (a,b,\Lambda,\omega),\right.\\
\\
& & \displaystyle \left.\frac{\varepsilon^{\frac{n-4}{2}}}{4(1+b)}+\frac{2r_\varepsilon^{\frac{n}{2}-1}}{n-4} \partial_rH_{0,\varepsilon} (a,b,\Lambda,\omega)\right),
\end{array}$$
where $\mathcal{D}_{0,\varepsilon}:= \{(b,\Lambda)\in\mathbb{R}^2; |b|\leq 1/2\mbox{ and } |\Lambda|\leq r_\varepsilon^{\frac{n+l+\delta_5}{2}}\}$.

Then, using (\ref{eq00}) and $r_\varepsilon=\varepsilon^s$, with $s\in(0,1)$ small enough, we can show that
$\mathcal{G}_{\varepsilon,a,\omega}(\mathcal{D}_{0,\varepsilon})\subset \mathcal{D}_{0,\varepsilon},$ for small enough $\varepsilon>0$. With the estimate that we have above we can show that this map is a contraction, and hence it has a fixed point which depends continuously on parameter $\varepsilon$, $a$ and $\omega$. Obviously, this fixed point is a solution of the system (\ref{eq03}).
\end{proof}

From now on we will work with the fixed point given by Lemma \ref{lem12} and we will write simply as $(b,\Lambda)$.

Finally, we project the system (\ref{eq47}) over the space of functions spanned by the coordinate functions. It will be convenient to decompose $\omega$ in
\begin{equation}\label{eq102}
\omega=\sum_{i=1}^n\omega_ie_i,\;\;\;\mbox{ where }\;\;\;\omega_i=\int_{\mathbb{S}^{n-1}}\omega(r_\varepsilon\cdot)e_i.
\end{equation}

Hence, $|\omega_i|\leq c_n\sup_{\mathbb{S}^{n-1}_{r_\varepsilon}}|\omega|.$ From this and Proposition \ref{propo002} we get the system
\begin{equation}\label{eq101}
\left\{\begin{array}{rcl}
F(r_\varepsilon)r_\varepsilon a_i-\omega_i & = & H_{i,\varepsilon}(a,\omega)\\
G(r_\varepsilon)r_\varepsilon a_i-(1-n)\omega_i & = & r_\varepsilon\partial_r H_{i,\varepsilon}(a,\omega),
\end{array}\right.
\end{equation}
$i=1,\ldots,n$, where
$$\begin{array}{rcl}
  F(r_\varepsilon) & := & \displaystyle\frac{n-4}{2}
u_{\varepsilon,R}(r_\varepsilon\theta)+ r_\varepsilon\partial_r u_{\varepsilon,R}(r_\varepsilon\theta),\\
\\
G(r_\varepsilon) & := & \displaystyle\frac{n-4}{2}u_{\varepsilon,R}(r_\varepsilon\theta)+ \frac{n}{2} r_\varepsilon\partial_r u_{\varepsilon,R}(r_\varepsilon\theta) +r_\varepsilon^2\partial_r^2 u_{\varepsilon,R}(r_\varepsilon\theta),
  \end{array}
$$
\begin{equation}\label{eq103}
H_{i,\varepsilon}(a,\omega)= O(r_\varepsilon^{2+l})
\;\;\;\mbox{ and }\;\;\;r_\varepsilon\partial_rH_{i,\varepsilon}(a,\omega)= O(r_\varepsilon^{2+l}).
\end{equation}
The maps $H_{i,\varepsilon}$ and $\partial_rH_{i,\varepsilon}$ are continuous.

\begin{lemma}\label{lem13}
There is a constant $\varepsilon_2>0$ such that if $\varepsilon\in(0,\varepsilon_2)$ then the system (\ref{eq101}) has a solution $(a,\omega)\in\mathbb{R}^n\times C^{2,\alpha}(\mathbb{S}^{n-1}_{r_\varepsilon})$ with $|a|^2\leq r_\varepsilon^{l}$ and $\omega$ given by (\ref{eq102}) of norm bounded by $r_{\varepsilon}^{2+l-\delta_1}$.
\end{lemma}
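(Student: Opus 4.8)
The plan is to solve the $2n$ equations in (\ref{eq101}) for the $2n$ unknowns $(a_i,\omega_i)$, $i=1,\dots,n$, by a fixed-point argument, treating $H_{i,\varepsilon}$ and $r_\varepsilon\partial_r H_{i,\varepsilon}$ as lower-order perturbations of the linear system whose coefficient matrix is built from $F(r_\varepsilon)$ and $G(r_\varepsilon)$. First I would compute the leading behaviour of $F(r_\varepsilon)$ and $G(r_\varepsilon)$ using Corollary \ref{cor04}: with $R^{(4-n)/4}=2(1+b)\varepsilon^{(4-n)/4}$ and $r_\varepsilon=\varepsilon^s$ for $s$ small (so $R<r_\varepsilon$), one has $u_{\varepsilon,R}(r_\varepsilon\theta)\sim\tfrac12\varepsilon^{(n-4)/4}R^{(4-n)/4}=1+b$, $r_\varepsilon\partial_r u_{\varepsilon,R}(r_\varepsilon\theta)=O(\varepsilon^{(n-4)/4}R^{(n-4)/4}r_\varepsilon^{(4-n)/2})=O(\varepsilon^{(n-4)s/2}\cdot(\text{stuff}))$, which is of a strictly higher order than the constant $1+b$; similarly $r_\varepsilon^2\partial_r^2 u_{\varepsilon,R}(r_\varepsilon\theta)$ is of the same higher order. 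Hence
\[
F(r_\varepsilon)=\frac{n-4}{2}(1+b)+o(1),\qquad G(r_\varepsilon)=\frac{n-4}{2}(1+b)+o(1)
\]
as $\varepsilon\to0$, so to leading order the two equations in (\ref{eq101}), for each fixed $i$, read $F r_\varepsilon a_i-\omega_i=\text{small}$ and $G r_\varepsilon a_i-(1-n)\omega_i=\text{small}$; subtracting, $(n-1)\omega_i=(F-G)r_\varepsilon a_i + \text{small} = o(r_\varepsilon a_i)+\text{small}$, and then $F r_\varepsilon a_i=\omega_i+\text{small}$. The point is that the $2\times2$ matrix
\[
\begin{pmatrix} F(r_\varepsilon)r_\varepsilon & -1\\ G(r_\varepsilon)r_\varepsilon & -(1-n)\end{pmatrix}
\]
has determinant $r_\varepsilon\big((1-n)F(r_\varepsilon)-G(r_\varepsilon)\big)$, which (since $G-F$ is $o(1)$ while $F\to\frac{n-4}{2}(1+b)\neq0$) is comparable to $-\frac{n(n-4)}{2}(1+b)r_\varepsilon$, hence invertible with inverse of norm $O(r_\varepsilon^{-1})$.

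Next I would recast (\ref{eq101}) as a fixed-point problem. Inverting the linear part, the system becomes
\[
(a,\omega)=\mathcal{T}_\varepsilon(a,\omega):=\text{(inverse of linear part)}\cdot\big(H_{i,\varepsilon}(a,\omega),\,r_\varepsilon\partial_r H_{i,\varepsilon}(a,\omega)\big)_{i=1,\dots,n},
\]
where we must be careful about which norm measures $\omega$: $\omega=\sum\omega_i e_i\in C^{2,\alpha}(\mathbb{S}^{n-1}_{r_\varepsilon})$ with $\|\omega\|_{(2,\alpha),r_\varepsilon}$ comparable (up to dimensional constants, since $e_i$ are fixed smooth functions) to $\max_i|\omega_i|$. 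Using the bounds (\ref{eq103}), namely $H_{i,\varepsilon},\,r_\varepsilon\partial_r H_{i,\varepsilon}=O(r_\varepsilon^{2+l})$, and the $O(r_\varepsilon^{-1})$ bound on the inverse matrix, I get $|a_i|=O(r_\varepsilon^{1+l})$ and $|\omega_i|=O(r_\varepsilon^{2+l})$ for the image point. In particular $|a|^2=O(r_\varepsilon^{2+2l})\leq r_\varepsilon^{l}$ and $\|\omega\|_{(2,\alpha),r_\varepsilon}=O(r_\varepsilon^{2+l})\leq r_\varepsilon^{2+l-\delta_1}$ for $\varepsilon$ small, so $\mathcal{T}_\varepsilon$ maps the stated ball
\[
\mathcal{B}_\varepsilon:=\{(a,\omega):|a|^2\leq r_\varepsilon^{l},\ \|\omega\|_{(2,\alpha),r_\varepsilon}\leq r_\varepsilon^{2+l-\delta_1}\}
\]
into itself. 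I would then verify $\mathcal{T}_\varepsilon$ is a contraction: this rests on the fact that $H_{i,\varepsilon}$, $r_\varepsilon\partial_r H_{i,\varepsilon}$ are assembled from the differences of $\mathcal A_\varepsilon$ and $\mathcal B_{r_\varepsilon}$ at $\partial B_{r_\varepsilon}$ and their $r$-derivatives — via (\ref{eq66}), the construction of $\phi_\vartheta$, the solutions $U_{\varepsilon,R,a,\phi}$ and $V_{\Lambda,\varphi}$, and the fixed point $\vartheta_{\varepsilon,a,b,\Lambda,\omega}$ of Lemma \ref{lem08}; all of these depend continuously (indeed Lipschitz with a small constant, by the contraction estimates (\ref{eq82}), (\ref{eq86}), (\ref{eq98})) on $a$ and $\omega$. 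Composing, $H_{i,\varepsilon}$ and $r_\varepsilon\partial_r H_{i,\varepsilon}$ are Lipschitz in $(a,\omega)$ with constant $O(r_\varepsilon^{\delta})$ for some $\delta>0$, so after multiplying by the $O(r_\varepsilon^{-1})$ inverse we still gain a positive power of $r_\varepsilon$ (choosing $s$ and the $\delta_j$ appropriately, as the hypotheses $l>\delta_5$, $\delta_4<l$, etc., permit), making $\mathcal{T}_\varepsilon$ a contraction on $\mathcal{B}_\varepsilon$ for $\varepsilon$ small.

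The Banach fixed-point theorem then yields a unique $(a,\omega)\in\mathcal{B}_\varepsilon$ solving (\ref{eq101}), which is exactly the assertion (the bound $|a|^2\leq r_\varepsilon^l$ is the crude consequence of the sharper $|a|=O(r_\varepsilon^{1+l})$, and likewise for $\omega$). The main obstacle I anticipate is bookkeeping the dependence of the error terms $H_{i,\varepsilon}$, $r_\varepsilon\partial_r H_{i,\varepsilon}$ on $(a,\omega)$ carefully enough to extract a genuine power-of-$r_\varepsilon$ gain after dividing by the determinant $\sim r_\varepsilon$: one must check that the derivative term $r_\varepsilon\partial_r H_{i,\varepsilon}$ — not just $H_{i,\varepsilon}$ — enjoys the same $O(r_\varepsilon^{2+l})$ smallness and Lipschitz control, which requires that the boundary-derivative estimates from Theorems \ref{teo01} and \ref{teo02} (the $C^{1,\alpha}$-type bounds on $r_\varepsilon\partial_r$ of the correction terms, and on $r_\varepsilon\partial_r v_\phi$ via (\ref{eq73}), and on $r_\varepsilon\partial_r\pi''_{r_\varepsilon}u_{\varepsilon,R,a}$ via (\ref{eq92})) all feed through with the right powers. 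Also one should double check that the leading coefficient $F(r_\varepsilon)\to\frac{n-4}{2}(1+b)$ stays bounded away from zero uniformly in the admissible range $|b|\le1/2$, which it does since $n\ge5$; this is what keeps the linear part uniformly invertible as $\varepsilon\to0$.
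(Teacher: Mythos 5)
Your reduction of (\ref{eq101}) to a fixed-point problem — inverting the $2\times 2$ linear part with determinant comparable to $r_\varepsilon$ times a nonzero constant, and then using the bounds (\ref{eq103}) to check that the resulting map sends the ball $\{|a|^2\leq r_\varepsilon^{l},\ \|\omega\|_{(2,\alpha),r_\varepsilon}\leq r_\varepsilon^{2+l-\delta_1}\}$ into itself — is exactly the structure of the paper's argument (the paper's map $\mathcal{K}_{i,\varepsilon}$ is literally your inverted system, with $G+(n-1)F$ playing the role of the determinant divided by $r_\varepsilon$; your leading-order value $F(r_\varepsilon)\to\frac{n-4}{2}(1+b)$ is the one consistent with the definitions here, and the sign of the determinant is immaterial). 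The genuine gap is in your concluding step: you invoke the Banach fixed-point theorem, and for that you need $H_{i,\varepsilon}$ and $r_\varepsilon\partial_r H_{i,\varepsilon}$ to be Lipschitz in $(a,\omega)$ with a constant that gains a positive power of $r_\varepsilon$ after dividing by the determinant. None of the estimates you cite provide this: (\ref{eq82}), (\ref{eq86}) and (\ref{eq98}) are Lipschitz bounds in $\phi$, $\varphi$ and $\vartheta$ respectively, not in $a$, $\Lambda$ or $\omega$, and the dependence of $U_{\varepsilon,R,a,\phi}$ on $a$, of $(b,\Lambda)$ from Lemma \ref{lem12} on $(a,\omega)$, and of the fixed point $\vartheta_{\varepsilon,a,b,\Lambda,\omega}$ of Lemma \ref{lem08} on $(a,\omega)$ is only asserted (and only needed) to be \emph{continuous}. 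Establishing the Lipschitz control you assume would require rerunning the interior and exterior analyses with differentiated (or Lipschitz-in-parameter) estimates, which is substantial extra work and is nowhere in the paper.

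The paper sidesteps this entirely: since the unknowns $(a_i,\omega_i)$ range over a compact convex subset $\mathcal{D}_{i,\varepsilon}\subset\mathbb{R}^2$ and the maps $H_{i,\varepsilon}$, $r_\varepsilon\partial_r H_{i,\varepsilon}$ are continuous, the self-mapping property you already verified suffices to apply \emph{Brouwer's} fixed-point theorem, with no contraction (and hence no uniqueness claim, which the lemma does not assert anyway). So your proof becomes correct if you simply replace the Banach step by Brouwer on the finite-dimensional ball; as written, the contraction claim is unsupported.
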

\begin{proof}
Define a continuous map $\mathcal{K}_{i,\varepsilon}:\mathcal{D}_{i,\varepsilon}\rightarrow \mathbb{R}^2$ by
$$\mathcal{K}_{i,\varepsilon}(a_i,\omega_i) := \left( (G(r_\varepsilon)+ (n-1)F(r_\varepsilon))^{-1}r_\varepsilon^{-1} (r_\varepsilon\partial_rH_{i,\varepsilon}(a,\omega)+ (n-1)H_{i,\varepsilon}),\right.$$
$$
\hspace{1,5cm}\left. (G(r_\varepsilon)+ (n-1)F(r_\varepsilon))^{-1}F(r_\varepsilon) (r_\varepsilon\partial_rH_{i,\varepsilon}(a,\omega) +(n-1)H_{i,\varepsilon})-H_{i,\varepsilon}\right),$$
where $\mathcal{D}_{i,\varepsilon}:= \{(a_i,\omega_i)\in\mathbb{R}^2;|a_i|^2\leq n^{-1}r_\varepsilon^{l}\mbox{ and }|\omega_i|\leq n^{-1}k_{i,n}^{-1}r_\varepsilon^{2+l-\delta_1}\}$, $k_{i,n}=\|e_i\|_{(2,\alpha),1}$, $F(r_\varepsilon) = (n-2)(1+b)+ O(\varepsilon^{2-s(n-2)})$ and $G(r_\varepsilon)+ (n-1)F(r_\varepsilon)= n(n-2)(1+b)+ O(\varepsilon^{2-s(n-2)})$ with $2-s(n-2)>0$. 

From (\ref{eq103}) we obtain that $\mathcal{K}_{i,\varepsilon}(\mathcal{D}_{i,\varepsilon})\subset \mathcal{D}_{i,\varepsilon},$ for small enough $\varepsilon>0$. By the Brouwer's fixed point theorem there exists a fixed point of the map $\mathcal{K}_{i,\varepsilon}$ and this fixed point is a solution of the system (\ref{eq101}).
\end{proof}

Now we are ready to prove the main theorem of this paper.
\begin{theorem}
 Let $(M^n,g_0)$ be an compact Riemannian manifold nondegenerate with dimension $n\geq 5$, $g_0$ conformal to some $2-$admissible metric and the $\sigma_2-$curvature is equal to $\frac{n(n-1)}{8}$. Let ${\{p_1,\ldots,p_m\}}$ a set of points in $M$ such that $\nabla_{g_0}^jW_{g_0}(p_i)=0$ for $j=0,\ldots,\left[\frac{n-4}{2}\right]$ and $i=1,\ldots,m$, where $W_{g_0}$ is the Weyl tensor of the metric $g_0$. Then, there exist a constant $\varepsilon_0>0$ and a one-parameter family of complete metrics $g_\varepsilon$ on $M\backslash\{p_1,\ldots,p_m\}$ defined for $\varepsilon\in(0,\varepsilon_0)$ such that
 \begin{enumerate}
  \item each $g_\varepsilon$ is conformal to $g_0$ and has constant $\sigma_2-$curvature $\sigma_2(A_{g_\varepsilon})=\frac{n(n-1)}{8}$;
  \item $g_\varepsilon$ is asymptotically Delaunay near each point $p_i$, for all $i=1,\ldots,m$;
  \item $g_\varepsilon\rightarrow g_0$ uniformly on compact sets in $M\backslash\{p_1,\ldots,p_m\}$ as $\varepsilon\rightarrow 0$.
 \end{enumerate}
\end{theorem}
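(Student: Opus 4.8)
The plan is to assemble the pieces from Sections \ref{sec02} and \ref{sec04} along the gluing scheme already set up in Section \ref{sec05}. First I would treat the case $m=1$, which contains all the analytic content. The interior piece gives, via Theorem \ref{teo01}, a complete constant $\sigma_2$-curvature metric $\hat g = \mathcal{A}_\varepsilon(R,a,\phi)^{8/(n-4)}g$ on $\overline{B_{r_\varepsilon}(p)}\backslash\{p\}$ with a Delaunay-type end, depending on the parameters $R$ (equivalently $b$), $a\in\mathbb{R}^n$, and the high-frequency boundary datum $\phi$; the exterior piece gives, via Theorem \ref{teo02}, a constant $\sigma_2$-curvature metric $\tilde g = \mathcal{B}_{r_\varepsilon}(\Lambda,\varphi)^{8/(n-4)}g$ on $M_{r_\varepsilon}$ depending on $\Lambda\in\mathbb{R}$ and the boundary datum $\varphi$, which we split as $\varphi = \omega + \vartheta$ with $\omega$ in the span of the coordinate functions and $\vartheta$ high-frequency. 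The metrics $\hat g$ and $\tilde g$ glue to a $C^2$ (hence, by elliptic regularity using the $2$-admissibility, smooth) metric $g_\varepsilon$ on $M\backslash\{p\}$ precisely when the Cauchy data match on $\partial B_{r_\varepsilon}(p)$, i.e. when the system (\ref{eq47}) holds.

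The matching system (\ref{eq47}) is solved by decomposing each equation into three Fourier pieces on $\mathbb{S}^{n-1}_{r_\varepsilon}$: the high frequencies, the constants, and the coordinate-function modes. For the high-frequency part, I would first choose $\phi = \phi_\vartheta$ by the fixed-point formula (\ref{eq66}) so that the $\mathcal{A}_\varepsilon = \mathcal{B}_{r_\varepsilon}$ equation is automatically satisfied on the high-frequency space; then the derivative-matching equation becomes (\ref{eq89}), which Lemma \ref{lem08} solves for $\vartheta = \vartheta_{\varepsilon,a,b,\Lambda,\omega}\in\mathcal{D}_\varepsilon$ as a fixed point of the contraction $\mathcal{H}_\varepsilon$, using the isomorphism $\mathcal{Z}$ and the decay estimates on $\mathcal{S}_\varepsilon$. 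Next, the projection onto constants yields the $2\times 2$ system (\ref{eq03}); Lemma \ref{lem12} produces $(b,\Lambda)$ with $|b|\le 1/2$, $|\Lambda|^2\le r_\varepsilon^{n+l+\delta_5}$ via the contraction $\mathcal{G}_{\varepsilon,a,\omega}$, exploiting the leading behaviour $u_{\varepsilon,R}(r_\varepsilon\theta)\approx 1+b$ and the smallness of $H_{0,\varepsilon}$. Finally, the projection onto the coordinate-function modes yields the system (\ref{eq101}), and Lemma \ref{lem13} produces $(a,\omega)$ with $|a|^2\le r_\varepsilon^l$ and $\|\omega\|\le r_\varepsilon^{2+l-\delta_1}$ by Brouwer's fixed point theorem, where the invertibility of the relevant $2\times 2$ matrix follows from $F(r_\varepsilon)\approx (n-2)(1+b)$ and $G(r_\varepsilon)+(n-1)F(r_\varepsilon)\approx n(n-2)(1+b)$. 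Since all four fixed-point problems are mutually consistent (the parameters are fed in the order $\omega,a \to b,\Lambda \to \vartheta \to \phi_\vartheta$, and each is solved on a domain stable under the previous choices), a diagonal/continuity argument closes the loop and produces, for every $\varepsilon\in(0,\varepsilon_0)$, a genuine solution of (\ref{eq47}).

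Having solved (\ref{eq47}), I would record the three asserted properties. Conformality to $g_0$ and the value $\sigma_2(A_{g_\varepsilon}) = n(n-1)/8$ are built into the construction of $\hat g$ and $\tilde g$ and survive the gluing because the matched metric solves $H_{g_0}(u_\varepsilon) = 0$ across $\partial B_{r_\varepsilon}(p)$; here the $2$-admissibility of (a metric conformal to) $g_0$ is what makes the fully nonlinear equation elliptic along the solution, so that $C^2$ matching upgrades to smoothness. The asymptotically-Delaunay statement near $p$ follows from $\mathcal{A}_\varepsilon(R,a,\phi) = u_{\varepsilon,R,a} + O(\text{higher order})$ together with the estimates (\ref{eq71}) on $U_{\varepsilon,R,a,\phi}$ and the bounds on $h$ and $r_\varepsilon^{-\overline\gamma}|x|^{\overline\gamma}v_\phi$. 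For the convergence $g_\varepsilon\to g_0$ uniformly on compacta of $M\backslash\{p\}$, note that on any fixed compact set $K\subset M_{r_1/2}$ we have $g_\varepsilon = \mathcal{B}_{r_\varepsilon}(\Lambda,\varphi)^{8/(n-4)}g_0$ with $\mathcal{B}_{r_\varepsilon}(\Lambda,\varphi) = f + \Lambda fG_p + fu_\varphi + fV_{\Lambda,\varphi}$, and as $\varepsilon\to 0$ we have $r_\varepsilon = \varepsilon^s\to 0$, $|\Lambda|\to 0$, $\|u_\varphi\|\to 0$, $\|V_{\Lambda,\varphi}\|_{C^{2,\alpha}_\nu(M_{r_\varepsilon})}\le \gamma r_\varepsilon^{2+l-\nu}\to 0$ on $K$, and $f\to 1$ since $f$ is a fixed function with $f = 1/\mathcal{F}$ — but $\mathcal{F}$ itself depends only on the conformal normal coordinate construction, not on $\varepsilon$, so actually $g_\varepsilon = \mathcal{B}_{r_\varepsilon}(\Lambda,\varphi)^{8/(n-4)} g_0 \to f^{8/(n-4)} g_0$; one must therefore observe that taking conformal normal coordinates at $p$ changes $g_0$ only by a conformal factor that is irrelevant for the limiting statement, and conclude $g_\varepsilon\to g_0$ on $K$. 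The main obstacle is the mutual consistency of the nested fixed-point problems: one must verify that the parameter ranges ($|a|^2\le r_\varepsilon^l$, $|b|\le 1/2$, $|\Lambda|^2\le r_\varepsilon^{n+l+\delta_5}$, $\|\phi\|,\|\omega\|,\|\vartheta\|$ all $\lesssim r_\varepsilon^{2+l-\delta_1}$) are simultaneously preserved with the exponent inequalities $3\delta_2 > \max\{\delta_1,l\}$, $l > \max\{\delta_4,\delta_5,2\delta_4\}$, $\overline\gamma = n/4+1+\varepsilon_1$, $\gamma = \delta_{n,2}+1-n/4-\varepsilon_1$, and that the error terms $H_{0,\varepsilon}, H_{i,\varepsilon}, \mathcal{S}_\varepsilon$ genuinely have the claimed decay $O(r_\varepsilon^{2+l})$ — this is where the hypothesis $\nabla^j W_{g_0}(p) = 0$ for $j\le [(n-4)/2]$, hence $d = [n/2]$ with $g = \delta + O(|x|^{d+1})$, is used decisively, since $d > n/4$ is exactly what makes $H_\delta(u_{\varepsilon,R,a}) - H_g(u_{\varepsilon,R,a}) = O(|x|^{d+1-n})$ decay fast enough to be absorbed.

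For general $m$, the same construction is carried out independently near each $p_i$ — the interior analysis of Section \ref{sec02} is purely local and the exterior right inverse of Theorem \ref{teo03} extends verbatim to $M\backslash\bigcup_i B_{r_\varepsilon}(p_i)$ since $L_{g_0}^1$ is surjective on all of $M$ and the local perturbation arguments localize near each puncture — so the gluing system becomes a disjoint union of $m$ copies of the system above, with nondegeneracy of $g_0$ again furnishing the single global right inverse. Solving each block as before and matching across each $\partial B_{r_\varepsilon}(p_i)$ yields the family $g_\varepsilon$ on $M\backslash\{p_1,\dots,p_m\}$ with the three stated properties, which completes the proof.
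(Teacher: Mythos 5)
Your proposal follows essentially the same route as the paper: you solve the matching system (\ref{eq47}) mode-by-mode via Lemmas \ref{lem08}, \ref{lem12} and \ref{lem13}, upgrade the glued conformal factor to a smooth solution by elliptic regularity using the $2$-admissibility, obtain completeness and the Delaunay asymptotics from the interior estimates, and treat $m>1$ exactly as the paper indicates. The only point to tidy is the convergence step: $\mathcal{B}_{r_\varepsilon}(\Lambda,\varphi)=f(1+\Lambda G_p+u_\varphi+V_{\Lambda,\varphi})$ is the conformal factor relative to $g$ (not $g_0$), so on compact sets $g_\varepsilon=\mathcal{B}_{r_\varepsilon}(\Lambda,\varphi)^{8/(n-4)}g\rightarrow f^{8/(n-4)}g=g_0$ exactly, and there is no need to argue that the conformal normal coordinate factor is ``irrelevant''.
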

\begin{proof} First we proof the theorem when $m=1$ and then we will explain the minor changes that need to be made in order to deal with more than one singular point.

We keep the previous notations.

From Lemmas \ref{lem08}, \ref{lem12} and \ref{lem13} we conclude that there is $\varepsilon_0>0$ such that for all $\varepsilon\in(0,\varepsilon_0)$ there are parameters $R_\varepsilon$, $a_\varepsilon$, $\phi_\varepsilon$, $\Lambda_\varepsilon$ and $\varphi_\varepsilon$ for which the functions $\mathcal{A}_\varepsilon (R_\varepsilon,a_\varepsilon,\phi_\varepsilon)$ and $\mathcal{B}_{r_\varepsilon}(\Lambda_\varepsilon,\varphi_\varepsilon)$ coincide up to order one in $\partial B_{r_\varepsilon}(p)$. Since $g_0$ is conformal to some $2-$admissible metric, we can use elliptic regularity to show that the function $\mathcal{U}_\varepsilon$ defined by $\mathcal{U}_\varepsilon:=\mathcal{A}_\varepsilon (R_\varepsilon,a_\varepsilon,\phi_\varepsilon)$ in $B_{r_\varepsilon}(p)\backslash\{p\}$ and $\mathcal{U}_\varepsilon:= \mathcal{B}_{r_\varepsilon}(\Lambda_\varepsilon,\varphi_\varepsilon)$ in $M\backslash B_{r_\varepsilon}(p)$ is a positive smooth function in $M\backslash\{p\}$. Moreover, since $\mathcal{A}_\varepsilon (R_\varepsilon,a_\varepsilon,\phi_\varepsilon)\geq c|x|^{\frac{4-n}{4}}$, for some constant $c>0$, then the function $\mathcal{U}_\varepsilon$ tends to infinity on approach to $p$ with sufficiently fast rate.

Therefore, $g_\varepsilon:=\mathcal{U}_\varepsilon^{\frac{8}{n-4}}g$ is a one-parameter family of complete smooth metric defined in $M\backslash\{p\}$ and by Theorem \ref{teo01} and \ref{teo02} it satisfies i), ii) and iii).

To proof the general case, we will just explain the minor changes that need to be made. We direct the reader to \cite{MR2639545} for more details.

The interior analysis is done around at each point $p_i$, where we can find a family of metrics defined in $B_{r_{\varepsilon_i}}(p)\backslash\{p\}$, with $\varepsilon_i=t_i\varepsilon$, $\varepsilon>0$, $t_i\in(\delta,\delta^{-1})$ and $\delta>0$ fixed, $i=1,\ldots,m$.

The exterior analysis is done considering some changes. First we consider conformal normal coordinates around at point $p_i$. Using this we define the spaces $C^{l,\alpha}_\nu(M\backslash\{p_1,\ldots,p_m\})$ and $C^{2,\alpha}_\nu(M_r)$ like in the Section 6 in \cite{MR2639545}. For each $\varphi=(\varphi_1,\ldots,\varphi_m)$ with $\varphi_i\in C^{2,\alpha}(\mathbb S_r^{n-1})$ a function $L^2-$orthogonal to the constant functions, we define $u_\varphi\in C^{2,\alpha}_\nu(M_r)$ such that near each point $p_i$, the function $u_\varphi$ is like in the Section \ref{sec10}. Then we proof an analog of Theorem \ref{teo03} in this context, with $w\in\mathbb R$ constant on any component of $\partial M_r$. 

Therefore, using again that $g_0$ is conformal to some $2-$admissible metric, we can use elliptic regularity to get the result.
\end{proof}

    \begin{bibdiv}%[Optional title]
%  {Optional introductory text}
  \begin{biblist}
\bib{MR0431287}{article}{
   author={Aubin, T.},
   title={\'Equations diff\'erentielles non lin\'eaires et probl\`eme de
   Yamabe concernant la courbure scalaire},
   journal={J. Math. Pures Appl. (9)},
   volume={55},
   date={1976},
   number={3},
   pages={269--296},
   issn={0021-7824},
   review={\MR{0431287 (55 \#4288)}},
} 

\bib{BPS}{article}{
   author={Bettiol, R. G.},
   author={Piccione, P.},
   author={Santoro, Bianca},
   title={Bifurcation of periodic solutions to the singular Yamabe problem on spheres},
   journal={To appear in J. Differential Geom.},
}

\bib{MR2389992}{article}{
   author={Branson, T. P.},
   author={Gover, A. R.},
   title={Variational status of a class of fully nonlinear curvature
   prescription problems},
   journal={Calc. Var. Partial Differential Equations},
   volume={32},
   date={2008},
   number={2},
   pages={253--262},
   issn={0944-2669},
   review={\MR{2389992 (2008m:53080)}},
   doi={10.1007/s00526-007-0141-6},
}

\bib{MR982351}{article}{
   author={Caffarelli, L. A.},
   author={Gidas, B.},
   author={Spruck, J.},
   title={Asymptotic symmetry and local behavior of semilinear elliptic
   equations with critical Sobolev growth},
   journal={Comm. Pure Appl. Math.},
   volume={42},
   date={1989},
   number={3},
   pages={271--297},
   issn={0010-3640},
   review={\MR{982351 (90c:35075)}},
   doi={10.1002/cpa.3160420304},
}

\bib{MR3023858}{article}{
   author={Catino, G.},
   author={Mazzieri, L.},
   title={Connected sum construction for $\sigma\sb k$-Yamabe metrics},
   journal={J. Geom. Anal.},
   volume={23},
   date={2013},
   number={2},
   pages={812--854},
   issn={1050-6926},
   review={\MR{3023858}},
   doi={10.1007/s12220-011-9265-1},
}

\bib{MR1945280}{article}{
   author={Chang, S.-Y. A.},
   author={Gursky, M. J.},
   author={Yang, P.},
   title={An a priori estimate for a fully nonlinear equation on
   four-manifolds},
   note={Dedicated to the memory of Thomas H.\ Wolff},
   journal={J. Anal. Math.},
   volume={87},
   date={2002},
   pages={151--186},
   issn={0021-7670},
   review={\MR{1945280 (2003k:53036)}},
   doi={10.1007/BF02868472},
}

\bib{MR1923964}{article}{
   author={Chang, S.-Y. A.},
   author={Gursky, M. J.},
   author={Yang, P. C.},
   title={An equation of Monge-Amp\`ere type in conformal geometry, and
   four-manifolds of positive Ricci curvature},
   journal={Ann. of Math. (2)},
   volume={155},
   date={2002},
   number={3},
   pages={709--787},
   issn={0003-486X},
   review={\MR{1923964 (2003j:53048)}},
   doi={10.2307/3062131},
}

\bib{MR2165306}{article}{
   author={Chang, S.-Y. A.},
   author={Han, Z.-C.},
   author={Yang, P.},
   title={Classification of singular radial solutions to the $\sigma\sb k$
   Yamabe equation on annular domains},
   journal={J. Differential Equations},
   volume={216},
   date={2005},
   number={2},
   pages={482--501},
   issn={0022-0396},
   review={\MR{2165306 (2006d:53033)}},
   doi={10.1016/j.jde.2005.05.005},
}

\bib{MR2040327}{article}{
   author={Chang, S.-Y. A.},
   author={Hang, F.},
   author={Yang, P. C.},
   title={On a class of locally conformally flat manifolds},
   journal={Int. Math. Res. Not.},
   date={2004},
   number={4},
   pages={185--209},
   issn={1073-7928},
   review={\MR{2040327 (2005d:53051)}},
   doi={10.1155/S1073792804132133},
}

\bib{MR2274812}{book}{
   author={Chow, B.},
   author={Lu, P.},
   author={Ni, L.},
   title={Hamilton's Ricci flow},
   series={Graduate Studies in Mathematics},
   volume={77},
   publisher={American Mathematical Society, Providence, RI; Science Press,
   New York},
   date={2006},
   pages={xxxvi+608},
   isbn={978-0-8218-4231-7},
   isbn={0-8218-4231-5},
   review={\MR{2274812 (2008a:53068)}},
   doi={10.1090/gsm/077},
}

\bib{MR2169873}{article}{
   author={Gonz{\'a}lez, M. d. M.},
   title={Singular sets of a class of locally conformally flat manifolds},
   journal={Duke Math. J.},
   volume={129},
   date={2005},
   number={3},
   pages={551--572},
   issn={0012-7094},
   review={\MR{2169873 (2006d:53034)}},
   doi={10.1215/S0012-7094-05-12934-9},
}

\bib{MR3237067}{article}{
   author={Gonzalez, M. d. M.},
   author={Mazzieri, L.},
   title={Singularities for a fully non-linear elliptic equation in
   conformal geometry},
   journal={Bull. Inst. Math. Acad. Sin. (N.S.)},
   volume={9},
   date={2014},
   number={2},
   pages={223--244},
   issn={2304-7909},
   review={\MR{3237067}},
}

\bib{MR1938739}{article}{
   author={Guan, P.},
   author={Viaclovsky, J.},
   author={Wang, G.},
   title={Some properties of the Schouten tensor and applications to
   conformal geometry},
   journal={Trans. Amer. Math. Soc.},
   volume={355},
   date={2003},
   number={3},
   pages={925--933 (electronic)},
   issn={0002-9947},
   review={\MR{1938739 (2003h:53054)}},
   doi={10.1090/S0002-9947-02-03132-X},
}

\bib{MR1978409}{article}{
   author={Guan, P.},
   author={Wang, G.},
   title={A fully nonlinear conformal flow on locally conformally flat
   manifolds},
   journal={J. Reine Angew. Math.},
   volume={557},
   date={2003},
   pages={219--238},
   issn={0075-4102},
   review={\MR{1978409 (2004e:53101)}},
   doi={10.1515/crll.2003.033},
}

\bib{MR2373147}{article}{
   author={Gursky, M. J.},
   author={Viaclovsky, J. A.},
   title={Prescribing symmetric functions of the eigenvalues of the Ricci
   tensor},
   journal={Ann. of Math. (2)},
   volume={166},
   date={2007},
   number={2},
   pages={475--531},
   issn={0003-486X},
   review={\MR{2373147 (2008k:53068)}},
   doi={10.4007/annals.2007.166.475},
}

\bib{MR2737708}{article}{
   author={Han, Z.-C.},
   author={Li, Y. Y.},
   author={Teixeira, E. V.},
   title={Asymptotic behavior of solutions to the $\sigma\sb k$-Yamabe
   equation near isolated singularities},
   journal={Invent. Math.},
   volume={182},
   date={2010},
   number={3},
   pages={635--684},
   issn={0020-9910},
   review={\MR{2737708 (2011i:53045)}},
   doi={10.1007/s00222-010-0274-7},
}

\bib{M}{article}{
   author={Jleli, M.},
   title={Constant mean curvature hypersurfaces},
   journal={PhD Thesis, University of Paris 12},
   date={2004},
}

\bib{MR2194146}{article}{
   author={Jleli, M.},
   author={Pacard, F.},
   title={An end-to-end construction for compact constant mean curvature
   surfaces},
   journal={Pacific J. Math.},
   volume={221},
   date={2005},
   number={1},
   pages={81--108},
   issn={0030-8730},
   review={\MR{2194146 (2007a:53011)}},
   doi={10.2140/pjm.2005.221.81},
}

\bib{MR2337310}{article}{
   author={Kaabachi, S.},
   author={Pacard, F.},
   title={Riemann minimal surfaces in higher dimensions},
   journal={J. Inst. Math. Jussieu},
   volume={6},
   date={2007},
   number={4},
   pages={613--637},
   issn={1474-7480},
   review={\MR{2337310 (2008f:53006)}},
   doi={10.1017/S1474748007000060},
}

\bib{MR2477893}{article}{
   author={Khuri, M. A.},
   author={Marques, F. C.},
   author={Schoen, R. M.},
   title={A compactness theorem for the Yamabe problem},
   journal={J. Differential Geom.},
   volume={81},
   date={2009},
   number={1},
   pages={143--196},
   issn={0022-040X},
   review={\MR{2477893 (2010e:53065)}},
}

\bib{MR1666838}{article}{
   author={Korevaar, N.},
   author={Mazzeo, R.},
   author={Pacard, F.},
   author={Schoen, R.},
   title={Refined asymptotics for constant scalar curvature metrics with
   isolated singularities},
   journal={Invent. Math.},
   volume={135},
   date={1999},
   number={2},
   pages={233--272},
   issn={0020-9910},
   review={\MR{1666838 (2001a:35055)}},
   doi={10.1007/s002220050285},
}

\bib{MR888880}{article}{
   author={Lee, J. M.},
   author={Parker, T. H.},
   title={The Yamabe problem},
   journal={Bull. Amer. Math. Soc. (N.S.)},
   volume={17},
   date={1987},
   number={1},
   pages={37--91},
   issn={0273-0979},
   review={\MR{888880 (88f:53001)}},
   doi={10.1090/S0273-0979-1987-15514-5},
}

\bib{MR1988895}{article}{
   author={Li, A.},
   author={Li, Y. Y.},
   title={On some conformally invariant fully nonlinear equations},
   journal={Comm. Pure Appl. Math.},
   volume={56},
   date={2003},
   number={10},
   pages={1416--1464},
   issn={0010-3640},
   review={\MR{1988895 (2004e:35072)}},
   doi={10.1002/cpa.10099},
}

\bib{MR2393072}{article}{
   author={Marques, F. C.},
   title={Isolated singularities of solutions to the Yamabe equation},
   journal={Calc. Var. Partial Differential Equations},
   volume={32},
   date={2008},
   number={3},
   pages={349--371},
   issn={0944-2669},
   review={\MR{2393072 (2010b:35134)}},
   doi={10.1007/s00526-007-0144-3},
}

\bib{MR1712628}{article}{
   author={Mazzeo, R.},
   author={Pacard, F.},
   title={Constant scalar curvature metrics with isolated singularities},
   journal={Duke Math. J.},
   volume={99},
   date={1999},
   number={3},
   pages={353--418},
   issn={0012-7094},
   review={\MR{1712628 (2000g:53035)}},
   doi={10.1215/S0012-7094-99-09913-1},
}

\bib{MR1399537}{article}{
   author={Mazzeo, R.},
   author={Pollack, D.},
   author={Uhlenbeck, K.},
   title={Connected sum constructions for constant scalar curvature metrics},
   journal={Topol. Methods Nonlinear Anal.},
   volume={6},
   date={1995},
   number={2},
   pages={207--233},
   issn={1230-3429},
   review={\MR{1399537 (97e:53076)}},
}

\bib{MN}{article}{
   author={Mazzieri, L.},
   author={Ndiaye, C. B.},
   title={Existence of solutions for the singular $\sigma_k-$Yamabe problem},
   pages={preprint}
}

\bib{MR2900438}{article}{
   author={Mazzieri, L.},
   author={Segatti, A.},
   title={Constant $\sigma\sb k$-curvature metrics with Delaunay type ends},
   journal={Adv. Math.},
   volume={229},
   date={2012},
   number={6},
   pages={3147--3191},
   issn={0001-8708},
   review={\MR{2900438}},
   doi={10.1016/j.aim.2012.02.007},
}

\bib{MR1763040}{book}{
   author={Pacard, F.},
   author={Rivi{\`e}re, T.},
   title={Linear and nonlinear aspects of vortices},
   series={Progress in Nonlinear Differential Equations and their
   Applications, 39},
   note={The Ginzburg-Landau model},
   publisher={Birkh\"auser Boston, Inc., Boston, MA},
   date={2000},
   pages={x+342},
   isbn={0-8176-4133-5},
   review={\MR{1763040 (2001k:35066)}},
   doi={10.1007/978-1-4612-1386-4},
}

\bib{MR788292}{article}{
   author={Schoen, R.},
   title={Conformal deformation of a Riemannian metric to constant scalar
   curvature},
   journal={J. Differential Geom.},
   volume={20},
   date={1984},
   number={2},
   pages={479--495},
   issn={0022-040X},
   review={\MR{788292 (86i:58137)}},
}

\bib{MR2362323}{article}{
   author={Sheng, W.-M.},
   author={Trudinger, N. S.},
   author={Wang, X.-J.},
   title={The Yamabe problem for higher order curvatures},
   journal={J. Differential Geom.},
   volume={77},
   date={2007},
   number={3},
   pages={515--553},
   issn={0022-040X},
   review={\MR{2362323 (2008i:53048)}},
}

\bib{MR2639545}{article}{
   author={Silva Santos, A.},
   title={A construction of constant scalar curvature manifolds with
   Delaunay-type ends},
   journal={Ann. Henri Poincar\'e},
   volume={10},
   date={2010},
   number={8},
   pages={1487--1535},
   issn={1424-0637},
   review={\MR{2639545 (2011f:53063)}},
   doi={10.1007/s00023-010-0024-9},
}

\bib{MR0240748}{article}{
   author={Trudinger, N. S.},
   title={Remarks concerning the conformal deformation of Riemannian
   structures on compact manifolds},
   journal={Ann. Scuola Norm. Sup. Pisa (3)},
   volume={22},
   date={1968},
   pages={265--274},
   review={\MR{0240748 (39 \#2093)}},
}

\bib{MR1738176}{article}{
   author={Viaclovsky, J. A.},
   title={Conformal geometry, contact geometry, and the calculus of
   variations},
   journal={Duke Math. J.},
   volume={101},
   date={2000},
   number={2},
   pages={283--316},
   issn={0012-7094},
   review={\MR{1738176 (2001b:53038)}},
   doi={10.1215/S0012-7094-00-10127-5},
}

\bib{MR0125546}{article}{
   author={Yamabe, H.},
   title={On a deformation of Riemannian structures on compact manifolds},
   journal={Osaka Math. J.},
   volume={12},
   date={1960},
   pages={21--37},
   review={\MR{0125546 (23 \#A2847)}},
}

\end{biblist}
  \end{bibdiv} 
 \end{document}